
\documentclass[letterpaper,reqno,12pt]{amsart} 
\usepackage[margin=1.2in]{geometry}

\usepackage{amssymb, enumerate, color}
\usepackage{mathrsfs}
\usepackage[all]{xy}
\usepackage{hyperref}
\usepackage{enumitem}
\usepackage{graphicx}
\usepackage{subcaption}
\usepackage{here}

\usepackage{tikz}

\numberwithin{equation}{section} 

\theoremstyle{plain}
\newtheorem{thm}{Theorem}[section] 
\newtheorem{cor}[thm]{Corollary}
\newtheorem{prop}[thm]{Proposition}

\newtheorem{lem}[thm]{Lemma}
\newtheorem*{mainthm}{Theorem A}

\theoremstyle{definition} 
\newtheorem{defn}[thm]{Definition}
\newtheorem{lem-defn}[thm]{Lemma-Definition}

\newtheorem{eg}[thm]{Example} 
\newtheorem{question}[thm]{Question}
\newtheorem*{notation}{Notation}
\newtheorem{notation2}[thm]{Notation}

\theoremstyle{remark}
\newtheorem{rem}[thm]{Remark}

\newtheorem*{acknowledgement}{Acknowledgments}

\def\ge{\geqslant}
\def\le{\leqslant}
\def\phi{\varphi}
\def\epsilon{\varepsilon}
\def\tilde{\widetilde}

\def\mapsto{\longmapsto}

\def\onto{\relbar\joinrel\twoheadrightarrow}

\newcommand{\sO}{\mathcal{O}}

\newcommand{\N}{\mathbb{N}}
\newcommand{\Q}{\mathbb{Q}} 
 
\newcommand{\R}{\mathbb{R}} 
\newcommand{\Z}{\mathbb{Z}}
\newcommand{\PP}{\mathbb{P}}

\newcommand{\m}{\mathfrak{m}}
\newcommand{\n}{\mathfrak{n}}

\newcommand{\ch}{\mathrm{ch}}
\newcommand{\Frac}{\mathrm{Frac}}

\newsavebox{\circlebox}
\savebox{\circlebox}{\fontencoding{OMS}\selectfont\Large\char13}
\newlength{\circleboxwdht}

\def\Spec{\operatorname{Spec}}

\def\Pic{\operatorname{Pic}}

\def\Exc{\operatorname{Exc}}

\def\ord{\operatorname{ord}}

\def\id{\operatorname{id}}

\newcommand{\dq}[1]{\lq\lq\textup{#1}\rq\rq}

\title{General hyperplane sections of log canonical threefolds in positive characteristic}

\author{Kenta Sato}
\address{Faculty of Mathematics, Kyushu University, 744 Motooka, Nishi-ku, Fukuoka 819-0395, Japan}
\email{ksato@math.kyushu-u.ac.jp}

\keywords{Bertini theorem, geometrically log canonical singularities, surface singularities, dual graph}

\subjclass[2020]{14B05, 14J17}

\begin{document}

\begin{abstract}
In this paper, we prove that if a $3$-dimensional quasi-projective variety $X$ over an algebraically closed field of characteristic $p>3$ has only log canonical singularities, then so does a general hyperplane section $H$ of $X$.
We also show that the same is true for klt singularities, which is a slight extension of \cite{ST20}.
In the course of the proof, we provide a sufficient condition for log canonical (resp.~klt) surface singularities to be geometrically log canonical (resp.~geometrically klt) over a field.
\end{abstract}

\maketitle
\markboth{K.~SATO}{GENERAL HYPERPLANE SECTIONS OF LOG CANONICAL THREEFOLDS}


\section{Introduction}

The classical Bertini theorem states that if a projective scheme $X$ over an algebraically closed field is smooth, then a general hyperplane section $H$ of $X$ is also smooth.
Many variants of this theorem have been established; for instance, it is known that if $X$ is reduced (resp.~normal, Cohen-Macaulay, Gorenstein), then so is $H$.
Furthermore, in characteristic zero, the argument presented in Reid's paper (\cite{Rei}) implies that certain classes of singularities in the minimal model program possess a similar property. 
Specifically, if $X$ has only log canonical (resp. klt, canonical, or terminal) singularities, then the same property holds for $H$.
In this paper, we consider the case of positive characteristic, and pose the following question:

\begin{question}\label{Bertini question}
Let $k$ be an algebraically closed field of characteristic $p>0$ and $X \subseteq \PP^N_k$ be a subvariety of $\PP^N_k$.
If $X$ has only log canonical (resp.~klt, canonical, terminal) singularities, does a general hyperplane section $H$ have the same properties?
\end{question}

If $X$ is two-dimensional, then the assertion in Question \ref{Bertini question} holds true, as $X$ has only isolated singularities.
Likewise, if $X$ is three-dimensional with terminal singularities, then the question is confirmed for the same reason.
However, the remaining cases, even in dimension three, are more intricate.
Note that Reid's argument is not applicable in positive characteristic due to the Bertini theorem's failure for base point free linear systems.
Recently, an affirmative answer has been provided in \cite{ST20} using jet schemes and $F$-singularities, in the case where $X$ has canonical or klt singularities and is three-dimensional, while requiring $p$ to be greater than $5$ in the klt case.

In this paper, we give an affirmative answer to the log canonical case and klt case in dimension three with $p>3$.

\begin{mainthm}[Corollary \ref{bertini}]
Let $X \subseteq \PP^N_k$ be a three-dimensional normal quasi-projective variety over an algebraically closed field $k$ of characteristic $p>3$.
If $X$ has only log canonical (resp.~klt) singularities, then so does a general hyperplane section $H$ of $X$.
\end{mainthm}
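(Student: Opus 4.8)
The plan is to reduce Theorem A (the statement at hand) to a local question about surface singularities and then apply the geometric log canonicity criterion announced in the abstract. First I would pass to the affine, local situation: the property of having only log canonical (resp.\ klt) singularities is local, so it suffices to show that for a general hyperplane $H$, every point of $H$ is a log canonical (resp.\ klt) point of $H$. Working near a closed point $x\in X$, let $R = \sO_{X,x}$ with maximal ideal $\m$. A general hyperplane section through (or near) $x$ corresponds to a general element of the linear system $|\m|$, equivalently a general element of the finite-dimensional $k$-vector space $\m/\m^2$ after fixing projective coordinates. The classical Bertini theorems guarantee that for general $H$, the section $H$ is again normal and two-dimensional, and that $H$ avoids the isolated bad loci; so the only delicate points are those on the (at most one-dimensional) non-smooth locus of $X$, and there the key input is how the singularity of $X$ behaves when cut down.

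The central idea is to view the generic hyperplane section as a section over a larger, non-closed field. Concretely, if $t_1,\dots,t_r$ are indeterminates parametrizing the linear system, the \emph{generic} hyperplane section $H_{\mathrm{gen}}$ is a scheme over the function field $K = k(t_1,\dots,t_r)$, and a \emph{general} hyperplane section over $k$ is obtained by specializing the $t_i$ to general elements of $k$. A standard spreading-out argument then shows: if $H_{\mathrm{gen}}$ has only log canonical (resp.\ klt) singularities \emph{as a variety over the non-closed field $K$}, i.e.\ if it is \emph{geometrically} log canonical (resp.\ geometrically klt) over $K$, then a general $k$-section is log canonical (resp.\ klt) over $k$. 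Since $H_{\mathrm{gen}}$ is cut out of $X$ by a single \emph{generic} linear form, adjunction-type arguments (e.g.\ that a general member of a free-up-to-the-base-ideal system inherits the singularities of the ambient space) give that $H_{\mathrm{gen}}$ is log canonical (resp.\ klt) in the \emph{naive} sense — the subtlety being entirely that $K$ is not algebraically closed, so one must upgrade ``log canonical'' to ``geometrically log canonical''. This is exactly where the promised sufficient condition for a log canonical (resp.\ klt) surface singularity over a field to be geometrically log canonical (resp.\ geometrically klt) is invoked: one checks that the residue fields and the local algebra of $H_{\mathrm{gen}}$ at its singular points, which are controlled by the dual graph and the structure of the surface singularity inherited from $X$, satisfy that criterion when $p>3$.

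In more detail, the steps in order: (1) localize and reduce to showing geometric log canonicity (resp.\ geometric klt-ness) of the generic hyperplane section $H_{\mathrm{gen}}/K$; (2) use Bertini for normality and irreducibility to ensure $H_{\mathrm{gen}}$ is a normal surface over $K$; (3) use adjunction / inversion-of-adjunction-type estimates to see that $H_{\mathrm{gen}}$ has only log canonical (resp.\ klt) singularities in the usual sense — here the hypothesis that $X$ is log canonical (resp.\ klt) in dimension three enters, and one tracks discrepancies on a log resolution of $X$ restricted to the section; (4) classify, or at least constrain, the resulting surface singularities of $H_{\mathrm{gen}}$ — their dual graphs are sub-configurations of those appearing on a resolution of $X$ — and verify the sufficient condition for geometric log canonicity (resp.\ klt-ness) from the body of the paper, which is where the bound $p>3$ is used to exclude the wild/inseparable pathologies that can make a log canonical surface singularity fail to be geometrically log canonical; (5) spread out and specialize to conclude the statement for a general $k$-hyperplane.

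The main obstacle I expect is step (4): controlling the surface singularities of the generic section well enough to apply the geometric criterion. In equal characteristic zero one would simply invoke that log canonical surface singularities over any field are automatically geometrically log canonical, but in characteristic $p$ this can fail, and the only way out is a hands-on analysis of the possible dual graphs and of the wild ramification that the residue field extension $K/k$ could introduce — precisely the analysis that the characteristic assumption $p>3$ is tailored to handle. A secondary technical point is making the spreading-out in step (5) compatible with the local-to-global passage, i.e.\ ensuring that ``geometrically log canonical at the generic point of the singular locus'' genuinely propagates to ``log canonical at a general closed point'' for \emph{all} the finitely many relevant components of $\Sing X$ simultaneously; this is routine but must be done uniformly.
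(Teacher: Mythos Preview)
Your overall strategy---pass to the generic hyperplane section $\mathcal{X}_\eta$ over $K=K((\PP^N)^*)$, upgrade log canonicity (resp.\ klt-ness) to its geometric form over $K$, and spread out to general closed fibers---is exactly what the paper does. One minor correction: in step~(3) the paper does not use adjunction or restrict a threefold log resolution; it observes that the first projection $\pi\colon\mathcal{X}_\eta\to X$ has \emph{geometrically regular fibers} (this is the second Bertini theorem of \cite{CGM}), and log canonicity and klt-ness ascend along flat maps with such fibers.

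The substantive gap is step~(4) in the log canonical case. You propose to apply Theorem~C directly to $\mathcal{X}_\eta$ over $K$; for klt this is fine, since Theorem~B imposes no hypothesis on $K$ beyond separability of residue fields, which does hold here. But Theorem~C requires either that no non-smooth point be simple elliptic, or that the base field have transcendence degree~$1$ over a perfect field---and $K$ has transcendence degree $N\ge 3$ over $k$, while nothing rules out simple elliptic singularities on $\mathcal{X}_\eta$. The paper does \emph{not} invoke Theorem~C as stated. It uses the finer input Lemma~\ref{criterion2}, which asks only for a log resolution with the $(**)$-condition at each non-smooth point $P\in\mathcal{X}_\eta$, and it builds that resolution on $X$ rather than on $\mathcal{X}_\eta$: setting $x=\pi(P)$, one has $\dim\sO_{X,x}=2$ and $\kappa(x)$ has transcendence degree~$1$ over the perfect field $k$, so Corollary~\ref{* for lc} produces a $(**)$-resolution of $\Spec\sO_{X,x}$; then Lemma~\ref{* for base change} (again using that $\pi$ is flat with geometrically regular fibers) transports it to $\Spec\sO_{\mathcal{X}_\eta,P}$. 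The transcendence-degree-$1$ hypothesis is thus met by the residue field at the generic point of a curve in $\Sing X$, not by $K$, and this detour through the two-dimensional local rings of $X$ is the missing idea in your outline.
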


It is worth noting that the result is novel even in the klt case, as in \cite{ST20}, the case where $p=5$ was beyond the scope of the investigation.

Our approach to prove Theorem A relies on an idea of \cite{CGM}. 
We examine the closed subscheme $\mathcal{X}$ of the product $X \times_k (\PP^N_k)^*$ whose closed fiber over $(\PP^N_k)^*$ at a closed point $H \in (\PP^N_k)^*$ coincides with the hyperplane section $X \cap H$.
To verify that a general closed fiber of $\mathcal{X}$ is log canonical (resp.~klt), we need to show that the generic fiber $\mathcal{X}_{\eta}$ is geometrically log canonical (resp.~geometrically klt) over the function field $K((\PP^N_k)^*)$. (See Proposition \ref{family} for more details).
On the other hand, since the natural map $p : \mathcal{X}_{\eta} \to X$ has geometrically regular fibers, if $X$ is log canonical (resp.~klt), then so is $\mathcal{X}_{\eta}$.
Furthermore, we also know that $\mathcal{X}_{\eta}$ is geometrically normal over $K((\PP^N_k)^*)$ (\cite{CGM}).
This leads to the following question:

\begin{question}
Let $S$ denote a two-dimensional geometrically normal variety over a field $K$ that is not necessarily perfect, and suppose that $S$ has log canonical (resp.~klt) singularities. 
What is a sufficient condition for $S$ to be geometrically log canonical (resp.~geometrically klt) over $K$?
\end{question}

It should be noted that in characteristic $2$ or $3$, there exist instances of geometrically normal surface singularities $(s \in S)$ over a field $K$ with the residue field isomorphic to $K$ that are log canonical (resp.~klt) but not geometrically log canonical (resp.~geometrically klt) over $K$ (Example \ref{counterexample}).
However, in Section \ref{4}, it is observed that such examples do not occur in characteristic larger than $3$.
To be precise, we prove the following theorem.

\begin{mainthm}[Corollary \ref{geometric klt}, Corollary \ref{geometric lc}]
Let $X$ be a $2$-dimensional geometrically normal variety over a field $k$ of characteristic $p>3$ and $x_1, \dots, x_n \in X$ be the non-smooth points of $X$ over $k$.
Suppose that $X$ is log canonical (resp.~klt) at $x_i$ and the residue field $\kappa(x_i)$ of $X$ at $x_i$ is separable over $k$ for every $i$.
Then $X$ is geometrically log canonical (resp.~geometrically klt) over $k$.
\end{mainthm}

By a standard argument, Theorem A follows from Theorem B.

\begin{acknowledgement}
The author wishes to express his gratitude to Professor Shunsuke Takagi for his valuable advice and suggestions.
He is also grateful to Professors Teppei Takamatsu, Shou Yoshikawa and Tatsuro Kawakami for helpful comments.
He also thanks the anonymous referee for the comments and suggestions that helped to improve the paper.
This work was supported by JSPS KAKENHI Grant Number 20K14303.
\end{acknowledgement}

\begin{notation}
Throughout this paper, all rings are assumed to be commutative and with unit element and all schemes are assumed to be Noetherian and separated. 
In this paper, a \dq{variety} over a field $k$ is an integral separated scheme of finite type over $k$. 
\end{notation}

\section{Preliminaries}

This section provides preliminary results needed for the rest of the paper. 

\subsection{Singularities in the MMP}
In this subsection, we recall the definition and basic properties of singularities in minimal model program (or MMP for short). 

Throughout this subsection, unless otherwise stated, $X$ denotes an excellent normal integral scheme with a dualizing complex $\omega_X^\bullet$. 
The \textit{canonical sheaf} $\omega_X$ associated to $\omega_X^{\bullet}$ is the coherent $\sO_X$-module defined as the first nonzero cohomology of $\omega_X^\bullet$.
A \textit{canonical divisor} of $X$ associated to $\omega_X^\bullet$ is any Weil divisor $K_X$ on $X$ such that $\sO_X(K_X) \cong \omega_X$. 

\begin{rem}\label{rem: dualizing}
While there is ambiguity in the choice of a dualizing complex or a canonical divisor, there are often standard choices as follows.
\begin{enumerate}[label=\textup{(\arabic*)}]
\item If $X$ is of finite type over a field $k$ with the structure map $\pi : X \to \Spec k$, then we always set $\omega_X^{\bullet} : = \pi^! (\sO_{\Spec k })$.
By adopting this convention, the following property holds by \cite[Lemma 0EA0]{Sta}:
if $X$ is a geometrically normal (cf.~Definition \ref{def:geom} below) variety over a field $k$, then we have 
\[
\omega_X \cong (\Omega_{X/k}^{\wedge \dim X})^{**},\] 
where $^{**}$ denotes the reflexive hull.
\item We fix a canonical divisor $K_X$ of $X$ associated to $\omega_X^\bullet$, and given a proper birational morphism $\pi:Y \to X$ from a normal integral scheme $Y$, we always choose a canonical divisor $K_Y$ of $Y$ that is associated to $\pi^! \omega_X^{\bullet}$ and coincides with $K_X$ outside the exceptional locus $\mathrm{Exc}(f)$ of $f$. 
\end{enumerate}
\end{rem}

\begin{defn}
A proper birational morphism $f: Y \to X$ from a regular integral scheme $Y$ is said to be a \textit{resolution of singularities} of $X$. 
A resolution $f:Y \to X$ is said to be a \textit{log resolution} if the exceptional locus $\mathrm{Exc}(f)$ of $f$ is a simple normal crossing divisor.  
\end{defn}

First, we give the definition of singularities in the MMP that makes sense in arbitrary characteristic.
\begin{defn}\label{mmp}
Suppose that $X$ is $\Q$-Gorenstein (that is, $K_X$ is $\Q$-Cartier).
\begin{enumerate}[label=(\roman*)]
\item
Given a proper birational morphism $f:Y \to X$ from a normal integral scheme $Y$, we write
\[
K_{Y/X} : = K_Y- f^*(K_X).
\]
For each prime divisor $E$ on $Y$, the \textit{discrepancy} $a_{E}(X)$ of $X$ at $E$ is defined as
\[
a_{E} (X) : = \ord_E( K_{Y/X}). 
\]
\item
We say that $X$ is \textit{log canonical} (resp.~\textit{klt}) at a point $x \in X$ if $a_E(\Spec \sO_{X,x}) \ge -1$ (resp.~$ >-1$) for every proper birational morphism $f:Y \to \Spec \sO_{X,x}$ from a normal integral scheme $Y$ and for every prime divisor $E$ on $Y$.
We say that $X$ is \textit{log canonical} (resp.~\textit{klt}) if it is log canonical (resp.~klt) for every $x \in X$.
\end{enumerate}
\end{defn}

\begin{rem}\label{rmk on discrepancy}
Suppose that $X$ is $\Q$-Gorenstein and there exists a log resolution $f: Y \to X$.
Then for a point $x \in X$, the following are equivalent
\begin{itemize}
\item $X$ is log canonical (resp.~klt) at $x$.
\item $a_E(X) \ge -1$ (resp.~$>-1$) for all prime divisor $E$ in $Y$ whose center $f(E)$ contains $x$.
\item There exists an open neighborhood $U \subseteq X$ of $x$ such that $U$ is log canonical (resp.~klt).
\end{itemize}
\end{rem}

Given a property $\mathcal{P}$ of local rings, we say that a scheme $X$ satisfies $\mathcal{P}$ at $x \in X$ if the stalk $\sO_{X,x}$ satisfy $\mathcal{P}$.

\begin{defn}\label{def:geom}
Let $X$ be a scheme over a field $k$ and $x \in X$ be a point.
We say that $X$ is \emph{geometrically $\mathcal{P}$} at $x$ if for every finite field extension $\ell$ of $k$, the base change $X_{\ell} : = X \times_{k} \ell$ satisfies $\mathcal{P}$ at any point $y \in X_{\ell}$ lying over $x$.
\end{defn}

\begin{rem}\label{rmk on geometrically}
Let $X$ be a scheme of finite type over a field $k$ and we set $\mathcal{P} : = $\dq{regular} (resp.~\dq{$(R_n)$} for some fixed $n$, \dq{normal}, \dq{reduced}, \dq{irreducible}, \dq{integral}).
It is known that the following are equivalent:
\begin{enumerate}[label=\textup{(\alph*)}]
\item $X$ is geometrically $\mathcal{P}$ over $k$.
\item $X \times_k \ell$ satisfies $\mathcal{P}$ for every finitely generated field extension $\ell$ of $k$.
\item $X \times_k \ell$ satisfies $\mathcal{P}$ for every field extension $\ell$ of $k$.
\item $X \times_k \overline{k}$ satisfies $\mathcal{P}$, where $\overline{k}$ is the algebraic closure of $k$.
\end{enumerate}
We refer to \cite[Lemma 037K]{Sta} when $\mathcal{P}=$\dq{irreducible}, and to \cite[Lemma 038U, 038X and 02V4]{Sta} when $\mathcal{P}=$\dq{$(R_n)$}.
\end{rem}

\begin{defn}
Let $(Y, D)$ be a pair of a Noetherian integral scheme $Y$ over a field $k$ and a reduced divisor $D$ on $Y$.
We say that $(Y,D)$ is \emph{geometrically SNC} over $k$ if for every finite field extension $\ell$ of $k$, the pair $(Y_{\ell}, D_{\ell})$ of the base change $Y_{\ell} : = Y \times_{k} \ell$ and the flat pullback $D_{\ell}$ of $D$ is a SNC pair.
\end{defn}

\begin{lem}\label{criterion1}
Let $X$ be a variety over a field $k$ and $f: Y \to X$ be a log resolution of $X$ such that the pair $(Y , \Exc(f))$ is geometrically SNC over $k$.
If $X$ is $\Q$-Gorenstein and klt (resp.~log canonical), then $X$ is geometrically klt (resp.~geometrically log canonical) over $k$.
\end{lem}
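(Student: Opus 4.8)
The plan is to reduce the geometrically-klt (resp.~geometrically-lc) condition to checking discrepancies after base change to the algebraic closure, using Remark \ref{rmk on geometrically}, and then to transport the given log resolution $f: Y \to X$ along the base change to produce a log resolution of $X_{\bar k}$ whose discrepancies we can compute. Concretely: by definition of ``geometrically klt/lc'' it suffices, for every finite extension $\ell/k$, to bound below the discrepancies of $X_\ell$ at all its points; and by standard descent (Remark \ref{rmk on geometrically}, plus the fact that discrepancies can only be computed over the algebraic closure or can be checked on a log resolution) it is enough to treat $X_{\bar k}$. So first I would base change the whole picture by $\bar k$: set $Y_{\bar k} = Y \times_k \bar k$, $X_{\bar k} = X \times_k \bar k$, and $f_{\bar k}: Y_{\bar k} \to X_{\bar k}$.

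Next I would verify that $f_{\bar k}$ is again a log resolution of $X_{\bar k}$. Here is where the two hypotheses are used: since $X$ is $\Q$-Gorenstein and klt (resp.~lc) it is in particular normal, and normality of $X$ is a geometric property only if $X$ is geometrically normal — but that is automatic here because $X$ being klt/lc forces $X_{\bar k}$ to be normal once we know $Y_{\bar k}$ is regular and $f_{\bar k}$ is proper birational; more cleanly, $Y$ regular over $k$ together with the relatively-SNC hypothesis forces $Y_{\bar k}$ to be regular, hence $X_{\bar k}$, being dominated by a regular scheme via a proper birational map, is normal (it is $R_1$ and $S_2$). The relatively-SNC hypothesis on $(Y,\Exc(f))$ is exactly what guarantees that $(Y_{\bar k}, \Exc(f)_{\bar k})$ is an SNC pair and that $\Exc(f_{\bar k})$ equals $\Exc(f)_{\bar k}$ as a divisor with its reduced structure; in particular $f_{\bar k}$ is a log resolution. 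One also checks $f_{\bar k}$ is birational (flat base change of a proper birational map between integral schemes whose generic fibre is a point), and proper.

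Then I would compare canonical divisors and discrepancies under the base change. Since $f$ is an isomorphism over the smooth locus of $X$ and flat base change commutes with forming $\omega$, we get $K_{X_{\bar k}} = (K_X)_{\bar k}$ and $K_{Y_{\bar k}} = (K_Y)_{\bar k}$ as Weil divisors (both are relative canonical divisors for a smooth-in-codimension-one situation, and the pullback of a $\Q$-Cartier divisor commutes with flat base change), so that
\[
K_{Y_{\bar k}/X_{\bar k}} \;=\; (K_{Y/X})_{\bar k},
\]
and hence for each prime divisor $E_i$ of $Y$ and each prime component $E_{ij}$ of its pullback to $Y_{\bar k}$ we have $a_{E_{ij}}(X_{\bar k}) = a_{E_i}(X)$. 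Because $(Y_{\bar k}, \Exc(f_{\bar k}))$ is SNC, Remark \ref{rmk on discrepancy} applies to $X_{\bar k}$: it is klt (resp.~lc) if and only if every prime exceptional divisor of $f_{\bar k}$ has discrepancy $>-1$ (resp.~$\ge -1$), and we have just shown each such discrepancy equals one of the (by hypothesis $>-1$, resp.~$\ge -1$) discrepancies $a_{E_i}(X)$. Therefore $X_{\bar k}$ is klt (resp.~lc), and by Remark \ref{rmk on geometrically} applied to $\mathcal P = $``normal'' (to handle the finite extensions $\ell$) together with the fact that klt/lc is detected on the geometric fibre, $X$ is geometrically klt (resp.~geometrically log canonical) over $k$.

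The main obstacle is the bookkeeping around $\Q$-Gorensteinness and canonical divisors under base change: one must check that $X_{\bar k}$ is still $\Q$-Gorenstein with $K_{X_{\bar k}}$ the flat pullback of $K_X$ (so that pulling back $K_{X_{\bar k}}$ along $f_{\bar k}$ is the base change of pulling back $K_X$ along $f$), and that the identification of exceptional prime divisors of $f_{\bar k}$ with components of base changes of exceptional primes of $f$ is exhaustive — this is precisely what the relatively-SNC hypothesis buys us, since without it $Y_{\bar k}$ could fail to be regular or $\Exc(f_{\bar k})$ could fail to be a divisor, and the discrepancy comparison would break down. Everything else (flat base change for $\omega$, descent of normality via Remark \ref{rmk on geometrically}, the reduction to $\bar k$) is standard.
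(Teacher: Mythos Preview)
Your approach is essentially the same as the paper's: base change the log resolution, verify it remains a log resolution, and observe that the relative canonical divisor pulls back so that discrepancies are preserved. Two points are worth tightening.

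First, the paper works directly with an arbitrary finite extension $\ell/k$ rather than passing to $\bar k$. This matches the definition of ``geometrically klt/lc'' on the nose and avoids the final step where you invoke Remark~\ref{rmk on geometrically} for klt/lc---that remark only covers properties like normal, reduced, $(R_n)$, and does not literally apply to klt/lc. Your argument runs verbatim with $\bar k$ replaced by $\ell$ (the relatively-SNC hypothesis is stated for finite extensions), so you may as well do that and drop the reduction.

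Second, your justification for normality of $X_{\bar k}$ is not quite right: being dominated by a regular scheme via a proper birational map does \emph{not} imply normality (think of the normalization of a nodal curve). The paper's argument is that $(S_2)$ passes along the flat base change $X_\ell \to X$, while $(R_1)$ holds because $X$ is smooth over $k$ wherever $f$ is an isomorphism (since $Y$ is smooth over $k$), and the complement has codimension $\ge 2$. The paper also notes explicitly, via compatibility of dualizing complexes with base change, that $\omega_{X_\ell}$ is the pullback of $\omega_X$, whence $X_\ell$ is $\Q$-Gorenstein; and that each exceptional prime $F$ on $Y$, being smooth over $k$, pulls back with multiplicity one---this is the reason your equality $a_{E_{ij}}(X_\ell)=a_{E_i}(X)$ holds, and it is worth saying so.
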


\begin{proof}
Let $\ell $ be a finite field extension of $k$.
We first show that $X_{\ell}$ is normal and $\Q$-Gorenstein.
Since $\ell$ is flat over $k$, if $X$ satisfies the Serre condition $(S_m)$ for some $m >0$, then so is the base change $X_{\ell} : = X \times_k \ell$.
On the other hand, since $Y$ is smooth over $k$, so is $X$ around the locus where $f$ is isomorphic.
Therefore, $X_{\ell}$ satisfies the $(R_1)$-condition, too.
We also note by \cite[Lemma 0E9U]{Sta} and Remark \ref{rem: dualizing} (1) that one has $\nu^* \omega_{X} \cong \omega_{X_{\ell}}$, here $\nu: X_{\ell} \to X$ is the natural morphism.
Therefore, $X_{\ell}$ is $\Q$-Gorenstein.

In order to show that $X_{\ell}$ is klt (resp.~log canonical), we consider the following Cartesian diagram:
\[
\xymatrix{
Y_{\ell} \ar^-{f_\ell}[r] \ar^-{\mu}[d] & X_{\ell} \ar^-{\nu}[d] \ar[r] & \Spec (\ell) \ar[d] \\
Y \ar^-{f}[r] & X \ar[r] & \Spec (k)
}\]
We note that $f_{\ell}$ is a log resolution of $X_{\ell}$ by assumption.
By Remark \ref{rmk on discrepancy}, it is enough to show that $a_E(X_{\ell})>-1$ (resp.~$\ge -1$) for every exceptional prime divisor $E \subseteq Y_{\ell}$.
Let $F : = \mu(E) \subseteq Y$ be the image of $E$ by the finite morphism $\mu$.
Since $F$ is an exceptional prime divisor over $X$, it follows from the assumption that $F$ is smooth over $k$.
Therefore, the coefficient of $\mu^*F$ at $E$ is one.
Combining this with the equation
\[
K_{Y_{\ell}/X_{\ell}} = \mu^*(K_{Y/X}),
\]
we conclude that $a_F(X_{\ell}) = a_E(X) > -1$ (resp.~$\ge -1$), as desired.
\end{proof}

\subsection{Curves}

In this paper, a curve is a (not necessarily regular) integral projective scheme $C$ over a field $k$ with $\dim C=1$.

For a curve $C$ over $k$, the \emph{arithmetic genus} $g(C)$ is defined by
\[
g(C) : = \frac{ \dim_k(H^1(C, \sO_C))}{\dim_k(H^0(C, \sO_C))} \in \N_{\ge 0}.
\]
For a coherent sheaf $\mathcal{F}$ on $C$, we write
\[
\chi(C/k, \mathcal{F}) : = \dim_k (H^0(C, \mathcal{F})) -\dim_k (H^1(C, \mathcal{F}))
\]
For a Weil divisor $D = \sum_{i=1}^n a_i P_i$ on $C$, we define the \emph{degree} of $D$ over $k$ by
\[
\deg_{C/k}(D) : = \sum_{i=1}^n a_i \dim_k ( \kappa(P_i)) \in \Z,
\]
where $\kappa(P_i)$ is the residue field of $C$ at $P_i$.
This defines the homomorphism
\[
\deg_{C/k} : \Pic(C) \to \Z.
\]
See \cite[Subsection 1.4]{Ful} for more details.

We recall the Riemann-Roch theorem on a curve.
\begin{lem}\label{RR}
Let $C$ be a (not necessarily regular) curve over a field $k$ and $L$ be an invertible sheaf on $C$.
Then the following hold.
\begin{enumerate}[label=$(\arabic*)$]
\item $\chi(C/k, L) = \deg_{C/k}( L ) + \chi(C/k, \sO_C)$.
\item If we have $H^0(C, \sO_C)=k$ and $C$ is Gorenstein, then we have 
\[
\deg_{C/k}(\omega_C) = 2 g(C) -2
\]
\end{enumerate}
\end{lem}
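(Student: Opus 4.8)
For part (1), the plan is to reduce to effective Cartier divisors and then argue by a d\'evissage. First I would record the basic exact-sequence computation: if $D \subset C$ is an effective Cartier divisor (a closed subscheme with invertible ideal sheaf; being a proper subscheme of a curve it is zero-dimensional and finite over $k$) and $M$ is any invertible sheaf on $C$, then from $0 \to M(-D) \to M \to M|_D \to 0$ and the vanishing of $H^1$ of the point-supported sheaf $M|_D$ one gets $\chi(C/k, M) - \chi(C/k, M(-D)) = \dim_k H^0(D, M|_D)$. Since an invertible module over an Artinian ring is free of rank one, $\dim_k H^0(D, M|_D) = \dim_k H^0(D, \sO_D) = \sum_{P \in D} \dim_k\bigl(\sO_{C,P}/I_{D,P}\bigr)$, and because a finite-length $\sO_{C,P}$-module has all its composition factors isomorphic to $\kappa(P)$, this equals $\sum_P \mathrm{length}_{\sO_{C,P}}(\sO_{C,P}/I_{D,P}) \cdot \dim_k\kappa(P)$. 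As the Weil divisor attached to $D$ has coefficient $\mathrm{length}_{\sO_{C,P}}(\sO_{C,P}/I_{D,P})$ at $P$, this last sum is exactly $\deg_{C/k}(D)$. (This is the only place where one must be careful that $C$ need not be regular: the coefficient of the associated Weil divisor is a length, not the value of a valuation.)

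Next, for a general invertible sheaf $L$: since $C$ is projective, fix a very ample $\sO_C(1)$ together with a hyperplane section, giving an effective Cartier divisor $H$ with $\sO_C(H) \cong \sO_C(1)$; for $n \gg 0$ the sheaf $L \otimes \sO_C(n)$ is globally generated by Serre's theorem, hence (being a nonzero line bundle) has a nonzero global section, whose zero scheme is an effective Cartier divisor $D_1$ with $L \otimes \sO_C(n) \cong \sO_C(D_1)$. With $D_2 := nH$ we then have $L \cong \sO_C(D_1) \otimes \sO_C(D_2)^{-1}$. Applying the displayed formula to the pairs $(\sO_C(D_1), D_2)$ and $(\sO_C(D_1), D_1)$ and subtracting gives
\[
\chi(C/k, L) = \chi(C/k, \sO_C) + \deg_{C/k}(D_1) - \deg_{C/k}(D_2) = \chi(C/k, \sO_C) + \deg_{C/k}(L),
\]
the last equality because $\deg_{C/k}$ is a homomorphism on $\Pic(C)$ and $L \cong \sO_C(D_1 - D_2)$.

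For part (2), I would use Serre duality. Every curve is Cohen--Macaulay (its local rings are one-dimensional local domains, hence of depth one), so it has a dualizing sheaf $\omega_C$, and the Gorenstein hypothesis is precisely what guarantees that $\omega_C$ is invertible, so that $\deg_{C/k}(\omega_C)$ makes sense. Serre duality gives $H^i(C, \mathcal{F})^\vee \cong \mathrm{Ext}^{1-i}_{\sO_C}(\mathcal{F}, \omega_C)$, so with $\mathcal{F} = \sO_C$ we obtain $\dim_k H^1(C, \omega_C) = \dim_k H^0(C, \sO_C)$ and $\dim_k H^0(C, \omega_C) = \dim_k H^1(C, \sO_C)$, hence $\chi(C/k, \omega_C) = -\chi(C/k, \sO_C)$. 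Combining this with part (1) applied to $L = \omega_C$, namely $\chi(C/k, \omega_C) = \deg_{C/k}(\omega_C) + \chi(C/k, \sO_C)$, yields $\deg_{C/k}(\omega_C) = -2\chi(C/k, \sO_C) = 2\dim_k H^1(C, \sO_C) - 2\dim_k H^0(C, \sO_C)$. Under the hypothesis $H^0(C, \sO_C) = k$ this equals $2\dim_k H^1(C, \sO_C) - 2 = 2g(C) - 2$ by the definition of the arithmetic genus.

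The main obstacle is the bookkeeping in part (1): identifying $\dim_k H^0(D, \sO_D)$ with $\deg_{C/k}(D)$ without assuming $C$ regular, which forces the use of $\mathrm{length}_{\sO_{C,P}}$ rather than an order function, together with justifying the reduction of an arbitrary $L$ to a difference of effective Cartier divisors. Part (2) is then essentially formal, once one observes that the Gorenstein hypothesis makes $\omega_C$ a genuine line bundle, so that Serre duality and the degree formula of part (1) can be combined.
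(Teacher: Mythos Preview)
Your argument is correct. For part (2) it is essentially identical to the paper's: both invoke Serre duality to obtain $\chi(C/k,\omega_C) = -\chi(C/k,\sO_C)$ and then feed this into part (1). For part (1) the paper simply cites \cite[Example 18.3.4]{Ful}, whereas you unpack a self-contained d\'evissage (write $L$ as $\sO_C(D_1 - D_2)$ with $D_i$ effective Cartier and compute via the short exact sequence for an effective Cartier divisor); this is exactly the content behind Fulton's statement, so the route is the same in substance, only more explicit. Your care in identifying $\dim_k H^0(D,\sO_D)$ with $\deg_{C/k}(D)$ via lengths rather than valuations is the correct way to handle the non-regular case and matches Fulton's conventions for the cycle map from Cartier to Weil divisors.
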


\begin{proof}
The assertion in (1) follows from \cite[Example 18.3.4]{Ful}.
For (2), it follows from Serre duality that 
\[
\chi(C/k, \omega_C) = -\chi(C/k, \sO_C) =g(C) -1
\]
Then the assertion follows from (1).
\end{proof}

\subsection{Surface Singularities and dual graphs}

\begin{defn}
We say that $(x \in X)$ is a \emph{normal surface singularity} if $X=\Spec R$ is an affine scheme with a dualizing complex where $R$ is a two-dimensional excellent normal local ring and if $x$ is the unique closed point of $X$. 
\end{defn}

First we recall the definition and basic properties of intersection numbers.
Let $(x \in X)$ be a normal surface singularity and $f :Y \to X$ be a proper birational morphism from a normal integral scheme $Y$ with the exceptional divisor $\Exc(f)=\sum_i^n E_i$. 
For a Cartier divisor $D$ on $Y$ and an exceptional Weil divisor $Z= \sum_{i} a_i E_i$ on $Y$, we define the intersection number $(D \cdot Z)$ as follows: 
\begin{eqnarray*}
  (D \cdot Z)= \sum_{i} a_i \deg_{E_i/\kappa(x)}(\sO_Y(D) |_{E_i}),
\end{eqnarray*}
where $\kappa(x)$ is the residue field of $X$ at $x$.
We say that $D$ is \emph{$f$-nef} if $D \cdot E_i \ge 0$ for all $i$.

\begin{defn}
A resolution $f: Y \to X$ of a normal surface singularity $(x \in X)$ is called \emph{minimal} if a canonical divisor $K_Y$ is $f$-nef.
\end{defn}

\begin{rem}
\
\begin{enumerate}[label=$($\roman*$)$]
\item A minimal resolution of a normal surface singularity exists by \cite[Theorem 2.25]{Kol}.
\item Let $f: Y \to X$ and $f': Y' \to X$ be minimal resolutions and $\mu: W \to X$ be a resolution which factors through both $Y$ and $Y'$.
Then it follows from \cite[Theorem 3.52 (2)]{KM}, combining with the rigidity lemma (c.f. \cite[Lemma 1.15]{Deb}), that $Y$ is isomorphic to $Y'$, that is, a minimal resolution is unique up to isomorphism.
\end{enumerate}
\end{rem}

In this paper, a \emph{graph} is a $\Z^3$-weighted undirected multigraph.

\begin{defn}
Let $(x \in X)$ be a normal surface singularity, $f: Y \to X$ be the minimal resolution with the exceptional divisor $\Exc(f) = \sum_{i=1}^n E_i$.
\begin{enumerate}
\item A \emph{dual graph} of $(x \in X)$ is a graph whose set of vertexes is $\{E_1, \dots, E_n\}$, the number of edges between $E_i$ and $E_j$ is $(E_i \cdot E_j) \in \N$ and the weight at $E_i$ is $(\dim_{\kappa(x)} H^0(E_i, \sO_{E_i}), g(E_i), (E_i^2)) \in \Z^3$.
\item We define the \emph{parameter} $r$ of the dual graph of $(x \in X)$ by
\[
r : = \min_{i} \dim_{\kappa(x)}(H^0(E_i, \sO_{E_i})) \in \N_{\ge 1}.
\]
\end{enumerate}
\end{defn}

\begin{rem}\label{rmk on dual graph}
With the above notation, the intersection numbers $(E_i^2)$ and $(E_i \cdot E_j)$ are divisible by $r_i : = \dim_{\kappa(x)}(H^0(E_i, \sO_{E_i}))$.
Moreover, since $f$ is minimal, we have the inequality $(E_i ^2) \le -2 r_i$.
\end{rem}

\begin{notation2}\label{notation}
Let $f: Y \to X$ be the minimal resolution of a normal surface singularity $(x \in X)$ with the exceptional divisor $\Exc(f) = \sum_{i=1}^n E_i$.
We draw the dual graph of $(x \in X)$ as follows:
\begin{itemize}
\item For each $i$, we represent the vertex $E_i$ with a circle:
\[
\begin{tikzpicture}
\node[draw, shape=circle, inner sep=4.5pt] at (0,0){};
\end{tikzpicture}
\]
\item Within each circle corresponding to $E_i$, we write the number $-a_i : = (E_i^2)$, the number $g_i: = g(E_i)$ below the circle, and the number $r_i: = \dim_{\kappa(x)}(H^0(E_i , \sO_{E_i}))$ above the circle. 
\[
\begin{tikzpicture}
\node[draw, shape=circle, inner sep=0.3pt] at (0,0){{\tiny $-a_i$}};
\draw[shift={(0,0.4)}] (0,0) node{\tiny $r_i$};
\draw[shift={(0,-0.4)}] (0,0) node{\tiny $g_i$};
\end{tikzpicture}
\]
If one has $g_i=0$ (resp.~$r_i=1$, $a_i=-2r_i$), then we omit writing $g_i$ (resp.~$r_i, a_i$).

\item If the intersection number $(E_i^2)$ is not determined, we denote it by
\begin{tikzpicture}
\node[draw, shape=circle, inner sep=1.8pt] at (0,0){{$*$}};
\end{tikzpicture}.
In other words, the symbol \dq{$*$} can take any integer divided by $r_i$ and smaller than or equal to $-2r_i$ (c.f. Remark \ref{rmk on dual graph}).
\item For each $i \neq j$, we draw $e: = (E_i \cdot E_j)$-parallel lines between the circles corresponding to $E_i$ and $E_j$.

\vspace*{0.2cm}
\[
\begin{tikzpicture}
\node[draw, shape=circle, inner sep=4.5pt] (A) at (0,0){};
\node[draw, shape=circle, inner sep=4.5pt] (B) at (1,0){};
\draw (A) --(B);
\draw (0.5,-0.4) node{\tiny (If $e=1$)};

\node[draw, shape=circle, inner sep=4.5pt] (C) at (2,0){};
\node[draw, shape=circle, inner sep=4.5pt] (D) at (3,0){};
\draw (C.10)--(D.170);
\draw (C.-10)--(D.190);
\draw[shift={(2,0)}] (0.5,-0.4) node{\tiny (If $e=2$)};

\node[draw, shape=circle, inner sep=4.5pt] (E) at (4,0){};
\node[draw, shape=circle, inner sep=4.5pt] (F) at (5,0){};
\draw (E) --(F);
\draw (E.10)--(F.170);
\draw (E.-10)--(F.190);
\draw[shift={(4,0)}] (0.5,-0.4) node{\tiny (If $e=3$)};
\end{tikzpicture}
\]
We also draw as 
\vspace*{0.2cm}
\[
\begin{tikzpicture}
\node[draw, shape=circle, inner sep=4.5pt] (A) at (0,0){};
\node[draw, shape=circle, inner sep=4.5pt] (B) at (1,0){};
\draw (A) --node[auto=left]{\tiny $\langle e \rangle$}(B);
\end{tikzpicture}
\]
if $e \ge 4$ or if $e$ is not an explicit number.
\end{itemize}
\end{notation2}

\section{Lemmata on curves}

In this section, we prove some results about a curve over a (not necessarily algebraically closed) field with arithmetic genus zero or one, which we will need in Section \ref{4}.
Some of the results may be known to experts (even if it is not defined over an algebraically closed
field), but we include their proofs here for convenience.

\subsection{Smoothness of geometrically integral curves}

\begin{lem}\label{genus zero}
Let $C$ be a curve over a field $k$ with $g(C)=0$.
We assume that one of the following holds:
\begin{enumerate}[label=\textup{(\roman*)}]
\item $C$ is geometrically integral over $k$, or 
\item $C$ is Gorenstein, $K : = H^0(C, \sO_C)$ is separable over $k$ and there exists an invertible sheaf $L$ on $C$ such that the integer $\deg_{C/k}(L)/ \dim_k(K) $ is an odd number.
\end{enumerate}
Then $C$ is smooth over $k$.
\end{lem}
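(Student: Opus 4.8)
The plan is to reduce both hypotheses to the statement that an integral projective curve over an algebraically closed field with arithmetic genus $0$ is regular, and for (ii) to use the parity assumption to identify $C$ with $\PP^1_k$ directly. Write $K := H^0(C,\sO_C)$, a finite field extension of $k$ since $C$ is integral and projective over $k$. Under hypothesis (i), $C_{\overline k}$ is integral by definition, so $H^0(C_{\overline k},\sO) = K \otimes_k \overline k$ is a domain, which forces $K = k$ as $\overline k$ is algebraically closed. Flat base change then gives $H^1(C_{\overline k},\sO) = H^1(C,\sO_C) \otimes_k \overline k = 0$, the vanishing because $g(C) = 0$ and $\dim_k H^0(C,\sO_C) = 1$. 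Thus $C_{\overline k}$ is an integral projective curve over $\overline k$ with $p_a = 0$; feeding its normalization sequence $0 \to \sO_{C_{\overline k}} \to \pi_*\sO_{\widetilde C} \to \mathcal Q \to 0$ into cohomology, the inclusion $H^0(\sO_{C_{\overline k}}) \hookrightarrow H^0(\pi_*\sO_{\widetilde C})$ is an isomorphism of one-dimensional $\overline k$-vector spaces, so $H^0(\mathcal Q)$ embeds into $H^1(\sO_{C_{\overline k}}) = 0$. Hence $\mathcal Q = 0$, $C_{\overline k}$ is normal and therefore regular, so $C$ is geometrically regular over $k$, i.e.\ smooth over $k$.

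For hypothesis (ii) I first reduce to the case $K = k$. Since $K/k$ is finite separable, $K \otimes_k \overline k \cong \overline k^{[K:k]}$, so $C_{\overline k}$ is a disjoint union of copies of $C \otimes_K \overline k$; hence $C$ is smooth over $k$ if and only if it is smooth over $K$, while $\deg_{C/K}(L) = \deg_{C/k}(L)/\dim_k(K)$ remains odd and $g(C)$ remains $0$ (both cohomology dimensions being divided by $[K:k]$ when $C$ is viewed over $K$). So assume $H^0(C,\sO_C) = k$. As $C$ is Gorenstein, $\omega_C$ is invertible, and Lemma~\ref{RR}(2) gives $\deg_{C/k}(\omega_C) = -2$; therefore $N := L \otimes \omega_C^{\otimes (d-1)/2}$, with $d := \deg_{C/k}(L)$ odd, is an invertible sheaf of degree $\deg_{C/k}(N) = 1$. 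By Lemma~\ref{RR}(1), $\chi(C/k,N) = 1 + \chi(C/k,\sO_C) = 2$, and since $\deg_{C/k}(\omega_C \otimes N^{-1}) = -3 < 0$ while a negative-degree invertible sheaf on an integral projective curve has no global sections, Serre duality gives $H^1(C,N) = 0$; hence $h^0(C,N) = 2$.

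Next, a nonzero section of $N$ cuts out an effective Cartier divisor $D$ with $\sO_C(D) \cong N$ and $\deg_{C/k}(D) = 1$; expanding $\deg_{C/k}(D)$ as a sum of terms $a_i \dim_k\kappa(P_i)$ with $a_i \ge 1$ forces $D = P$ for a single closed point with $\kappa(P) = k$. Since $D = P$ is Cartier, $\mathcal I_P = \sO_C(-P)$, so $\sO_{C,P}$ is a discrete valuation ring and $C$ is regular, hence smooth, at the rational point $P$. The pencil $|N|$ is base point free: the chosen section vanishes only along $P$, while $N \otimes \mathcal I_P \cong \sO_C$ has $h^0 = 1 < h^0(N)$, so the evaluation map at $P$ is nonzero. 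The resulting morphism $\phi : C \to \PP(H^0(C,N)^\vee) = \PP^1_k$ has $\phi^*\sO(1) \cong N$ of degree $1$, hence is non-constant; a non-constant morphism of integral projective curves is finite and surjective, and $\deg\phi \cdot \deg_{\PP^1_k/k}(\sO(1)) = \deg_{C/k}(N) = 1$ gives $\deg\phi = 1$. A finite birational morphism onto the normal scheme $\PP^1_k$ is an isomorphism, so $C \cong \PP^1_k$ is smooth over $k$.

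The routine parts are the Riemann--Roch and Serre duality computations on a possibly singular integral curve, together with the short normalization argument over $\overline k$. The step with real content is where the parity assumption enters: since $\deg_{C/k}(\omega_C) = -2$ is even, the given odd-degree invertible sheaf can be adjusted by a power of $\omega_C$ to have degree exactly $1$, and degree $1$ is precisely what produces a genuine smooth $k$-rational point and, via its linear pencil, an isomorphism $C \cong \PP^1_k$. I expect the main subtlety to be the bookkeeping around the auxiliary base field $K = H^0(C,\sO_C)$: checking that smoothness, the parity of the degree, and the arithmetic genus all transform correctly under the base change from $k$ to $K$ and under extension to $\overline k$.
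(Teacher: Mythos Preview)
Your proof is correct and follows the same strategy as the paper: for (i), pass to $\overline{k}$ and use that an integral projective curve of arithmetic genus zero over an algebraically closed field is $\PP^1$; for (ii), replace $k$ by $K$ and show $C \cong \PP^1_K$. The only difference is that where the paper outsources the second step to \cite[Lemma~10.6]{Kol}, you supply the argument directly---twisting $L$ by a power of $\omega_C$ to obtain a degree-one line bundle, then using Riemann--Roch and the resulting pencil to produce an isomorphism with $\PP^1_k$---which makes your write-up more self-contained but not genuinely different in route.
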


\begin{proof}
We first assume (ii).
Noting that $C$ has a $K$-scheme structure and $\deg_{C/K}(L) = \deg_{C/k}(L)/\dim_k (K)$, it follows from \cite[Lemma 10.6 (2), (3)]{Kol} that $C$ is isomorphic to $\PP^1_{K}$, which is smooth over $K$.
Combining this with the separability of $K$ over $k$, the assertion holds.

We next assume (i).
Let $C' : = C \times_{\Spec (k)} \Spec (\overline{k})$ be the base change of $C$ to the algebraic closure $\overline{k}$ of $k$.
By the flat base change theorem, we have $H^1(C', \sO_{C'})=0$.
Therefore, $C'$ is a curve over $\overline{k}$ with arithmetic genus zero, that is, $C'$ is isomorphic to $\PP^1_{\overline{k}}$.
Since $C'=C \times_k \overline{k}$ is smooth over $\overline{k}$, it follows from \cite[Lemma 02V4]{Sta} that $C$ is smooth over $k$.
\end{proof}


We next consider the singularities of curves with arithmetic genus one.
It follows from Tate's genus change formula (\cite{Tate}, cf.~\cite{Schr}, see also \cite{PW} and \cite{JW}) that a regular geometrically integral curve $C$ of genus one is smooth if the characteristic is larger than $3$.
In the next proposition, we also discuss the case where C is not regular.

\begin{prop}\label{genus one}
Let $C$ be a geometrically integral curve over a field $k$ with $g(C)=1$ and the characteristic $\ch(k)$ of $k$ larger than $3$.
Then one of the following holds:
\begin{enumerate}[label=$(\arabic*)$]
\item $C$ is smooth over $k$, or 
\item there exists a non-regular point $Q \in C$ such that $\kappa(Q) \cong k$ and $C \setminus \{Q\}$ is smooth over $k$.
\end{enumerate}
\end{prop}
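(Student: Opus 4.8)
The plan is to base change to $\overline{k}$, invoke the classification of integral genus‑one curves there, and then reduce to a local computation at the unique singular point. We may assume $C$ is not smooth over $k$ (otherwise we are in case $(1)$) and put $\overline{C} := C\times_k\overline{k}$. Then $\overline{C}$ is an integral projective curve with $g(\overline{C})=g(C)=1$, and it is non‑regular, since smoothness of a finite‑type $k$‑scheme is detected after base change to $\overline{k}$. Taking Euler characteristics in the normalization sequence $0\to\sO_{\overline C}\to n_*\sO_N\to\mathcal F\to0$ for the normalization $n\colon N\to\overline C$ gives $1=g(\overline C)=g(N)+\sum_y\delta_y$, so $N=\PP^1_{\overline k}$ and $\overline C$ has a single singular point $y_0$ with $\delta_{y_0}=1$; hence $\widehat{\sO}_{\overline C,y_0}$ is a node or a cusp, of embedding dimension $2$, and $\overline C\setminus\{y_0\}$ is smooth over $\overline k$. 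It follows that $C$ has a unique non‑smooth point $Q$ (the image of $y_0$), that $C\setminus\{Q\}$ is smooth over $k$, and that every point of $\overline C$ over $Q$ is non‑regular, hence equals $y_0$; so the fibre $\Spec(\kappa(Q)\otimes_k\overline k)$ is a single point, i.e.\ $\kappa(Q)/k$ is purely inseparable.

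It remains to show that $C$ is actually singular at $Q$ and that $\kappa(Q)=k$. Let $\nu\colon\tilde C\to C$ be the normalization over $k$, $d:=\dim_kH^0(\tilde C,\sO_{\tilde C})$, $g':=g(\tilde C)$, and $\delta:=\dim_k(\nu_*\sO_{\tilde C}/\sO_C)$. Using $H^0(C,\sO_C)=k$, $g(C)=1$ and Lemma \ref{RR}(1), the normalization sequence gives $0=\chi(C/k,\sO_C)=\chi(\tilde C/k,\sO_{\tilde C})-\delta=d(1-g')-\delta$, so $\delta=d(1-g')$; on the other hand $\delta$ is unchanged under the flat base change to $\overline k$, where it becomes the contribution at $y_0$ of a partial normalization of $\overline C$, hence $\delta\le\delta_{y_0}=1$. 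Thus $\delta\in\{0,1\}$. If $\delta=1$, then $d=1$, $g'=0$, and $\sO_{C,Q}$ is not normal, so not regular; moreover, writing $A:=\sO_{C,Q}\subseteq B:=\sO_{\tilde C,\nu^{-1}(Q)}$ one has $\dim_kB/A=1$, which — via the surjection $B/A\twoheadrightarrow\big(\prod_{Q'\in\nu^{-1}(Q)}\kappa(Q')\big)/\kappa(Q)$ and the inclusions $\kappa(Q)\hookrightarrow\kappa(Q')$, together with the remark that if no residue field grows then either $\m_A$ contains a uniformiser of $B$ (forcing $A=B$, absurd) or $\m_A\subseteq\m_B^2$ (forcing $\dim_kB/A\ge[\kappa(Q):k]$) — forces $\kappa(Q)=k$. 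This is conclusion $(2)$.

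The remaining case $\delta=0$, i.e.\ $C$ itself regular, must be shown impossible when $p>3$; this is the crux. Set $R:=\widehat{\sO}_{C,Q}$, a complete discrete valuation ring which is a $k$‑algebra with residue field $\kappa:=\kappa(Q)$ purely inseparable over $k$, and with $\widehat{R\otimes_k\overline k}\cong\widehat{\sO}_{\overline C,y_0}$ — a node or cusp of embedding dimension $2$; in particular $R\otimes_k\overline k$ is reduced of embedding dimension $2$. From the embedding dimension (and the length $\dim_k(R\otimes_k\overline k)/\pi(R\otimes_k\overline k)=[\kappa:k]$, where $\pi$ is a uniformiser of $R$) one reduces to $\kappa=k(a)$ with $c:=a^{p^e}\in k\setminus k^p$. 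Choose a lift $\alpha\in R$ of $a$ with $\nu_0:=\ord_R(\alpha^{p^e}-c)$ minimal; then $p^e\nmid\nu_0$, since otherwise replacing $\alpha$ by $\alpha+\pi^mw$ with $mp^e=\nu_0$ would strictly lower $\nu_0$. The element $\beta:=\alpha-c^{1/p^e}\in R\otimes_k\overline k$ lies in the maximal ideal (its residue in $\kappa\otimes_k\overline k$ is a nonzero nilpotent) and satisfies $\beta^{p^e}=(\text{unit})\cdot\pi^{\nu_0}$; passing to the normalization of $R\otimes_k\overline k$, whose value semigroup at each branch omits $1$, we have $\ord_{\mathfrak v}(\pi)\ge1$ and $\ord_{\mathfrak v}(\beta)\ge1$ for every branch $\mathfrak v$, so the identity $p^e\ord_{\mathfrak v}(\beta)=\nu_0\ord_{\mathfrak v}(\pi)$ with $p^e\nmid\nu_0$ forces $p\mid\ord_{\mathfrak v}(\pi)$ for every $\mathfrak v$. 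But $\pi$ is part of a minimal generating set of $\m_{R\otimes_k\overline k}$ and the embedding dimension is $2$, so $\pi\notin\m^2_{R\otimes_k\overline k}$, whence $\ord_{\mathfrak v}(\pi)\le3$ for some branch $\mathfrak v$. Since $p>3$ this is impossible, so $C$ is smooth — contrary to hypothesis; hence $\delta=0$ does not occur, and the proof is complete.

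The decisive step is this last one: the non‑existence, in characteristic $>3$, of a regular geometrically integral non‑smooth curve of arithmetic genus one (the quasi‑elliptic phenomenon, which genuinely occurs in characteristics $2$ and $3$, where the cusp contributes a residue field of degree $p$ and $\ord_{\mathfrak v}(\pi)=p\le3$). Making the local argument rigorous takes some care — one must justify the reduction to $\kappa=k(a)$, excluding residue fields that require two or more generators over $k$ as well as the nodal configuration, using besides the embedding dimension the congruence $\sum_{\mathfrak v}\ord_{\mathfrak v}(\pi)\in\{[\kappa:k],[\kappa:k]+1\}$ coming from the length computation above — but in every configuration the numerical constraints become incompatible precisely because $p>3$.
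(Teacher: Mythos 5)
Your strategy --- classify the singularity of $\overline C:=C\times_k\overline k$ (a unique point $y_0$ which is a node or cusp with $\delta_{y_0}=1$), split according to the $\delta$-invariant of $C$ over $k$, and kill the case ``$C$ regular at $Q$ but not smooth'' by a Tate--genus-change style valuation computation --- is genuinely different from the paper's, which base changes to $K(C)$ to manufacture a rational regular point, maps by $|3P|$ onto a Weierstrass cubic $ay^2=x^3+bx+c$, and reads everything off the Jacobian criterion and the discriminant. Your case $\delta=1$ is fine (indeed $\dim_k(B/A)=1$ forces $B/A\cong A/\m_A$ as $A$-modules, giving $\kappa(Q)=k$ immediately), and the intended endgame in the case $\delta=0$ ($p\mid\ord_{\mathfrak v}(\pi)$ on every branch versus $\min_{\mathfrak v}\ord_{\mathfrak v}(\pi)\le 3$) is the right one.

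But the pivotal claim $p^e\nmid\nu_0$ is not established, and the justification you give is inverted: for any $w$ one has $(\alpha+\pi^m w)^{p^e}-c=\pi^{\nu_0}(u+w^{p^e})$, so such a substitution can only \emph{raise} $\ord_R(\alpha^{p^e}-c)$, never lower it, and a minimal $\nu_0$ yields no contradiction. The correct normalization is a \emph{maximal} $\nu_0$ (the supremum is finite since $R$ is complete and $c\notin\Frac(R)^{p^e}$, the latter because $R\otimes_k\overline k$ is reduced), but then the improvement step requires solving $w^{p^e}\equiv-\bar u$ in $\kappa$, i.e.\ $-\bar u\in\kappa^{p^e}$, which is not automatic and is nowhere addressed; one needs a different argument here (for instance: if $\nu_0=mp^e$ then $\beta\pi^{-m}$ is integral over $S:=R\otimes_k\overline k$, so $\ord_{\mathfrak v}(\beta)=m\,\ord_{\mathfrak v}(\pi)$ on every branch, which contradicts the explicit value semigroups of the node and the cusp together with $\m_S=(\pi,\beta)$). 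Second, the reduction to a monogenic residue field $\kappa=k(a)$ is asserted but not proved, and the numerical constraint $\sum_{\mathfrak v}\ord_{\mathfrak v}(\pi)=[\kappa:k]$ you invoke does not exclude the two-generator case: for the cusp a single branch with $\ord_{\mathfrak v}(\pi)=p^{e_1+e_2}$ is perfectly consistent with it (note that there $\pi\in\m_S^2$, so your ``$\ord_{\mathfrak v}(\pi)\le 3$ for some branch'' also fails), and ruling it out requires comparing finer invariants of $S/\pi S$ and $\kappa\otimes_k\overline k$ (Hilbert function, nilpotency orders). Until these two points are repaired, the exclusion of the $\delta=0$ case --- which is precisely where the hypothesis $p>3$ must enter --- is not proved. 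A smaller unstated point: the bound $\delta\le\delta_{y_0}$ needs $\tilde C\times_k\overline k$ to be reduced, hence an intermediate ring between $\sO_{\overline C}$ and its normalization; this does hold ($\tilde C$ is $S_1$ and $C$ is geometrically integral), but should be said.
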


\begin{proof}
    By Theorem \ref{smoothness of genus one} below, we may assume that $C$ is not regular.
    Let $\nu : \overline{C} \to C$ be the normalization.
    Since $\nu$ is birational and $C$ is geometrically integral, $\overline{C}$ is geometrically irreducible and geometrically $(R_0)$ over $k$.
    This shows that $\overline{C}$ is geometrically integral over $k$, which implies that $H^0(\overline{C}, \sO_{\overline{C}}) = k$.
    Therefore, the short exact sequence 
    \[
    0 \to \sO_C \to \nu_* \sO_{\overline{C}} \to \mathcal{F}:=\nu_*\sO_{\overline{C}}/\sO_C \to 0
    \]
    induces the exact sequence
    \[
    0 \to H^0(C, \mathcal{F}) \to H^1(C, \sO_{C}) \to H^1(\overline{C}, \sO_{\overline{C}}) \to 0.
    \]
    
    Since $H^0(C, \mathcal{F}) \neq 0$ and $\dim_k H^1(C, \sO_C)=1$, we conclude that 
    \[
    \left\{\begin{matrix} g(\overline{C})=0 , \textup{ and}\\ \dim_k H^0(C, \mathcal{F})=1. \end{matrix}\right. 
    \]
    The former statement implies that $\overline{C}$ is smooth (Lemma \ref{genus zero} (1)).
    By the latter condition, $\nu$ is an isomorphism outside a single $k$-rational point, which completes the proof.
\end{proof}

\subsection{Geometric reducedness of curves}

In this subsection, we give sufficient conditions for curves with genus zero or one to be geometrically reduced.
We first consider the genus zero case.

\begin{lem}\label{separability genus zero}
Let $C$ be a curve over a field $k$ with $\ch(k) > 2$ and $g(C)=0$.
We further assume that $C$ is Gorenstein and the field $H^0(C,\sO_C)$ is separable over $k$.
Then $C$ is geometrically reduced over $k$.
\end{lem}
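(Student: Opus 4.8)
The plan is to reduce to a statement about the field of global sections together with a degree-one invertible sheaf, and then invoke Lemma \ref{genus zero}(ii). Set $K := H^0(C,\sO_C)$, which is a finite separable extension of $k$, so in particular $C$ has a natural $K$-scheme structure. First I would produce an invertible sheaf $L$ on $C$ whose normalized degree $\deg_{C/k}(L)/\dim_k(K)$ is odd; the cleanest choice is $L = \omega_C$ (or rather, $\omega_{C/K}$), since $C$ is Gorenstein and $H^0(C,\sO_C)=K$, so Lemma \ref{RR}(2) applied over $K$ gives $\deg_{C/K}(\omega_C) = 2g(C)-2 = -2$. That is even, not odd, so $\omega_C$ itself does not directly work; instead I would take a closed point $P\in C$ lying in the regular (hence Gorenstein-free) locus and consider $L := \omega_C(P)$ or more simply just show directly that $C$, as a curve over $K$ with $g(C)=0$, possesses an invertible sheaf of odd degree. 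Concretely: over $K$, the curve $C$ is a Gorenstein curve of arithmetic genus zero with $H^0(C,\sO_C)=K$; by Riemann--Roch over $K$ one has $\chi(C/K,\omega_C^{\vee}) = -\deg_{C/K}(\omega_C) + \chi(C/K,\sO_C) = 2 + 1 = 3 > 0$, so $\omega_C^{\vee}$ has a section, i.e. there is an effective divisor $D$ with $\sO_C(D)\cong\omega_C^{\vee}$ and $\deg_{C/K}(\sO_C(D)) = 2$. That is still even. The genuinely useful observation is that an anticanonical divisor on such a curve has degree $2$, and one wants instead a degree-$1$ sheaf; so I would look for a $K$-point or degree-one Cartier divisor.

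Here is the approach I actually expect to carry through. By Lemma \ref{RR}(1) over $K$, for the invertible sheaf $\omega_C^{\vee}$ we get $h^0(C,\omega_C^{\vee}) \geq \chi(C/K,\omega_C^{\vee}) = 3$, while $h^0(C,\sO_C) = 1$; hence the sheaf $\omega_C^{\vee}$ is globally generated away from a proper closed subset and, more to the point, the jump in $h^0$ forces the existence of an effective Cartier divisor. To land on an odd degree, I would instead use $\omega_C^{\vee} \otimes \sO_C(-P)$ for a suitable point, or argue that since $\chi(C/K,\omega_C^{\vee}(-P)) = 2 - \deg_{C/K}(P) + 1$ and one can choose $P$ with $\deg_{C/K}(P)$ such that the resulting sheaf has odd degree and a section. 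The key point is elementary: on a Gorenstein genus-zero curve over $K$ with $H^0 = K$, the group $\Pic(C)$ maps onto $\Z$ via $\deg_{C/K}$ — indeed $\omega_C^{\vee}$ has degree $2$ and any closed point gives a divisor of degree $\deg\kappa(P)$; the $\gcd$ of these degrees is $1$ because $\sO_C(-K_C)$ has degree $2$ and at a regular point of $C$ (which exists, as $C$ is reduced — it has genus zero so is generically smooth over $K$, the characteristic being $>2$ is not even needed for that) one gets a divisor of odd degree unless all regular closed points have even residue degree, a case I would rule out by a direct Riemann--Roch count. Once an invertible sheaf $L$ with $\deg_{C/k}(L)/\dim_k(K)$ odd is in hand, Lemma \ref{genus zero}(ii) applies verbatim and gives that $C$ is smooth over $k$, hence a fortiori geometrically reduced.

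Alternatively — and this may be the shortest route — I would base change to $\overline{k}$ directly: $C' := C\times_k\overline{k}$ is a Gorenstein curve (Gorenstein is preserved by flat base change) with $H^1(C',\sO_{C'}) = 0$ by flat base change, so every connected component of $C'_{\mathrm{red}}$ is a rational curve, and the possible failure of reducedness is controlled by the $\overline{k}$-algebra $K\otimes_k\overline{k}$ appearing in $H^0$. Since $K/k$ is separable, $K\otimes_k\overline{k} \cong \overline{k}^{[K:k]}$ is reduced, so $H^0(C',\sO_{C'})$ is reduced; combined with $C'$ being Gorenstein, $S_1$, and of dimension one, one deduces $C'$ is reduced. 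This uses that a $1$-dimensional Gorenstein (in particular $S_1$) scheme whose ring of global sections over each component is a field is reduced — essentially, no embedded or nilpotent structure can occur in codimension zero once $S_1$ holds and the total ring of fractions at the generic points is reduced, the latter following from the separability of $K$.

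The main obstacle I anticipate is the bookkeeping in producing an invertible sheaf of odd normalized degree for the first approach (one must be careful that "$k$-rational regular point" need not exist, only that the $\gcd$ of residue degrees at regular points, together with $2$ from $\omega_C^{\vee}$, is $1$); the second approach sidesteps this entirely and I would likely present that one, the crux being the purely commutative-algebra fact that a $1$-dimensional $S_1$ Gorenstein $\overline{k}$-scheme with reduced $H^0$ on each component — equivalently, with reduced total quotient ring — is reduced, which follows since $S_1$ forbids embedded primes and the absence of nilpotents at the (finitely many) generic points is exactly the reducedness of $K\otimes_k\overline{k}$, guaranteed by separability of $K/k$.
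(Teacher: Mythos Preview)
Both of your approaches have genuine gaps.

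\textbf{Approach 1} aims to prove that $C$ is \emph{smooth} over $k$ by producing an odd-degree line bundle and invoking Lemma~\ref{genus zero}(ii). But smoothness can genuinely fail under the hypotheses of the lemma: take $C=\{x^2-ay^2=0\}\subset\PP^2_k$ with $a\in k\setminus k^2$ and $\ch(k)>2$. This is an integral Gorenstein conic with $H^0(C,\sO_C)=k$ and $g(C)=0$, yet it is singular at $[0:0:1]$ (it becomes a pair of distinct lines over $\overline k$, so it \emph{is} geometrically reduced). By Koll\'ar's Lemma 10.6(2), the existence of an odd-degree line bundle on such a $C$ would force $C\cong\PP^1_k$; since $C$ is singular, no such bundle exists. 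Your ``direct Riemann--Roch count'' cannot rule this case out.

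\textbf{Approach 2} contains a conflation that breaks the argument. You assert that for the base change $C'=C\times_k\overline{k}$, reducedness of $H^0(C',\sO_{C'})=K\otimes_k\overline{k}$ is ``equivalently'' reducedness of the total quotient ring, and hence (with $S_1$) gives reducedness of $C'$. These are not equivalent: the local rings at the generic points of $C'$ are localizations of $K(C)\otimes_k\overline{k}$, not of $K\otimes_k\overline{k}$, so what you need is separability of the \emph{function field} $K(C)$ over $k$, which is not among the hypotheses. The abstract claim ``$1$-dimensional Gorenstein with reduced $H^0$ on each component implies reduced'' is false already for the double line $\{x^2=0\}\subset\PP^2_{\overline{k}}$: it is Gorenstein, has $H^0=\overline{k}$ and $H^1=0$, but is non-reduced.

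The paper's proof avoids both issues by first reducing to $H^0(C,\sO_C)=k$ (using separability of $K/k$), then invoking Koll\'ar's Lemma 10.6(3) to embed $C$ as a conic in $\PP^2_k$. Geometric non-reducedness would force the degree-$2$ divisor $C\subset\PP^2$ to acquire a component of multiplicity divisible by $p$ after a purely inseparable base change, which is impossible since $p>2$. This is where the genus-zero and characteristic hypotheses are actually used together.
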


\begin{proof}
After replacing $k$ by $H^0(C,\sO_C)$, we may assume that $H^0(C, \sO_C)=k$.
By \cite[Lemma 10.6 (3)]{Kol}, $C$ is isomorphic to a conic in $\PP^2_k$.
If $C$ is not geometrically reduced, then there exists a purely inseparable finite field extension $\ell$ of $k$ such that the pullback $\mu^*C$ of the divisor $C$ by the natural morphism $\mu: \PP^2_{\ell} \to \PP^2_k$ has a coefficient divided by $p:= \ch(k)$.
Since $p>2$, this is a contradiction to the fact that $\mu^* C$ is linearly equivalent to $\sO_{\PP^2_{\ell}}(2)$.
\end{proof}

We next consider the genus one case.

\begin{thm}[\textup{\cite[Theorem 1.1]{PW}, cf.~\cite[Proposition 9.11 (2)]{Tan}}]\label{smoothness of genus one}
Let $C$ be a regular curve over a field $k$ with $\ch(k) > 3$ and $g(C)=1$.
We further assume that the field $H^0(C,\sO_C)$ is separable over $k$.
Then $C$ is smooth over $k$.
\end{thm}

\begin{defn}\label{defn: node}
An excellent local ring $(A,\m)$ is a \emph{node} if there exists an isomorphism 
\[
\widehat{A} \cong R/(f)
\]
of rings where $(R,\n)$ is a regular local ring of dimension $2$ and $f \in \n^2$ is an element such that $\overline{u f} \in R/\n^3$ is not a square in the ring $R/\n^3$ for every unit $u \in R$.
\end{defn}

\begin{rem}\label{rem: node}
    The above definition differs slightly from that in \cite[Paragraph 1.41]{Kol}. 
    Below, we provide some clarifications regarding this distinction:
    \begin{enumerate}[label=\textup{(\arabic*)}]
        \item The two definitions are genuinely distinct when considered over a non algebraically closed field. 
        For instance, $A = \R[[x,y]] / (-x^2 + y^3)$ is a node under the definition in \cite[Paragraph 1.41]{Kol}, but not under the present definition.
        \item Nevertheless, certain properties established in \cite{Kol} remain valid for our definition. 
        For example, as stated in \cite[Theorem 2.31]{Kol}, if $(x \in X)$ is a normal excellent surface singularity and $(X, B)$ is log canonical with $B$ a reduced divisor, then $B$ is either regular or a node (in the sense of Definition \ref{defn: node}). 
        The proof is analogous but relies on Lemma \ref{equiv on node} below, rather than \cite[Claim 1.41.1]{Kol}.
    \end{enumerate}
\end{rem}

\begin{lem}\label{equiv on node}
    Let $(A, \m, \kappa = A/\m)$ be an excellent reduced local ring of dimension one, $\overline{A}$ the integral closure of $A$ in its total ring of fractions, and $J \subseteq \overline{A}$ the intersection of all maximal ideals of $\overline{A}$. 
    Then the following conditions are equivalent:
    \begin{enumerate}[label=\textup{(\arabic*)}]
        \item $A$ is a node (in the sense of Definition \ref{defn: node}).
        \item $J \subseteq A$ and $\dim_{\kappa} (\overline{A}/J) = 2$.
    \end{enumerate}
\end{lem}

\begin{proof}
    Since $A$ is excellent, $\overline{A}$ is finite over $A$ and $\widehat{A}$ is reduced. 
    Furthermore, the base change $R := \overline{A} \otimes_A \widehat{A}$ is the normalization of $\widehat{A}$. 
    The natural homomorphism 
    \[
    \overline{A}/\m\overline{A} \to R/\m R\] is an isomorphism, as it is the base change of the isomorphism $A/\m \xrightarrow{\sim} \widehat{A}/\m\widehat{A}$.
    Consequently, the quotient ring $R/JR$ is isomorphic to $\overline{A}/J$, which is a finite product of fields. 
    Combining this with the fact that every maximal ideal of $R$ lies over a maximal ideal of $\overline{A}$, we deduce that $JR$ is the intersection of all maximal ideals of $R$. 
    By replacing $A$ with $\widehat{A}$, we may assume that $A$ is complete. 
    In particular, by the Cohen Structure Theorem, $A$ is a quotient of a regular local ring.

    The implication (2) $\Rightarrow$ (1) follows similarly to the proof of \cite[Claim 1.41.1]{Kol}
    (observe that $J$ is a principal divisor, as $\overline{A}$ is a finite product of DVRs by \cite[Proposition 4.3.2]{HS}, which implies that the associated graded ring $\mathrm{Gr}_J(\overline{A})$ is isomorphic to $(A/J)[t]$).

    For the converse implication (1) $\Rightarrow$ (2), since the order of $f \in R$ is two, if $x, y$ are sufficiently general generators of the maximal ideal $\n \subseteq R$, we can express $f$ as 
    \begin{align}\label{eq:node}
        f = u x^2 + v xy + w y^2,
    \end{align}
    where $u, v, w \in R$ and $u$ is a unit. 
    Moreover, the polynomial $\overline{u}T^2 + \overline{v}T + \overline{w} \in \kappa[T]$ is irreducible or has two distinct roots in $\kappa$, as $\overline{u^{-1}f} \in R/\n^3$ is not a square.

    Equation \eqref{eq:node} implies that if $y$ is a zero divisor in $A$, then $x$ is also a zero divisor, contradicting the assumption that $A$ is reduced and one-dimensional. Thus, $x/y$ belongs to the total ring of fractions of $A$. By the dimension formula (\cite[Theorem B.5.1]{HS}), every maximal ideal of $B := A[x/y]$ has height one. 
    Since we have
    \[
        B \cong A[T]/(uT^2 + vT + w, yT - x),
    \]
    the quotient ring $B/\m B$ is either isomorphic to $\kappa \times \kappa$ or a degree-two field extension of $\kappa$. As $\m B$ is a principal ideal $(y)$, the ring $B$ is integrally closed. 
    Thus, we have $B=\overline{A}$ and $J = (y)$, which implies (2).
\end{proof}

\begin{lem}\label{node2}
Let $(A, \m_A, \kappa)$ be a Noetherian local domain which is a node and $a \in \Frac (A)$ be an element in the fraction field $\Frac (A)$ of $A$.
We assume that the characteristic $p : = \ch(\kappa)$ of $\kappa$ is larger than $2$.
If we have $a^p \in A$ then one has $a \in A$.
\end{lem}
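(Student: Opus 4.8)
The plan is to work inside the integral closure $\overline{A}$ of $A$ in $\Frac(A)$ and to exploit the very explicit description of $\overline{A}$ recorded in Remark \ref{rmk on node}. Since $a^p \in A$, the element $a$ is a root of the monic polynomial $T^p - a^p \in A[T]$, so $a$ is integral over $A$ and hence $a \in \overline{A}$. By Remark \ref{rmk on node}, $\overline{A}$ is module-finite over $A$, one has $\m_A \overline{A} = \m_A$, and the $\kappa$-algebra $\overline{A}/\m_A$ is isomorphic either to $\kappa \times \kappa$ or to a field extension $\ell$ of $\kappa$ with $[\ell:\kappa]=2$. The key reduction is that, because $\m_A \overline{A} = \m_A \subseteq A$, it suffices to show that the image of $a$ in $\overline{A}/\m_A$ lies in the subfield $\kappa = A/\m_A$: subtracting from $a$ a lift in $A$ of that image then puts $a$ into $\m_A \subseteq A$.

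For the split case $\overline{A}/\m_A \cong \kappa \times \kappa$, I would use the two induced projections $\pi_1, \pi_2 \colon \overline{A} \to \kappa$, noting that each restricts on $A$ to the residue map $A \to \kappa$ and that their common zero locus is exactly $\m_A$. From $a^p \in A$ one gets $\pi_1(a)^p = \pi_1(a^p) = \pi_2(a^p) = \pi_2(a)^p$ in $\kappa$; since the Frobenius of the field $\kappa$ is injective, $\pi_1(a) = \pi_2(a) \in \kappa$, and the reduction above finishes this case. (This case does not even need $p > 2$.)

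For the non-split case $\overline{A}/\m_A \cong \ell$ with $[\ell:\kappa] = 2$, observe first that $\m_A$ is then a maximal ideal of $\overline{A}$, so $\overline{A}$ is local with maximal ideal $\m_A$. Let $\bar{a} \in \ell$ be the image of $a$; then $\bar{a}^p \in \kappa$ because $a^p \in A$. If $\bar{a} \notin \kappa$, then $\kappa(\bar{a}) = \ell$, so the minimal polynomial of $\bar{a}$ over $\kappa$ has degree $2$; but it divides $T^p - \bar{a}^p = (T - \bar{a})^p$ in $\ell[T]$, forcing it to equal $(T - \bar{a})^2 = T^2 - 2\bar{a}\,T + \bar{a}^2$, hence $2\bar{a} \in \kappa$, and since $p > 2$ this gives $\bar{a} \in \kappa$, a contradiction. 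Thus $\bar{a} \in \kappa$ and $a \in A$. I expect this last case to be the crux of the argument: it is exactly the place where the hypothesis $p > 2$ enters, via the impossibility of a quadratic subextension sitting inside a purely inseparable one in odd residue characteristic.
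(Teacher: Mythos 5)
Your proof is correct and follows essentially the same route as the paper's: both arguments reduce modulo $\m_A=\m_A\cdot\overline{A}$ to the quotient $\overline{A}/\m_A\cong\kappa\times\kappa$ or $\ell$ with $[\ell:\kappa]=2$, settle the split case by injectivity of Frobenius, and settle the non-split case by the fact that a degree-$2$ extension in characteristic $p>2$ is separable (your minimal-polynomial computation with $(T-\bar a)^2=T^2-2\bar a T+\bar a^2$ is just an explicit unwinding of that separability argument). The only additions are cosmetic: you make explicit that $a\in\overline{A}$ because it satisfies $T^p-a^p$, and you phrase the Cartesian-square reduction as ``subtract a lift of the residue,'' both of which the paper leaves implicit.
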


\begin{proof}
We first note that the following diagram 
\[
\xymatrix{
A \ar[r] \ar[d] & \overline{A} \ar[d] \\
\kappa \ar[r] & \overline{A}/(\m_A \cdot \overline{A})
}
\]
is a Cartesian diagram of $A$-modules since the kernels of vertical maps are isomorphic.
By Lemma \ref{equiv on node}, the quotient ring $\overline{A}/(\m_A \cdot \overline{A})$ is isomorphic to $\kappa \times \kappa$ or a field extension $\ell$ of $\kappa$ with $[\ell: \kappa]=2$.
We note that in the latter case, $\ell$ is separable over $\kappa$ since we have $p \neq 2$.

In the first case, it follows from the above Cartesian diagram that 
\[
A \cong \{ x \in \overline{A} \mid p_1(x) = p_2(x)  \in \kappa \},
\]
where $p_i : \overline{A} \to \kappa$ is the projection to the $i$-th component.
Since we have $a^p \in A$, one has $p_1(a^p)=p_2(a^p) \in \kappa$.
Therefore, we have $p_1(a)^p = p_2(a)^p \in \kappa.$
This proves that $p_1(a)=p_2(a) \in \kappa$, and hence $a \in A$.

In the latter case, we have
\[
A = \{ x \in \overline{A} \mid \textup{the image $\overline{x} \in \ell =\overline{A}/(\m_A \cdot \overline{A})$ of $x$ is contained in the subfield $\kappa$}\}.
\]
Since we have $a^p \in A$, the element $\overline{a^p} \in \ell$ is contained in $\kappa$.
Combining this with the separability of the extension $\ell \supseteq \kappa$, we conclude that $\overline{a} \in \kappa$, which proves the assertion.
\end{proof}

\begin{prop}\label{separability}
Let $C$ be a curve over a field $k$ with $\ch(k)>2$ and $H^0(C, \sO_C) = k$.
Assume that $\sO_{C,P}$ is regular or nodal for every closed point $P \in C$.
Then there exists no inseparable finite extension of $k$ contained in the function field $K(C)$.
\end{prop}

\begin{proof}
Assume that the assertion is not true.
Then there exists an element $a \in K(C)$ such that $a \in k^{1/p} \setminus k$.
Since $C$ is integral, we may consider $\sO_{C,P}$ as a subrings of $K(C)$ for every closed point $P$.
Then it follows from the sheaf condition for $\sO_C$ that we have the equation
\[
H^0(C, \sO_{C}) = \bigcap_{P \in C} \sO_{C,P}.
\]
Therefore, there exists a closed point $P \in C$ such that $a \not\in \sO_{C,P}$ and $a^p \in \sO_{C,P}$.
If $P$ is a regular point, then this is a contradiction because $\sO_{C,P}$ is integrally closed.
Otherwise, it is also a contradiction by Lemma \ref{node2}.

\end{proof}

\begin{cor}\label{separability genus one}
Let $C$ be a curve over a field $k$ with $H^0(C, \sO_C)=k$, $g(C)=1$ and $\ch(k) >2$.
We further assume that $C$ is not regular, but every non-regular point is a node.
Then $C$ is geometrically reduced over $k$.
\end{cor}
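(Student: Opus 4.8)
The plan is to reduce to the genus-zero situation of Lemma~\ref{separability genus zero} by passing to the normalization. Let $\nu\colon \overline{C}\to C$ be the normalization and set $k'' := H^0(\overline{C},\sO_{\overline{C}})$. Since $\overline{C}$ is the normalization of an integral projective scheme, $k''$ is a field, finite over $k$, and in fact equals the algebraic closure of $k$ inside $K(C)=K(\overline{C})$ (every element of $K(C)$ algebraic over $k$ is integral over each normal local ring $\sO_{\overline{C},P}$, hence lies in all of them); moreover $\overline{C}$ is a regular, hence Gorenstein, projective curve over $k$. The hypotheses of Proposition~\ref{separability} hold for $C$ (we have $H^0(C,\sO_C)=k$, $\ch(k)>2$, and $\sO_{C,P}$ is regular or a node for every closed point $P$), so part~(1) of that proposition shows that $k''$, being a finite extension of $k$ contained in $K(C)$, is separable over $k$.

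The key computation is with Euler characteristics. From $H^0(C,\sO_C)=k$ and $g(C)=1$ we get $\chi(C/k,\sO_C)=1-g(C)=0$. As $C$ is reduced, $\sO_C$ is a subsheaf of $\nu_*\sO_{\overline{C}}$; denote the quotient by $\mathcal{Q}$, a coherent sheaf supported on the non-regular locus of $C$, which is finite and \emph{nonempty} precisely because $C$ is not regular, equivalently not normal. Then $\mathcal{Q}$ has $0$-dimensional support, so $\chi(C/k,\mathcal{Q})=\dim_k H^0(C,\mathcal{Q})\ge 1$. Using additivity of $\chi$ along $0\to\sO_C\to\nu_*\sO_{\overline{C}}\to\mathcal{Q}\to 0$ together with the fact that $\nu$ is finite (so $H^i(C,\nu_*\sO_{\overline{C}})=H^i(\overline{C},\sO_{\overline{C}})$), I obtain $\chi(\overline{C}/k,\sO_{\overline{C}})=\chi(C/k,\sO_C)+\chi(C/k,\mathcal{Q})\ge 1$. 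On the other hand $H^0(\overline{C},\sO_{\overline{C}})=k''$ gives $\chi(\overline{C}/k,\sO_{\overline{C}})=[k'':k]\,(1-g(\overline{C}))$. Comparing, $1-g(\overline{C})>0$, hence $g(\overline{C})=0$.

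Now Lemma~\ref{separability genus zero} applies to the curve $\overline{C}$ over $k$: it is Gorenstein with $g(\overline{C})=0$, $\ch(k)>2$, and $H^0(\overline{C},\sO_{\overline{C}})=k''$ is separable over $k$. Therefore $\overline{C}$ is geometrically reduced over $k$. To finish, I transfer this back to $C$: for every finite field extension $\ell/k$, base change the inclusion $\sO_C\hookrightarrow\nu_*\sO_{\overline{C}}$ along the flat morphism $\Spec(\ell)\to\Spec(k)$; since $\nu$ is affine this yields an inclusion $\sO_{C_\ell}\hookrightarrow(\nu_\ell)_*\sO_{\overline{C}_\ell}$, and because $\overline{C}_\ell$ is reduced so is $(\nu_\ell)_*\sO_{\overline{C}_\ell}$ and hence its subsheaf $\sO_{C_\ell}$. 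Thus every $C_\ell$ is reduced, i.e. $C$ is geometrically reduced over $k$.

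The crux is the genus estimate in the second step: the presence of even one non-regular point forces $\chi(\overline{C}/k,\sO_{\overline{C}})>\chi(C/k,\sO_C)=0$, and with $g(C)=1$ this is exactly enough to pin down $g(\overline{C})=0$ and unlock the genus-zero lemma — a mechanism that would break for $g(C)\ge 2$, consistent with the fact that a regular (hence not further normalizable) genus-one curve can fail to be geometrically reduced in small characteristic, which is why the hypothesis that $C$ is \emph{not} regular is essential. The node hypothesis itself enters only through Proposition~\ref{separability}(1), to guarantee the separability of $k''/k$; everything else is routine bookkeeping with Euler characteristics and flat base change.
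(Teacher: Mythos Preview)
Your proof is correct and follows essentially the same route as the paper: pass to the normalization $\overline{C}$, use the short exact sequence $0\to\sO_C\to\nu_*\sO_{\overline{C}}\to\mathcal{Q}\to 0$ together with the non-regularity of $C$ to force $g(\overline{C})=0$, invoke Proposition~\ref{separability}(1) for the separability of $H^0(\overline{C},\sO_{\overline{C}})$ over $k$, and then apply Lemma~\ref{separability genus zero}. The only cosmetic difference is that you package the cohomology computation via additivity of Euler characteristics, whereas the paper writes out the five-term long exact sequence and counts dimensions directly; the content is identical.
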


\begin{proof}
Let $\nu:\overline{C} \to C$ be the normalization of $C$.
We set $\ell : = H^0(\overline{C}, \sO_{\overline{C}})$ and $\mathcal{F} : = (\nu_*\sO_{\overline{C}}) / \sO_C$.
By taking the global section of the short exact sequence
\[
0 \to \sO_C \to \nu_* \sO_{\overline{C}} \to \mathcal{F} \to 0,
\]
we obtain the exact sequence
\[
0 \to k \to \ell \to H^0(C, \mathcal{F}) \to k \to H^1(\overline{C}, \sO_{\overline{C}}) \to 0
\]
of $k$-modules.
By counting dimension, we have
\[
[\ell:k] = \dim_k H^0(C, \mathcal{F}) +[\ell:k] \dim_\ell (H^1(\overline{C}, \sO_{\overline{C}})).
\]
Since we have $H^0(C, \mathcal{F}) \neq 0$, we conclude that $H^1(\overline{C}, \sO_{\overline{C}})=0$.
On the other hand, $\ell$ is separable over $k$ by Proposition \ref{separability}.
Therefore, it follows from Lemma \ref{separability genus zero} that $\overline{C}$ is geometrically reduced over $k$, which implies the assertion in the corollary.
\end{proof}

\section{Geometrically log canonical (resp.~geometrically klt) singularities}\label{4}

In this section, we will give sufficient conditions for normal surface singularities to be geometrically log canonical (resp.~geometrically klt).

\begin{defn}
Let $(x \in X)$ be a normal surface singularity, $f: Y \to X$ be a proper birational morphism from an integral scheme $Y$ and $\Exc(f) = \sum_{i=1}^n E_i$ be the irreducible decomposition of the exceptional locus.
\begin{enumerate}
\item We say that $f$ satisfies the \emph{$(*)$-condition} if 
\begin{enumerate}
\item $E_i$ is a smooth curve over $\kappa(x)$ for every $i$, and 
\item the scheme theoretic intersection $E_i \cap E_j$ is smooth over $\kappa(x)$ for every $i \neq j$.
\end{enumerate}
\item We say that $f$ satisfies the \emph{$(**)$-condition} if there exists an \'{e}tale surjective morphism $\phi : X' \to X$ from a scheme $X'$ such that the base change $f'_{x'}: Y \times_X \Spec(\sO_{X',x'}) \to \Spec(\sO_{X',x'})$ of $f$ satisfies the $(*)$-condition for every closed point $x' \in X'$.
\end{enumerate}
\end{defn}

\begin{lem}\label{criterion2}
Let $X$ be a $2$-dimensional normal variety over a field $k$.
We assume that the following conditions are satisfied:
\begin{enumerate}[label=\textup{(\roman*)}]
\item $X$ is smooth over $k$ outside finitely many closed points $x_1, \dots, x_n \in X$.
\item For every $i$, the residue field $\kappa(x_i)$ of $X$ at $x_i$ is separable over $k$.
\item For every $i$, there exists a resolution $f_i : Y_i \to \Spec(\sO_{X,{x_i}})$ which  satisfies the $(**)$-condition.
\end{enumerate}
If $X$ is log canonical (resp.~klt), then $X$ is geometrically log canonical (resp.~geometrically klt) over $k$.
\end{lem}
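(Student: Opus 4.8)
The plan is to reduce to Lemma \ref{criterion1} by producing, after a suitable étale base change, a log resolution of $X$ which is \emph{relatively SNC} over $k$. The key observation is that the $(*)$-condition on a resolution $f\colon Y\to\Spec(\sO_{X,x})$ says exactly that $(Y,\Exc(f))$ is relatively SNC over $\kappa(x)$: each $E_i$ is smooth over $\kappa(x)$, each pairwise intersection $E_i\cap E_j$ is smooth over $\kappa(x)$, and (since surfaces have no triple intersections of exceptional curves on the minimal resolution, and more generally after possibly passing to a log resolution) the divisor $\sum E_i$ has strict normal crossings universally. So the first step is to globalize: by condition (iii) and the definition of the $(**)$-condition, for each $i$ there is an étale surjection $\phi_i\colon X_i'\to X$ such that the base change of $f_i$ over every closed point of $X_i'$ satisfies the $(*)$-condition. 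Since being log canonical, klt, geometrically log canonical and geometrically klt can all be checked étale-locally (and the formation of discrepancies commutes with étale base change, by the same computation as in the proof of Lemma \ref{criterion1} with $\mu$ étale rather than finite), it suffices to prove the statement after replacing $X$ by $X_i'$. Away from the finitely many non-smooth points $X$ is smooth over $k$, hence trivially geometrically regular, so we may work locally at one point $x=x_i$ with residue field $\kappa=\kappa(x)$ separable over $k$.

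The second step is to pass from resolving $\Spec(\sO_{X,x})$ to resolving a Zariski-open neighborhood of $x$ in $X$, and to upgrade the $(*)$-condition from "over $\kappa(x)$" to "relatively SNC over $k$". For the first point: a resolution of $\Spec(\sO_{X,x})$ satisfying the $(*)$-condition spreads out to a proper birational $f\colon Y\to U$ over some open $x\in U\subseteq X$ with $\Exc(f)=\sum E_i$, where shrinking $U$ we may assume each $E_i$ is smooth over $\kappa(x)$ and the $E_i\cap E_j$ are smooth over $\kappa(x)$, and that $Y$ is regular (hence a genuine resolution); since $\kappa(x)$ is separable over $k$, smoothness over $\kappa(x)$ together with the fact that $\kappa(x)\otimes_k\ell$ is reduced for every finite extension $\ell/k$ forces each $E_i$ and each $E_i\cap E_j$ to remain smooth after base change to $\ell$. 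Thus $(Y,\Exc(f))$ is relatively SNC over $k$ (the SNC condition involves only the smoothness of the components, the smoothness of the pairwise intersections, and the absence of triple points, all of which are preserved). Replacing $f$ by a log resolution if necessary — e.g. blowing up to separate components, which keeps the relative SNC property — we obtain a log resolution of a neighborhood of $x$ that is relatively SNC over $k$.

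The third and final step is to glue these local log resolutions. After shrinking, each non-smooth point $x_i$ lies in an open $U_i$ carrying a log resolution $f_i\colon Y_i\to U_i$ with $(Y_i,\Exc(f_i))$ relatively SNC over $k$; over the smooth locus $U_0:=X\setminus\{x_1,\dots,x_n\}$ we take the identity. These patch (over overlaps each resolution is an isomorphism onto the smooth locus) to a global log resolution $f\colon Y\to X$ with $(Y,\Exc(f))$ relatively SNC over $k$. Then Lemma \ref{criterion1} applies verbatim: if $X$ is $\Q$-Gorenstein and klt (resp.\ log canonical) — and $X$ is $\Q$-Gorenstein because $X$ is a normal surface that is log canonical, or because one checks $\Q$-Gorensteinness étale-locally — then $X$ is geometrically klt (resp.\ geometrically log canonical) over $k$.

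The main obstacle I anticipate is the bookkeeping around the étale base change in the definition of the $(**)$-condition: the $(**)$-condition only gives a resolution satisfying $(*)$ after base change to the \emph{henselian} (or strictly, étale) local rings $\sO_{X',x'}$, so I must be careful that such a resolution spreads out to an actual étale neighborhood and that the resulting $\Exc$-divisor has no triple points, which is clear for the minimal resolution of a surface singularity but needs a word when $f$ is a more general resolution. Checking that discrepancies, and the properties "log canonical" / "klt" / "$\Q$-Gorenstein" / "geometrically regular over $k$", are all insensitive to étale base change — and compatible with the gluing — is routine but is where all the hypotheses (ii) and (iii) get used, so it warrants explicit citations to the Stacks project as in Remark \ref{rmk on geometrically} and Lemma \ref{criterion1}.
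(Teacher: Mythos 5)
Your proposal is correct and follows essentially the same route as the paper: reduce to one non-smooth point, use the \'etale cover supplied by the $(**)$-condition together with \'etale invariance of discrepancies and of the (geometrically) lc/klt properties, observe that the $(*)$-condition plus separability of $\kappa(x)$ over $k$ makes the exceptional pair relatively SNC over $k$, and conclude via Lemma \ref{criterion1}. The only cosmetic difference is that the paper spreads the local resolution out over $X$ first and then base-changes it along the \'etale morphism $\phi\colon X'\to X$ (citing \cite{ST mixed} for the descent step), rather than replacing $X$ by $X'$ at the outset and gluing afterwards.
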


\begin{proof}
After shrinking $X$, we may assume that $X$ is smooth over $k$ outside a single closed point $x \in X$.
Moreover, we may assume that there exists a resolution $f: Y \to X$ which is isomorphic outside $x$ and whose restriction to $\Spec(\sO_{X,x})$ satisfies the $(**)$-condition.
We note that $X$ is not regular at $x$ since $\kappa(x)$ is separable over $k$ (\cite[Lemma 00TV]{Sta}).
In particular, we have $\Exc(f) \neq \emptyset$.

It then follows from the definition of $(**)$-condition that there exist an \'{e}tale morphism $\phi: X' \to X$ from a normal variety $X'$ such that $\phi^{-1}(x)$ is non-empty and the base change $f' : Y' : = Y \times_{X} X' \to X'$ of $f$ satisfies the $(*)$-condition after restricting to $\Spec(\sO_{X',x'})$ for every point $x' \in \phi^{-1}(x)$.
\[
\xymatrix{
Y' \ar^-{\psi}[r] \ar^-{f'}[d] & Y \ar^-{f}[d] \\
X' \ar^-{\phi}[r] & X
}\]

Since the morphism $\psi : Y' \to Y$ is an \'{e}tale morphism, the morphism $f'$ is again a resolution of singularities and every coefficient of $K_{Y'/X'} = \psi^*(K_{Y/X})$ (c.f. \cite[Lemma 2.6]{ST mixed}) is at least (resp.~larger than) $-1$.
On the other hand, since $\kappa(x')$ is separable over $k$ and $f'$ satisfies the $(*)$-condition after restricting to $\Spec(\sO_{X',x'})$, we conclude that the pair $(Y', \Exc(f'))$ is geometrically SNC over $k$.
Therefore, it follows from Lemma \ref{criterion1} that $X'$ is geometrically log canonical (resp.~geometrically klt) over $k$, that is, $X' \times_k \ell$ is log canonical (resp.~klt) for every finite extension $\ell$ of $k$.
Then the assertion in the lemma follows by applying \cite[Lemma 2.6]{ST mixed} to the \'{e}tale morphism $\phi \times_k \ell : X' \times_k \ell \to X \times_k \ell$.
\end{proof}

\subsection{Rational singularities}

In this subsection, we give a sufficient condition for $2$-dimensional log canonical and rational singularities to satisfy the $(**)$-condition.

\begin{lem}\label{residue extension}
Let $(x \in X)$ be a normal surface singularity and $\ell$ be a finite separable field extension of the residue field $\kappa : = \kappa(x)$.
Then there exists an \'{e}tale surjective morphism $\phi: X' \to X$ from a normal surface singularity $(x' \in X')$ such that the residue field $\kappa(x')$ is isomorphic to $\ell$.
\end{lem}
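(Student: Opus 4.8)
The plan is to construct $X'$ as an open subscheme of a scheme finite étale over $X$. Write $X = \Spec R$ with $R$ a two-dimensional excellent normal local ring, maximal ideal $\m$, and residue field $\kappa = R/\m$. Since $\ell$ is finite separable over $\kappa$, the primitive element theorem gives $\ell \cong \kappa[t]/(\bar g)$ for some monic $\bar g \in \kappa[t]$ that is separable (i.e. $\bar g$ and $\bar g'$ are coprime in $\kappa[t]$). First I would lift $\bar g$ to a monic polynomial $g \in R[t]$ of the same degree, and set $A := R[t]/(g)$, with $B := \Spec A \to \Spec R$ the associated finite morphism. The discriminant of $g$ is an element $d \in R$ whose image in $\kappa$ is the discriminant of $\bar g$, which is a unit because $\bar g$ is separable; hence $d \notin \m$, so $d$ is a unit in $R$ (as $R$ is local). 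Therefore $R \to A$ is finite étale, and in particular $A$ is a finite product of normal local rings, each of which is two-dimensional and excellent.

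Next I would identify the right factor/point. Since $d$ is a unit, $A \otimes_R \kappa = \kappa[t]/(\bar g) = \ell$ is a field, so the fibre of $B \to X$ over $x$ is a single point $x'$ with residue field $\kappa(x') \cong \ell$. As $\ell$ is a field, this already forces $A$ to be local: any maximal ideal of $A$ contracts to $\m$ in $R$ (because $A$ is integral over $R$ and $R$ is local, every maximal ideal of $A$ lies over the maximal ideal of $R$), hence corresponds to a maximal ideal of $A\otimes_R\kappa = \ell$, of which there is exactly one. So $(x' \in X') := (x' \in \Spec A)$ is itself a normal surface singularity: it is excellent and normal (étale over the excellent normal $R$), two-dimensional, local, and it carries a dualizing complex via the finite étale pullback of $\omega_X^\bullet$. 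The morphism $\phi : X' \to X$ is étale; it is surjective because it is a nonempty finite morphism onto the local scheme $X$ (the closed point $x$ is hit, being the image of $x'$, and every point of $X$ specializes to $x$ while $\phi$ is closed), and $\kappa(x') \cong \ell$ by construction. This is exactly the assertion.

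The only mild subtlety — and the step I would be most careful about — is making sure that $X'$ satisfies our standing hypotheses on a "normal surface singularity," namely that it is affine with a dualizing complex, that the ring is excellent, normal, two-dimensional and local, and that $x'$ is its unique closed point. Excellence and normality are preserved under finite étale (indeed étale) base change; dimension is preserved because $R \to A$ is finite; $A$ is local by the argument above; and $\pi^!\omega_X^\bullet$ (for $\pi$ the finite flat map $X' \to X$) furnishes a dualizing complex on $X'$. No genus, characteristic, or log-canonicity input is needed here — this lemma is purely a descent/ascent statement about étale neighborhoods realizing a prescribed separable residue extension, and everything reduces to the classical fact that adjoining a root of a separable monic polynomial with unit discriminant is finite étale.
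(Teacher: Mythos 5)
Your proposal is correct and follows essentially the same route as the paper: lift the minimal polynomial of a primitive element of $\ell/\kappa$ to a monic $G \in R[t]$ and take $X' = \Spec(R[t]/(G))$. The paper states this more tersely, while you additionally spell out the verifications (unit discriminant gives \'etaleness, the fibre over $x$ being $\Spec \ell$ forces $R[t]/(G)$ to be local, and normality/excellence/the dualizing complex descend along the finite \'etale map), all of which are accurate.
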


\begin{proof}
Let $R$ be the structure ring of $X$ and $\m$ be the maximal ideal of $R$.
Since $\ell$ is finite separable extension of $\kappa$, there exists a monic polynomial $g(t) \in \kappa[t]$ such that $\ell \cong \kappa[t]/(g)$.
Take a monic polynomial $G(t) \in R[t]$ whose image to $\kappa[t]$ corresponds to $g(t)$ and 
we set $B : = R[t]/(G)$.
Then the natural morphism $\phi : X' :  = \Spec (B) \to \Spec (R)$ is a desired morphism.
\end{proof}

\begin{prop}\label{* for rational}
Let $(x \in X)$ be a normal surface singularity.
We assume that the following conditions are satisfied:
\begin{enumerate}[label=\textup{(\roman*)}]
\item the characteristic $\ch(\kappa(x))$ of the residue field $\kappa(x)$ satisfies $\ch(\kappa(x)) > 3$, and
\item $(x \in X)$ is a log canonical and rational singularity,
\end{enumerate}
Then the minimal resolution $f: Y \to X$ satisfies the $(**)$-condition.
\end{prop}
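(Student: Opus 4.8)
The plan is to reduce the $(**)$-condition, via Lemma~\ref{residue extension}, to verifying the $(*)$-condition after a single finite separable extension of the residue field, and then to run that verification using the classification of two-dimensional log canonical rational surface singularities together with the genus-zero curve results of Section~3.

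Let $f\colon Y\to X$ be the minimal resolution, with $\Exc(f)=\sum_{i=1}^{n}E_i$; write $\kappa:=\kappa(x)$ and $\ell_i:=H^0(E_i,\sO_{E_i})$. I would first make two reductions. Since $X=\Spec R$ with $R$ a two-dimensional normal local ring, $X$ is regular away from $x$, and any \'{e}tale cover $\phi\colon X'\to X$ is regular away from $\phi^{-1}(x)$; so at a closed point $x'\in X'$ not lying over $x$ the base change of $f$ to $\Spec(\sO_{X',x'})$ is an isomorphism and there is nothing to check. On the other hand, for $x'$ over $x$ the base change $Y\times_X\Spec(\sO_{X',x'})\to\Spec(\sO_{X',x'})$ is again a resolution, its exceptional locus being $\Exc(f)\times_\kappa\kappa(x')$, and $\kappa(x')/\kappa$ is finite separable. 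Hence, using Lemma~\ref{residue extension}, it suffices to produce one finite separable extension $\ell/\kappa$ such that, for all $i$, every irreducible component of $E_i\times_\kappa\ell$ is smooth over $\ell$, and, for all $i\neq j$, every component of $(E_i\cap E_j)\times_\kappa\ell$ is smooth over $\ell$.

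Next I would extract the structure of the $E_i$. Since $(x\in X)$ is rational, each $E_i$ is an effective Cartier divisor on the regular surface $Y$, hence Gorenstein, and $H^1(E_i,\sO_{E_i})=0$, so $g(E_i)=0$. Since $(x\in X)$ is moreover log canonical with $\ch(\kappa)>3$, I would invoke the classification of such singularities: the dual graph of $f$ is one of the classical ``log terminal'' shapes (a chain, or a tree with a single branch point of $D$- or $E$-type), and from this one reads off that each $\ell_i$ is a finite \emph{separable} extension of $\kappa$, and that for $i\neq j$ the scheme $E_i\cap E_j$ is reduced with all its residue fields separable over $\kappa$. Granting the separability of $\ell_i/\kappa$, the curve $E_i$, regarded over its field of global sections $\ell_i$, is an integral Gorenstein curve with $g=0$ and $H^0=\ell_i$; by Lemma~\ref{separability genus zero} (here $\ch(\kappa)>2$) it is geometrically reduced over $\ell_i$, so $E_i\times_{\ell_i}\overline{\ell_i}$ is a reduced conic. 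In characteristic $\neq2$ such a conic is either a smooth conic --- in which case $E_i$ is geometrically integral, hence smooth over $\ell_i$ by Lemma~\ref{genus zero}(i) --- or it becomes a pair of transversal lines after a separable quadratic extension $\ell_i'/\ell_i$, these lines and their intersection point then being defined over $\ell_i'$.

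Finally the conclusion is formal: take $\ell$ to be the Galois closure over $\kappa$ of the compositum of all the $\ell_i$, the quadratic extensions $\ell_i'$ above, and the residue fields of the points of the $E_i\cap E_j$; by the separability statements and $\ch(\kappa)\neq2$ this $\ell$ is finite separable over $\kappa$. For this $\ell$, the components of $E_i\times_\kappa\ell$ are smooth rational curves over $\ell$ (meeting transversally at $\ell$-rational points), and $(E_i\cap E_j)\times_\kappa\ell$ is reduced with residue fields split by $\ell$, hence \'{e}tale and so smooth over $\ell$; thus the corresponding base change of $f$ satisfies the $(*)$-condition, and $f$ satisfies the $(**)$-condition. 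I expect the real work --- the main obstacle --- to be the two structural statements invoked above: that over a possibly imperfect $\kappa$ the minimal resolution of a log canonical rational surface singularity has no exceptional curve with inseparable field of constants, no non-reduced exceptional curve, and no tangential or inseparable intersection point. This is exactly where $\ch(\kappa)>3$ and log canonicity are needed: log canonicity, through the adjunction identity $\deg_{E_i/\kappa}(K_Y)=-(E_i^2)-2r_i=\sum_j a_{E_j}(X)\,(E_i\cdot E_j)$ with every $a_{E_j}(X)\ge-1$ (Lemma~\ref{RR}, Remark~\ref{rmk on dual graph}), pins $f$ down to a log terminal configuration, and over those configurations the characteristic being $>3$ lets the curve lemmas of Section~3 exclude the remaining pathologies.
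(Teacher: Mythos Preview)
Your proposal is correct and takes essentially the same approach as the paper: reduce via Lemma~\ref{residue extension} to a separable residue-field extension, use the classification (Theorem~\ref{classification}(1),(2)) to bound $r_i=\dim_\kappa H^0(E_i,\sO_{E_i})\le 4$, conclude separability of $\ell_i/\kappa$ since $p\ge 5$, and then apply Lemmas~\ref{genus zero} and~\ref{separability genus zero}. The paper organizes the argument slightly more cleanly by performing the separable base change \emph{first}, so that every exceptional prime $E_i$ becomes geometrically irreducible; this forces $r_i=1$ (purely inseparable of degree $\le 4<p$ means degree $1$), hence each $E_i$ is geometrically integral and Lemma~\ref{genus zero}(i) gives smoothness immediately---so the singular-conic case you analyze never actually arises. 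For the intersections the paper reads off from the classification that $(E_i\cdot E_j)\in\{0,\max(r_i,r_j)\}$, and when nonzero a dimension count over $K_i=H^0(E_i,\sO_{E_i})$ shows $E_i\cap E_j\cong\Spec K_i$ is a single reduced point of degree $\le 4$, hence separable, over $\kappa$. One small slip in your write-up: ``rational log canonical'' is strictly larger than ``log terminal'' (Figures~\ref{classification lc rational1} and~\ref{classification lc rational2} list rational, strictly log canonical dual graphs), but those graphs also satisfy $r_i\le 4$, so your argument goes through unchanged.
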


\begin{proof}
Let $\Exc(f)=\sum_{i=1}^m E_i$ be the irreducible decomposition of the exceptional locus $\Exc(f)$.
We consider $E_i$ as the reduced closed subscheme of $Y$.

Take a separable finite extension $\ell$ of $\kappa(x)$ such that every irreducible component of the base change $E_i \times_{\kappa(x)} \ell$ is geometrically irreducible over $\ell$  for every $i$.
By Lemma \ref{residue extension}, there exists an \'{e}tale surjective morphism $\phi :X' \to X$ from a normal surface singularity $(x' \in X')$ with the residue field $\ell$.
We note that the base change 
\[
f' : Y' : = Y \times_X X' \to X'
\]
of $f$ is again the minimal resolution since $K_{Y'/X'}$ is the pullback of $K_{Y/X}$ (c.f. \cite[Lemma 2.6]{ST mixed}).
Therefore, by replacing $X$ by $X'$, we may assume that every irreducible component of $\Exc(f)$ is geometrically irreducible.

On the other hand, It follows from the classification of the dual graph (Theorem \ref{classification} (1) and (2) below) that we have $g(E_i)=0$ and $r_i : = \dim_{\kappa(x)} (H^0(E_i, \sO_{E_i})) \le 4$ for all $i$.
In particular, each $E_i$ is smooth over $\kappa(x)$ by Lemma \ref{genus zero} (i) and Lemma \ref{separability genus zero}.

We next show that the scheme theoretic intersection $E_i \cap E_j$ is smooth over $\kappa(x)$ for every $i \neq j$.
It follows from the classification that $(E_i \cdot E_j)$ is either $0$ or $\max\{r_i, r_j\}$.
We may assume that $(E_i \cdot E_j)=r_i$.
Combining the equation
\[
r_i = (E_i \cdot E_j) = \sum_{P \in E_i \cap E_j} \dim_{\kappa(x)} (\sO_{E_i \cap E_j, P})
\]
with the fact that $\sO_{E_i \cap E_j, P}$ is an algebra over the field $K_i : = H^0(E_i, \sO_{E_i})$ whose degree over $\kappa(x)$ is $r_i$, the scheme theoretic intersection $E_i \cap E_j$ is isomorphic to $\Spec(K_i)$.
Since the extension degree $r_i$ of $K_i$ over $\kappa(x)$ is not divisible by $\ch(\kappa(x))$, the scheme theoretic intersection $E_i \cap E_j$ is smooth over $\kappa(x)$.
\end{proof}

\begin{cor}\label{geometric klt}
Let $X$ be a $2$-dimensional geometrically normal variety over a field $k$ of characteristic $p>3$ and $x_1, \dots, x_n \in X$ be the non-smooth points of $X$ over $k$.
Suppose that $X$ is klt at $x_i$ and the residue field $\kappa(x_i)$ of $X$ at $x_i$ is separable over $k$ for every $i$.
Then $X$ is geometrically klt over $k$.
\end{cor}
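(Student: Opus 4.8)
The plan is to obtain the corollary by verifying the hypotheses of Lemma \ref{criterion2} for $X$ and its non-smooth points $x_1,\dots,x_n$. Since $X$ is geometrically normal it is in particular normal, so it makes sense to ask whether it is geometrically klt. Hypothesis (ii) of Lemma \ref{criterion2} is literally part of the assumption. For hypothesis (i) I would argue that the non-smooth locus of $X$ over $k$ is a finite set of closed points: the base change $X_{\overline{k}}$ is a normal surface, hence smooth over $\overline{k}$ outside its finite singular locus, and a point $x\in X$ is smooth over $k$ if and only if every point of $X_{\overline{k}}$ lying over it is regular; thus the non-smooth locus of $X/k$ is the finite image of $\Sing X_{\overline{k}}$, which by definition is exactly $\{x_1,\dots,x_n\}$.

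The substantive point is hypothesis (iii): for each $i$ one must produce a resolution of $\Spec\sO_{X,x_i}$ satisfying the $(**)$-condition. I would take the minimal resolution $f_i\colon Y_i\to\Spec\sO_{X,x_i}$, which exists because $\sO_{X,x_i}$ is an excellent normal local ring of dimension two that is essentially of finite type over $k$ (so it admits a dualizing complex and is a normal surface singularity in the sense of the paper). Since $X$ is klt at $x_i$, the singularity $\Spec\sO_{X,x_i}$ is log canonical and, what is crucial here, rational; moreover $\kappa(x_i)$ is a finite extension of $k$, so $\ch\kappa(x_i)=p>3$. Proposition \ref{* for rational} then applies and shows that $f_i$ satisfies the $(**)$-condition. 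Having checked (i), (ii), (iii), Lemma \ref{criterion2} yields that $X$ is geometrically klt over $k$.

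The main obstacle I anticipate is justifying that a two-dimensional klt singularity is rational. I would derive this from the classification of dual graphs of log canonical surface singularities in characteristic $p>3$ given later in the paper (Theorem \ref{classification}): a klt singularity is log canonical with all discrepancies strictly greater than $-1$, which excludes the simple elliptic and cusp-type dual graphs — and their finite quotients — that account for all non-rational log canonical surface singularities, so the dual graph of a klt singularity must be of rational type. (Alternatively one can appeal to the known fact that klt surface singularities are rational.) It is precisely this rationality step, together with the bound $r_i\le 4$ used in Proposition \ref{* for rational}, that forces the hypothesis $p>3$: for $p=3$ one could have $p\mid r_i$ and the smoothness of the scheme-theoretic intersections $E_i\cap E_j$ could fail, in line with the characteristic $2$ and $3$ counterexamples mentioned in the introduction.
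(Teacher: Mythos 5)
Your proposal is correct and follows exactly the paper's route: the paper proves this corollary by citing Lemma \ref{criterion2} together with Proposition \ref{* for rational}, and your write-up simply fills in the (correct) details — that geometric normality forces the non-smooth locus to be a finite set of closed points, and that a two-dimensional klt singularity is log canonical and rational (as one reads off from Theorem \ref{classification}), so that Proposition \ref{* for rational} applies to the minimal resolution at each $x_i$. Your closing remarks on why $p>3$ is needed likewise match the paper's Example \ref{counterexample}.
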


\begin{proof}
This follows from Lemma \ref{criterion2} and Proposition \ref{* for rational}.
\end{proof}

\begin{eg}\label{counterexample}
In Corollary \ref{geometric klt}, the assumption that $p$ is larger than $3$ is optimal.
We give counter-examples in characteristic $2$ and $3$.
\begin{enumerate}[label=\textup{(\roman*)}]
\item Let $k$ be a field of characteristic $2$ and $a \in k$ be an element with $\sqrt{a} \not\in k$.
Then the normal surface
\[
X : = \Spec(k[x,y,z]/(z^2+x^3+ay^4+y^7))
\] is smooth over $k$ outside the origin $P : =(0,0,0) \in X$ and is klt at $x$ because the dual graph of $(P \in X)$ is
\vspace*{0.2cm}
\[
\begin{tikzpicture}
\node[draw, shape=circle, inner sep=4.5pt] (A0) at (-1,0){};
\draw[shift={(0,0.4)}] (-1,0) node{\tiny $2$};
\node[draw, shape=circle, inner sep=4.5pt] (A1) at (0,0){};
\draw[shift={(0,0.4)}] (0,0) node{\tiny $2$};
\node[draw, shape=circle, inner sep=4.5pt] (A2) at (1,0){};
\draw[shift={(0,0.4)}] (1,0) node{\tiny $1$};
\node[draw, shape=circle, inner sep=4.5pt] (A3) at (2,0){};
\draw[shift={(0,0.4)}] (2,0) node{\tiny $1$};

\draw (A0.10)--(A1.170);
\draw (A0.-10)--(A1.190);
\draw (A1.10)--(A2.170);
\draw (A1.-10)--(A2.190);
\draw (A2)--(A3);
\end{tikzpicture}
\]
which appears in the classification (Figure \ref{classification klt}) of numerically klt graphs.
However, by considering the change of coordinate $z \mapsto z+\sqrt{a}y^2$, the base change $X_\ell : = X \times_k \ell$ of $X$ to $\ell : = k(\sqrt{a})$ is isomorphic to $\Spec(\ell[x,y,z]/(z^2+x^3+y^7))$, which is not klt, and more strongly, not log canonical.
\
\item Let $k$ be a field of characteristic $3$ and $a \in k$ be an element with $ \sqrt[3]{a} \not\in k$.
Then the normal surface 
\[
X : = \Spec(k[x,y,z]/(z^2+x^3-ay^3+y^7))
\] is smooth over $k$ outside the origin $P : =(0,0,0) \in X$ and is klt at $x$ because the dual graph of $(P \in X)$ is
\[
\begin{tikzpicture}
\node[draw, shape=circle, inner sep=4.5pt] (B1) at (0,0){};
\draw[shift={(0,0.4)}] (0,0) node{\tiny $3$};
\node[draw, shape=circle, inner sep=4.5pt] (B2) at (1,0){};
\draw[shift={(0,0.4)}] (1,0) node{\tiny $1$};

\draw (B1.10)--(B2.170);
\draw (B1)--(B2);
\draw (B1.-10)--(B2.190);
\end{tikzpicture}
\]
which appears in the classification (Figure \ref{classification klt}) of numerically klt graphs.
However, by considering the change of coordinate $x \mapsto x+\sqrt[3]{a}y$, the base change $X_\ell : = X \times_k \ell$ of $X$ to $\ell : = k(\sqrt[3]{a})$ is isomorphic to $\Spec(\ell[x,y,z]/(z^2+x^3+y^7))$, which is not log canonical.

\end{enumerate}
\end{eg}

\subsection{(Twisted) cusp singularities}

In this subsection, we give a sufficient condition for (twisted) cusp singularities to satisfy the $(**)$-condition.

\begin{lem}\label{H^0-lemma}
Let $Y$ be a Noetherian integral scheme with $\dim Y = 2$ and $Z, E$ be reduced Weil divisors on $Y$ with no common component.
We write $Z \cap E = \{P_1, \dots, P_n\}$.
We assume that the pair $(Y,Z+E) $ is SNC around $P_i$ for every $i$.
Then the ring $H^0(Z+E, \sO_{Z+E})$ is naturally isomorphic to the subring
\[
\{ (s,t) \in H^0(Z, \sO_Z) \times H^0(E, \sO_E) \mid s(P_i) = t(P_i)  \in \kappa(P_i), \ \forall i\}
\]
of $H^0(Z, \sO_Z) \times H^0(E, \sO_E)$, where $s(P_i)$ (resp.~$t(P_i)$) is the image of $s$ (resp.~$t$) by the natural morphism from $H^0(Z, \sO_Z)$ (resp.~$H^0(E, \sO_E)$) to the residue field $\kappa(P_i)$.
\end{lem}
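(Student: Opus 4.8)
The plan is to realize $\sO_{Z+E}$ as the kernel of a natural morphism of $\sO_Y$-modules with skyscraper cokernel, and then pass to global sections.

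Write $\mathcal{I}_Z := \sO_Y(-Z)$ and $\mathcal{I}_E := \sO_Y(-E)$ for the divisorial ideal sheaves, so that $\sO_Z = \sO_Y/\mathcal{I}_Z$ and $\sO_E = \sO_Y/\mathcal{I}_E$. For any two ideal sheaves there is an exact sequence
\[
0 \longrightarrow \sO_Y/(\mathcal{I}_Z \cap \mathcal{I}_E) \xrightarrow{\ \iota\ } \sO_Z \oplus \sO_E \xrightarrow{\ \delta\ } \sO_Y/(\mathcal{I}_Z + \mathcal{I}_E) \longrightarrow 0,
\]
with $\iota$ induced by the quotient maps and $\delta(s,t) = s - t$; exactness is the purely ring-theoretic identity $0 \to A/(I\cap J) \to A/I\oplus A/J \to A/(I+J)\to 0$, checked stalkwise. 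The next step is to identify the two outer terms using the hypotheses. Away from $Z\cap E = \{P_1,\dots,P_n\}$ the divisors $Z$ and $E$ are disjoint, so locally one of $\mathcal{I}_Z,\mathcal{I}_E$ is the unit ideal; there $\mathcal{I}_Z\cap\mathcal{I}_E$ agrees with $\sO_Y(-Z-E)$ and $\mathcal{I}_Z+\mathcal{I}_E = \sO_Y$. At $P_i$, the SNC hypothesis furnishes a regular system of parameters $x,y$ of the regular local ring $\sO_{Y,P_i}$ with $\mathcal{I}_{Z,P_i}=(x)$ and $\mathcal{I}_{E,P_i}=(y)$; since $x,y$ are non-associate primes, $(x)\cap(y)=(xy)=\sO_Y(-Z-E)_{P_i}$ and $(x)+(y)=\m_{P_i}$. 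Comparing stalks shows $\sO_Y/(\mathcal{I}_Z\cap\mathcal{I}_E)=\sO_{Z+E}$ and that $\mathcal{Q}:=\sO_Y/(\mathcal{I}_Z+\mathcal{I}_E)$ is the skyscraper sheaf with stalk $\kappa(P_i)$ at $P_i$ and zero elsewhere; moreover $\iota$ is a ring homomorphism and, at $P_i$, $\delta$ is $(s,t)\mapsto s(P_i)-t(P_i)\in\kappa(P_i)$.

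Finally, applying $H^0(Y,-)$ to $0\to\sO_{Z+E}\xrightarrow{\iota}\sO_Z\oplus\sO_E\xrightarrow{\delta}\mathcal{Q}\to0$ and using $H^0(Y,\mathcal{Q})=\bigoplus_{i=1}^n\kappa(P_i)$, left-exactness gives that $H^0(Z+E,\sO_{Z+E})$ is the kernel of the map $H^0(Z,\sO_Z)\oplus H^0(E,\sO_E)\to\bigoplus_{i=1}^n\kappa(P_i)$ sending $(s,t)$ to $(s(P_i)-t(P_i))_i$; this kernel is exactly the claimed subring $\{(s,t)\mid s(P_i)=t(P_i)\in\kappa(P_i)\ \forall i\}$, and since $\iota$ and the evaluation maps are ring homomorphisms the identification is a ring isomorphism.

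The only step that genuinely uses the assumptions — and the place to be careful — is the identification of the outer terms of this Mayer--Vietoris-type sequence: that $\mathcal{I}_Z\cap\mathcal{I}_E$ is the divisorial ideal $\sO_Y(-Z-E)$, so the left-hand term is honestly $\sO_{Z+E}$, and that $\sO_Y/(\mathcal{I}_Z+\mathcal{I}_E)$ is the \emph{reduced} skyscraper $\bigoplus_i\kappa(P_i)$ rather than a thickening; both are local at the $P_i$ and follow from the SNC hypothesis there, while away from $Z\cap E$ they are immediate.
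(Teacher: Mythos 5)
Your proof is correct. The key local input is identical to the paper's: at each $P_i$ the SNC hypothesis gives $\sO_{Y,P_i}=R$ regular with $x,y$ a regular system of parameters cutting out $Z$ and $E$, and the identities $(x)\cap(y)=(xy)$ and $(x)+(y)=\m_{P_i}$ produce the fiber-product description of $R/(xy)$, which is exactly the paper's second Cartesian square. Where you differ is in the globalization: the paper proves the statement by an explicit patching argument, exhibiting $H^0(Z+E,\sO_{Z+E})$ as a fiber product of $H^0(Z,\sO_Z)\times H^0(E,\sO_E)$ with $\prod_i\sO_{Z+E,P_i}$ by gluing sections over the cover $Z^{\circ}\cup E^{\circ}\cup U_1\cup\dots\cup U_n$, and then composes with the local squares; you instead package everything into the Mayer--Vietoris sequence $0\to\sO_Y/(\mathcal{I}_Z\cap\mathcal{I}_E)\to\sO_Z\oplus\sO_E\to\sO_Y/(\mathcal{I}_Z+\mathcal{I}_E)\to0$ and invoke left-exactness of global sections, the cokernel being a skyscraper with reduced stalks $\kappa(P_i)$ precisely because of the SNC hypothesis. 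This replaces the hands-on gluing by a formal property of $H^0$ and cleanly isolates the two places the hypotheses are used. The only caveat worth recording is notational: on a general Noetherian integral surface one should read $\sO_Y(-Z)$ simply as the ideal sheaf of the reduced closed subscheme $Z$ and $\sO_{Z+E}$ as $\sO_Y/(\mathcal{I}_Z\cdot\mathcal{I}_E)$; with that convention your stalkwise check that $\mathcal{I}_Z\cap\mathcal{I}_E=\mathcal{I}_Z\cdot\mathcal{I}_E$ (trivial away from $Z\cap E$, and $(x)\cap(y)=(xy)$ at the $P_i$) is exactly what identifies the left-hand term of your sequence with $\sO_{Z+E}$, matching the paper's convention $\sO_{Z+E,P_i}=R/(xy)$.
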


\begin{proof}
Let $f_i$ and $g_i$ be the natural surjections
\[
f_i : \sO_{Z+E,P_i} \onto \sO_{Z, P_i} \ \ \textup{and } g_i: \sO_{Z+E, P_i} \onto \sO_{E, P_i}
\]
for every $i$.
We first show that the following diagram
\begin{equation}\label{cartesian1}
\xymatrix{
H^0(Z+E, \sO_{Z+E}) \ar[r] \ar[d] & H^0(Z, \sO_Z) \times H^0(E, \sO_E) \ar[d] \\
\prod_{i=1}^n \sO_{Z+E, P_{i}} \ar^-{\prod_i (f_i \times g_i)}[r] & \prod_{i=1}^n ( \sO_{Z,P_i} \times \sO_{E, P_i})
}
\end{equation}
is Cartesian, in other words, $H^0(Z+E, \sO_{Z+E})$ is naturally isomorphic to the subring
\[
A: = \{(s,t, \alpha_1, \dots, \alpha_n) \in B \mid \forall i, \ s_{P_i}=f_i(\alpha_i) \textup{ and } t_{P_i}=g_i(\alpha_i)\}
\]
of the ring 
\[
B : = H^0(Z,\sO_Z) \times H^0(E, \sO_E) \times \prod_i \sO_{Z+E, P_i} ,\] 
where $s_{P_i}$ (resp.~$t_{P_i}$) is the stalk of $s$ (resp.~$t$) at $P_i$.
Let $\phi: H^0(Z+E, \sO_{Z+E}) \to B$ be the natural morphism.
Since $\phi$ is obviously injective, it is enough to show that $\mathrm{Im}(\phi)=A$.
Take an element $(s,t,\alpha_1, \dots, \alpha_n) \in A$.
Let $Z^{\circ}$ and $B^{\circ}$ be the open subschemes of $Z+E$ defined by
\begin{eqnarray*}
Z^{\circ}  : = Z \setminus \{P_1, \dots, P_n\}, &\textup{ and}\\
E^{\circ}  : = E \setminus \{P_1, \dots, P_n\}. &
\end{eqnarray*}
We write $s^{\circ}$ (resp.~$t^{\circ}$) as the restriction of $s$ (resp.~$t$) to $H^0(Z^{\circ}, \sO_Z) \cong H^0(Z^{\circ}, \sO_{Z+E})$ (resp.~$ H^0(E^{\circ}, \sO_E) \cong H^0(E^{\circ}, \sO_{Z+E})$).
For every $i$, take an open neighborhood $U_i \subseteq Z+E$ of $P_i$ and a section $u_i \in H^0(U_i, \sO_{Z+E})$ whose stalk at $P_i$ coincides with $\alpha_i$.
Since we have $s_{P_i} = f_i(\alpha_i) = (u_i|_{Z})_{P_i}$, after shrinking $U_i$, the restrictions of $s$ and $u_i$ to $U_i \cap Z$ coincide.
Therefore, we have the equation
\[
s^{\circ}|_{U_i \cap Z^{\circ}} = u_i|_{U_i \cap Z^{\circ}} \in H^0(U_i \cap Z^{\circ}, \sO_{Z+E}).
\]
Similarly, after shrinking $U_i$, we have
\[
t^{\circ}|_{U_i \cap E^{\circ}} = u_i|_{U_i \cap E^{\circ}} \in H^0(U_i \cap E^{\circ}, \sO_{Z+E})
\]
for every $i$.
Since $Z+E = Z^{\circ} \cup E^{\circ} \cup U_1 \cup \dots \cup U_n$ is an open immersion, the local sections $s^{\circ}, t^{\circ}, u_1, \dots, u_n$ patch together to give a global section $u \in H^0(Z+E, \sO_{Z+E})$ as desired.

We next show that for every $i$, the following diagram 
\begin{equation}\label{cartesian2}
\xymatrix{
\sO_{Z+E, P_i} \ar^-{f_i \times g_i }[r] \ar[d] & \sO_{Z,P_i} \times \sO_{E,P_i} \ar[d] \\
\kappa(P_i) \ar^-{\Delta}[r] & \kappa(P_i) \times \kappa(P_i)
}
\end{equation}
is Cartesian, where the vertical maps are the natural surjections and $\Delta: \kappa(P_i) \to \kappa(P_i) \times \kappa(P_i)$ is the diagonal map.
By the assumption, $R : = \sO_{Y, P_i}$ is a two-dimensional regular local ring and if $x \in R$ and $y \in R$ are defining equations of $Z$ and $E$ at $P_i$, respectively, then $x, y$ is a regular system of parameter of $R$. 
Since we have 
\[
\sO_{Z+E, P_i} =R/(xy),\  \sO_{Z,P_i} =R/(x) \textup{ and } \sO_{E, P_i} = R/(y),\] 
it is straightforward to verify that the diagram \eqref{cartesian2} is Cartesian.

By combining the Cartesian diagrams \eqref{cartesian1} and \eqref{cartesian2}, we obtain the Cartesian diagram 
\[
\xymatrix{
H^0(Z+E, \sO_{Z+E}) \ar[r] \ar[d] & H^0(Z, \sO_Z) \times H^0(E, \sO_E) \ar[d] \\
\prod_{i=1}^n \kappa(P_i) \ar^-{\prod_{i=1}^n \Delta}[r] & \prod_{i=1}^n ( \kappa(P_i) \times \kappa(P_i))
},
\]
which completes the proof.
\end{proof}

\begin{prop}\label{* for cusp}
Let $(x \in X)$ be a normal surface singularity.
We assume that the following conditions are satisfied:
\begin{enumerate}[label=\textup{(\roman*)}]
\item $(x \in X)$ is log canonical, and
\item the dual graph of $(x \in X)$ is a cusp with parameter $r \ge 1$ and length $n \ge 3$, that is, the number of vertices is $n \ge 3$ and the shape is as follows (c.f. Notation \ref{notation}):

\vspace*{0.3cm}
\[
\begin{tikzpicture}
\node[draw, shape=circle, inner sep=1.8pt] (A) at (0,0){$*$};
\draw[shift={(0,0.4)}] (0,0) node{\tiny $r$};
\node[draw, shape=circle, inner sep=1.8pt] (B) at (1.2,1.2){$*$};
\draw[shift={(0,0.4)}] (1.2,1.2) node{\tiny $r$};
\draw node (C) at (3,1.2){$\cdots$};
\node[draw, shape=circle, inner sep=1.8pt] (D) at (4.8 ,1.2){$*$};
\draw[shift={(0,0.4)}] (4.8,1.2) node{\tiny $r$};
\node[draw, shape=circle, inner sep=1.8pt] (E) at (6,0){$*$};
\draw[shift={(0,0.4)}] (6,0) node{\tiny $r$};
\node[draw, shape=circle, inner sep=1.8pt] (F) at (4.8,-1.2){$*$};
\draw[shift={(0,0.4)}] (4.8,-1.2) node{\tiny $r$};
\draw node (G) at (3,-1.2){$\cdots$};
\node[draw, shape=circle, inner sep=1.8pt] (H) at (1.2,-1.2){$*$};
\draw[shift={(0,0.4)}] (1.2,-1.2) node{\tiny $r$};

\draw (A) --node[auto=left]{\tiny $\langle r \rangle$} (B);
\draw (B) --node[auto=left]{\tiny $\langle r \rangle$} (C);
\draw (C) --node[auto=left]{\tiny $\langle r \rangle$} (D);
\draw (D) --node[auto=left]{\tiny $\langle r \rangle$} (E);
\draw (A) --node[auto=left]{\tiny $\langle r \rangle$} (H);
\draw (H) --node[auto=left]{\tiny $\langle r \rangle$} (G);
\draw (G) --node[auto=left]{\tiny $\langle r \rangle$} (F);
\draw (F) --node[auto=left]{\tiny $\langle r \rangle$} (E);
\end{tikzpicture}.
\]
\end{enumerate}
Then $H^0(E_i, \sO_{E_i})$ is a cyclic extension of $\kappa(x)$ for every vertex $E_i$.
In particular, the minimal resolution $f: Y \to X$ satisfies the $(*)$-condition.
\end{prop}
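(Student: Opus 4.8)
The plan is to reconstruct the ring $H^0(Z,\sO_Z)$ of the reduced exceptional cycle $Z:=\sum_{i=1}^n E_i$ from the structure sheaves of the components and of their intersections, and to identify it both as $\kappa:=\kappa(x)$ and as the fixed field of a cyclic group acting on $H^0(E_1,\sO_{E_1})$. First I would pin down the intersection schemes. From the cusp graph, $g(E_i)=0$, the field $K_i:=H^0(E_i,\sO_{E_i})$ has degree $r$ over $\kappa$, $(E_i\cdot E_{i+1})=r$ for all $i$ (indices modulo $n$), and $(E_i\cdot E_j)=0$ otherwise; moreover each $E_i$ is Gorenstein, being a Cartier divisor on the regular surface $Y$. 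For a closed point $P\in E_i$ the field map $K_i\hookrightarrow\kappa(P)$ is injective, so $[\kappa(P):\kappa]$ is a multiple of $r$; since every point of the effective divisor $E_{i+1}|_{E_i}$ contributes a multiple of $r$ to $\deg_{E_i/\kappa}(\sO_{E_i}(E_{i+1}))=(E_i\cdot E_{i+1})=r$, the divisor $E_{i+1}|_{E_i}$ is a single reduced point $P_i$ with $[\kappa(P_i):\kappa]=r$. Hence $E_i\cap E_{i+1}\cong\Spec\kappa(P_i)$, and both restriction maps $K_i\to\kappa(P_i)$, $K_{i+1}\to\kappa(P_i)$ are $\kappa$-isomorphisms; local equations of $E_i$ and $E_{i+1}$ at $P_i$ then generate $\m_{Y,P_i}$, so $(Y,Z)$ is SNC at $P_i$, while $(E_i\cdot E_j)=0$ forces $E_i\cap E_j=\emptyset$ for $j\neq i\pm1$, and no point lies on three of the $E_j$ (non-adjacent components are disjoint; for $n=3$ a triple point would contradict the log canonicity of $(Y,Z)$ established below).

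Next I would compute $H^0(Z,\sO_Z)$ by applying Lemma \ref{H^0-lemma} repeatedly, adjoining $E_1,\dots,E_n$ in order and gluing successively along $P_1,\dots,P_{n-1}$ and finally along the two points $P_{n-1},P_n$. Tracking the identifications, $H^0(E_1\cup\dots\cup E_{n-1},\sO)$ is carried isomorphically onto $K_1$ by the chain of restriction isomorphisms, and the last gluing at $P_n$ imposes exactly one extra relation, $\tau(s)=s$, where $\tau\in\operatorname{Aut}_\kappa(K_1)$ is the composite of the restriction isomorphisms once around the cycle. Hence $H^0(Z,\sO_Z)\cong K_1^{\langle\tau\rangle}$, and by Artin's theorem $K_1/K_1^{\langle\tau\rangle}$ is cyclic with Galois group $\langle\tau\rangle$.

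The crucial step is then to show $H^0(Z,\sO_Z)=\kappa$. I would first note that $K_{Y/X}=-Z$: the divisors $K_{Y/X}$ and $-Z$ have the same intersection number with each $E_j$, namely $-E_j^2-2r$ — for $K_{Y/X}\cdot E_j=K_Y\cdot E_j=\deg_{E_j/\kappa}(\omega_{E_j})-E_j^2=-2r-E_j^2$ by adjunction together with Lemma \ref{RR}(2) over $K_j$ (using $g(E_j)=0$) — so they agree by negative definiteness of the intersection form; this also supplies the log canonicity of $(Y,Z)$. Consequently $\sO_Y(-Z)=\omega_Y\otimes\sO_Y(-f^*K_X)$ with $f^*K_X=K_Y+Z$ an integral divisor, and a Grauert--Riemenschneider-type vanishing for surface resolutions yields $R^1f_*\sO_Y(-Z)=0$. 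Since any element of $\m$ pulls back to a function vanishing along every $E_i$, hence to a section of $\sO_Y(-Z)$, the map $\sO_{X,x}\to H^0(Z,\sO_Z)$ annihilates $\m$; combined with $R^1f_*\sO_Y(-Z)=0$ and the sequence $0\to\sO_Y(-Z)\to\sO_Y\to\sO_Z\to0$ this gives a surjection $\kappa=\sO_{X,x}/\m\twoheadrightarrow H^0(Z,\sO_Z)$, forcing $H^0(Z,\sO_Z)=\kappa$. Therefore $K_1^{\langle\tau\rangle}=\kappa$, i.e. $K_1/\kappa$ is cyclic of degree $r$, and since all the $K_i$ are $\kappa$-isomorphic via the restrictions through the $P_i$, every $H^0(E_i,\sO_{E_i})$ is cyclic over $\kappa$.

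For the final assertion, separability of $K_i/\kappa$ makes each $E_i$ a Gorenstein curve over $\kappa$ with $H^0(E_i,\sO_{E_i})=K_i$ separable over $\kappa$, $g(E_i)=0$, and carrying the invertible sheaf $\sO_Y(E_{i+1})|_{E_i}$ of degree $(E_i\cdot E_{i+1})/r=1$ over $K_i$, hence smooth over $\kappa$ by Lemma \ref{genus zero}(ii); and each nonempty $E_i\cap E_j\cong\Spec\kappa(P_i)$ with $\kappa(P_i)\cong K_i$ separable is smooth over $\kappa$, so $f$ satisfies the $(*)$-condition. The hard part is the vanishing $R^1f_*\sO_Y(-Z)=0$ (equivalently $H^0(Z,\sO_Z)=\kappa$): one must secure it in characteristic $p$, and when $X$ is only $\Q$-Gorenstein rather than Gorenstein — so that $\sO_Y(-f^*K_X)$ is not the pullback of a line bundle from $X$ — an extra argument is needed, for instance passing to the index-one cover, which is étale over $X\setminus\{x\}$.
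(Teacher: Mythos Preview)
Your argument follows the paper's exactly: pin down the transversal intersection points $P_i$ from $(E_i\cdot E_{i+1})=r=[K_i:\kappa]$, apply Lemma~\ref{H^0-lemma} along the chain $E_1+\cdots+E_{n-1}$ and then glue in $E_n$ to identify $H^0$ of the full reduced exceptional cycle with the fixed field $K_1^{\langle\tau\rangle}$ of a single $\kappa$-automorphism, and conclude via Lemma~\ref{genus zero}(ii). The only divergence is at the step $H^0(E,\sO_E)=\kappa(x)$.

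Here the paper does not go through Grauert--Riemenschneider at all: it simply cites \cite[Corollary 10.10]{Kol}, which gives $H^0(E,\sO_E)\cong\kappa(x)$ for the reduced exceptional fiber of any resolution of a normal surface singularity, with no hypothesis on the characteristic or on the Gorenstein index of $X$. Your detour through $R^1f_*\sO_Y(-Z)=0$ is therefore unnecessary, and the complications you flag --- securing GR vanishing in characteristic $p$, and the twist by $\sO_Y(-f^*K_X)$ when $K_X$ is only $\Q$-Cartier --- simply do not arise. In particular your proposed fix via the index-one cover is both unneeded and fragile (nothing in the statement prevents $p$ from dividing the index, so the cover need not be \'etale in codimension one). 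Replace that paragraph with the citation and your proof is complete and identical to the paper's.
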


\begin{proof}
Let $\Exc(f) = \sum_{i=1}^n E_i$ be the irreducible decomposition and we write $K_i : = H^0(E_i , \sO_{E_i})$.
After renumbering, we may assume that $E_i \cap E_{i+1}$ is non-empty for all $i=1,2, \dots n$, where we set $E_{n+1} : = E_1$.
We note that in the case where $n \ge 4$, it follows from the shape of the dual graph that $E_i \cap E_j \cap E_k$ is empty for every $i<j<k$.
The same holds even in the case where $n=3$, because $(x \in X)$ is log canonical.

On the other hand, for every $i$, it follows from the equation 
\[
(E_i \cdot E_{i+1})=[K_i:\kappa(x)] = [K_{i+1}:\kappa(x)]=r\] 
that the following properties hold:
\begin{enumerate}[label=\textup{(\arabic*)}]
\item $E_i$ and $E_{i+1}$ intersects transversally at a single point $P_i$,
\item the natural morphism $f_i : K_i =H^0(E_i,\sO_{E_i}) \to \kappa(P_i)$ is an isomorphism, and 
\item the natural morphism $g_i : K_{i+1} =H^0(E_{i+1},\sO_{E_{i+1}}) \to \kappa(P_i)$ is an isomorphism, where we set $K_{n+1} : = K_1$.
\end{enumerate}

We write 
\[
Z : =E_1+\cdots + E_{n-1} = E - E_n.\]
By repeatedly applying Lemma \ref{H^0-lemma}, the natural restriction morphism $h_i : H^0(Z, \sO_Z) \to H^0(E_i,\sO_{E_i}) = K_i$ is isomorphic for every $i=1,2, \dots, n-1$.
It again follows from Lemma \ref{H^0-lemma} that we have the isomorphism
\[
H^0(E,\sO_E) \cong \{(s, t) \in H^0(Z,\sO_Z) \times K_n \mid f_{n-1}(h_{n-1}(s)) = g_{n-1}(t) , \ g_n  (h_{1}(s)) = f_n(t) \}.
\]
as $\kappa(x)$-algebras.
Since $f_i, g_i, h_i$ are isomorphic, this $\kappa(x)$-algebra is isomorphic to the invariant subring $H^0(Z,\sO_Z)^\sigma$ of $H^0(Z, \sO_Z)$, where the isomorphism 
\[
  \sigma : H^0(Z,\sO_Z) \xrightarrow{\sim} H^0(Z,\sO_Z)
\] of $\kappa(x)$-algebras is the composite map
\[
\xymatrix{
  H^0(Z,\sO_Z) \ar^-{h_1}[d] \ar^-{\sigma}[rrrr] & & & & H^0(Z,\sO_Z).\\
 K_1 \ar_-{g_n}[rd] &  & K_n \ar_-{g_{n-1}}[rd] &  & K_{n-1} \ar^-{(h_{n-1})^{-1}}[u]   \\
 &\kappa(P_n) \ar^-{(f_n)^{-1}}[ru] & & \kappa(P_{n-1}) \ar^-{(f_{n-1})^{-1}}[ru] &  
}\]
Therefore, $H^0(Z, \sO_Z)$ is a cyclic extension of $H^0(E, \sO_E)$.
Since $H^0(E, \sO_E)$ is isomorphic to $\kappa(x)$ by \cite[Corollary 10.10]{Kol}, we obtain the first assertion.
We now apply Lemma \ref{genus zero} (ii) to obtain the second assertion.
\end{proof}

\begin{rem}\label{rmk on irr cusp}
With the notation above, we further assume that $r \neq 1$.
Since $K_i : =H^0(E_i, \sO_{E_i})$ is isomorphic to $K_1$ and $K_i$ is a Galois extension of degree $r$ over $\kappa(x)$, we have the isomorphism $K_i \otimes_{\kappa(x)} K_1 \cong K_1^{r}$.
Therefore, the following property hold.
\begin{enumerate}
\item $E_i \times_{\kappa(x)} K_1 \cong \coprod_{i=1}^r \PP^1_{K_1}$.
\item Let $\phi: X' \to X$ be an \'{e}tale morphism from a normal surface singularity $(x' \in X')$ whose residue field $\kappa(x')$ is isomorphic to $K_1$ (Lemma \ref{residue extension}).
Then the dual graph of $(x' \in X')$ is a cusp with parameter $1$ and length $rn$:
\[
\begin{tikzpicture}
\node[draw, shape=circle, inner sep=1.8pt] (A) at (0,0){$*$};
\draw[shift={(0,0.4)}] (0,0) node{\tiny $1$};
\node[draw, shape=circle, inner sep=1.8pt] (B) at (1.2,1.2){$*$};
\draw[shift={(0,0.4)}] (1.2,1.2) node{\tiny $1$};
\draw node (C) at (3,1.2){$\cdots$};
\node[draw, shape=circle, inner sep=1.8pt] (D) at (4.8 ,1.2){$*$};
\draw[shift={(0,0.4)}] (4.8,1.2) node{\tiny $1$};
\node[draw, shape=circle, inner sep=1.8pt] (E) at (6,0){$*$};
\draw[shift={(0,0.4)}] (6,0) node{\tiny $1$};
\node[draw, shape=circle, inner sep=1.8pt] (F) at (4.8,-1.2){$*$};
\draw[shift={(0,0.4)}] (4.8,-1.2) node{\tiny $1$};
\draw node (G) at (3,-1.2){$\cdots$};
\node[draw, shape=circle, inner sep=1.8pt] (H) at (1.2,-1.2){$*$};
\draw[shift={(0,0.4)}] (1.2,-1.2) node{\tiny $1$};

\draw (A) -- (B);
\draw (B) -- (C);
\draw (C) -- (D);
\draw (D) -- (E);
\draw (A) -- (H);
\draw (H) -- (G);
\draw (G) -- (F);
\draw (F) -- (E);
\end{tikzpicture},
\]
\end{enumerate}
\end{rem}

\begin{prop}\label{* for twisted cusp}
Let $(x \in X)$ be a normal surface singularity.
We assume that the following conditions are satisfied:
\begin{enumerate}[label=\textup{(\roman*)}]
\item $\ch(\kappa(x)) > 2$,
\item $(x \in X)$ is log canonical, and 
\item the dual graph of $(x \in X)$ is a twisted cusp with parameter $r \ge 1$ and length $n \ge 3$, that is, the dual graph is 
\[
\begin{tikzpicture}
\node[draw, shape=circle, inner sep=1.8pt] (X) at (-1.5,0){$*$};
\draw[shift={(0,0.4)}] (-1.5,0) node{\tiny $r$};
\node[draw, shape=circle, inner sep=1.8pt] (A) at (0,0){$*$};
\draw[shift={(0,0.4)}] (0,0) node{\tiny $2r$};
\node[draw, shape=circle, inner sep=1.8pt] (B) at (1.5,0){$*$};
\draw[shift={(0,0.4)}] (1.5,0) node{\tiny $2r$};
\draw node (C) at (3,0){$\cdots$};
\node[draw, shape=circle, inner sep=1.8pt] (D) at (4.5,0){$*$};
\draw[shift={(0,0.4)}] (4.5,0) node{\tiny $2r$};
\node[draw, shape=circle, inner sep=1.8pt] (E) at (6,0){$*$};
\draw[shift={(0,0.4)}] (6,0) node{\tiny $r$};

\draw (X) --node[auto=left]{\tiny $\langle 2r \rangle$} (A);
\draw (A) --node[auto=left]{\tiny $\langle 2r \rangle$} (B);
\draw (B) --node[auto=left]{\tiny $\langle 2r \rangle$} (C);
\draw (C) --node[auto=left]{\tiny $\langle 2r \rangle$} (D);
\draw (D) --node[auto=left]{\tiny $\langle 2r \rangle$} (E);
\end{tikzpicture},
\]
where the number of vertices is $n \ge 3$. (c.f. Notation \ref{notation}).
\end{enumerate}
Then there is an \'{e}tale morphism $\phi : X' \to X$ from a normal surface singularity $(x' \in X')$ whose dual graph is a cusp with parameter $1$:
\[
\begin{tikzpicture}
\node[draw, shape=circle, inner sep=1.8pt] (A) at (0,0){$*$};
\draw[shift={(0,0.4)}] (0,0) node{\tiny $1$};
\node[draw, shape=circle, inner sep=1.8pt] (B) at (1.2,1.2){$*$};
\draw[shift={(0,0.4)}] (1.2,1.2) node{\tiny $1$};
\draw node (C) at (3,1.2){$\cdots$};
\node[draw, shape=circle, inner sep=1.8pt] (D) at (4.8 ,1.2){$*$};
\draw[shift={(0,0.4)}] (4.8,1.2) node{\tiny $1$};
\node[draw, shape=circle, inner sep=1.8pt] (E) at (6,0){$*$};
\draw[shift={(0,0.4)}] (6,0) node{\tiny $1$};
\node[draw, shape=circle, inner sep=1.8pt] (F) at (4.8,-1.2){$*$};
\draw[shift={(0,0.4)}] (4.8,-1.2) node{\tiny $1$};
\draw node (G) at (3,-1.2){$\cdots$};
\node[draw, shape=circle, inner sep=1.8pt] (H) at (1.2,-1.2){$*$};
\draw[shift={(0,0.4)}] (1.2,-1.2) node{\tiny $1$};

\draw (A) -- (B);
\draw (B) -- (C);
\draw (C) -- (D);
\draw (D) -- (E);
\draw (A) -- (H);
\draw (H) -- (G);
\draw (G) -- (F);
\draw (F) -- (E);
\end{tikzpicture},
\]

In particular, the minimal resolution $f: Y \to X$ satisfies the $(**)$-condition.
\end{prop}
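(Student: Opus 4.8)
The plan is to realize $\phi$ as a base change of $X$ along a carefully chosen \emph{separable} field extension of $\kappa:=\kappa(x)$. The crux will be to prove that the field $L$ generated by the internal exceptional curves is \emph{Galois} over $\kappa$ --- this is where the hypothesis $\ch\kappa>2$ enters --- after which the base change to $L$ turns the twisted cusp into an honest cusp and Proposition~\ref{* for cusp} gives the $(**)$-condition for free.

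\emph{Step 1: the configuration.} Write $\Exc(f)=\sum_{i=1}^n E_i$ with $E_1-E_2-\cdots-E_n$ the chain, and put $K_i:=H^0(E_i,\sO_{E_i})$, so that $[K_1:\kappa]=[K_n:\kappa]=r$ and $[K_i:\kappa]=2r$ for $2\le i\le n-1$. Each $E_i$ is an integral projective $\kappa$-curve of arithmetic genus $0$ which is Gorenstein (a prime divisor on the regular scheme $Y$); from the dual graph, $(E_i\cdot E_{i+1})=2r$ for $1\le i\le n-1$, all other intersection numbers vanish, and --- since $(x\in X)$ is log canonical and $n\ge 3$ --- there is no triple point. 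For internal $i$, $\sO_Y(E_{i+1})|_{E_i}$ has degree $1$ over $K_i$ (Remark~\ref{rmk on dual graph}) and this divisor is effective, so $E_i$ has a $K_i$-rational point and $E_i\cong\PP^1_{K_i}$ (as in the proof of Lemma~\ref{genus zero}(ii)). For each $i$ set $P_i:=E_i\cap E_{i+1}$; restricting to whichever of $E_i,E_{i+1}$ is internal, the scheme $E_i\cap E_{i+1}$ is a single reduced $K$-rational point of that curve, hence $E_i\cap E_{i+1}=\Spec\kappa(P_i)$ with $[\kappa(P_i):\kappa]=2r$, and the local equations of $E_i$ and $E_{i+1}$ form a regular system of parameters at $P_i$; thus $(Y,\sum_j E_j)$ is SNC at every $P_i$. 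The $\kappa$-isomorphisms $K_i\xrightarrow{\ \sim\ }\kappa(P_i)\xleftarrow{\ \sim\ }K_{i+1}$ for consecutive internal curves identify $K_2\cong\cdots\cong K_{n-1}=:L$ (so $[L:\kappa]=2r$), and via $P_1$ (resp.\ $P_{n-1}$) one obtains $\kappa$-embeddings $\iota_1:K_1\into L$ and $\iota_n:K_n\into L$ of index $2$.

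\emph{Step 2: $L/\kappa$ is Galois (the main point).} Since $\ch\kappa>2$, the index-$2$ extensions $L/\iota_1(K_1)$ and $L/\iota_n(K_n)$ are separable; let $\sigma_1,\sigma_n\in\operatorname{Aut}_\kappa(L)$ be their nontrivial automorphisms, so $L^{\langle\sigma_1\rangle}=\iota_1(K_1)$ and $L^{\langle\sigma_n\rangle}=\iota_n(K_n)$. Applying Lemma~\ref{H^0-lemma} repeatedly along $E=E_1+\cdots+E_n$ (legitimate by the SNC property of Step 1) one computes $H^0(E_2+\cdots+E_{n-1},\sO)\cong L$, and then $H^0(E,\sO_E)\cong\iota_1(K_1)\cap\iota_n(K_n)$; but $H^0(E,\sO_E)=\kappa$ by \cite[Corollary 10.10]{Kol}, so $\iota_1(K_1)\cap\iota_n(K_n)=\kappa$, i.e.\ $L^{\langle\sigma_1,\sigma_n\rangle}=\kappa$. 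As $\langle\sigma_1,\sigma_n\rangle$ is a finite subgroup of $\operatorname{Aut}_\kappa(L)$, Artin's theorem on invariants shows that $L/\kappa$ is Galois with group $G:=\langle\sigma_1,\sigma_n\rangle$ of order $2r$ --- dihedral, being generated by two involutions. In particular $L/\kappa$, and hence each $K_j/\kappa$, is separable. This step (and the use of $\ch\kappa>2$ on which it rests) is the essential obstacle; everything else is bookkeeping.

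\emph{Step 3: the \'etale cover and conclusion.} By Lemma~\ref{residue extension}, separability of $L/\kappa$ produces an \'etale surjective $\phi:X'\to X$ from a (local) normal surface singularity $(x'\in X')$ with $\kappa(x')\cong L$; writing $f':Y':=Y\times_X X'\to X'$ and $\psi:Y'\to Y$, we have $K_{Y'/X'}=\psi^*K_{Y/X}$ (\cite[Lemma 2.6]{ST mixed}), so $f'$ is the minimal resolution of $X'$ and $(x'\in X')$ is log canonical, exactly as in the proof of Proposition~\ref{* for rational}. Now $\Exc(f')=\sum_i(E_i\times_\kappa L)$: each internal $E_i\cong\PP^1_L$ splits into $2r$ copies of $\PP^1_L$ indexed by $G$, while each end curve --- which is a form of $\PP^1$ over $K_j$ (by the classification of twisted cusps, Theorem~\ref{classification}, together with Lemma~\ref{separability genus zero}) and which acquires an $L$-rational point after base change along any embedding $K_j\into L$, because $[\kappa(P):\iota_j(K_j)]=2$ --- splits into $r$ copies of $\PP^1_L$ indexed by $G/\langle\sigma_j\rangle$. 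Since $\kappa(P_i)=L$, each $P_i$ becomes $2r$ distinct $L$-rational points, and since $\psi$ is \'etale, $(Y',\Exc(f'))$ is SNC with only double points. Tracing incidences --- consecutive internal levels are glued by the common $G$-index, while the two points over $P_1$ on the end-component indexed by a coset $\bar g$ lie on the two internal-level components indexed by the two elements of $\bar g$, which are distinct as $\sigma_1\ne e$ --- one checks that every component meets exactly two others, with no loops and no multiple edges; since $\Exc(f')$ is connected (Zariski connectedness of the fibre of a resolution of a normal local singularity) and consists of copies of $\PP^1_L$, its dual graph is a single cycle of length $\ge 3$ with vertex data $(1,0,\ast)$, i.e.\ a cusp of parameter $1$ (of length $2r(n-1)$). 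Finally, since $(x'\in X')$ is log canonical with a cusp dual graph of length $\ge 3$, Proposition~\ref{* for cusp} shows that $f'$ satisfies the $(*)$-condition; as $X'$ is local, $\phi$ witnesses the $(**)$-condition for $f:Y\to X$, completing the proof.
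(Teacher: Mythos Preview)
Your proof is correct and takes a genuinely different, more constructive route than the paper's. The paper argues indirectly: it first passes to an \'etale cover $X'\to X$ over which every exceptional curve becomes geometrically irreducible, notes that $(x'\in X')$ remains log canonical and non-rational with at least three exceptional curves, and then re-invokes the classification (Theorem~\ref{classification}\,(3)) to force the dual graph of $X'$ to be either a cusp or a twisted cusp; the twisted case is excluded because one of its curves would carry an $H^0$-field of even degree over $\kappa(x')$, which cannot be purely inseparable in characteristic $>2$ and hence cannot remain geometrically connected, contradicting the choice of cover; finally Remark~\ref{rmk on irr cusp} forces the parameter to be $1$. Your argument, by contrast, parallels the proof of Proposition~\ref{* for cusp}: you use Lemma~\ref{H^0-lemma} and $H^0(E,\sO_E)=\kappa$ to show directly that the field $L$ carried by the internal curves is Galois over $\kappa$ with dihedral group $\langle\sigma_1,\sigma_n\rangle$, then pass to the explicit \'etale cover with residue field $L$ and compute the resulting configuration by hand. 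Your approach avoids re-entering the classification theorem as a black box, exhibits the precise length $2r(n-1)$ of the resulting cusp, and makes the role of the hypothesis $\ch(\kappa)>2$ completely transparent (it is exactly what produces the two involutions $\sigma_1,\sigma_n$ via Artin's theorem); the paper's proof is shorter but less explicit.
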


\begin{proof}
As in the proof of Proposition \ref{* for rational}, we can take an \'{e}tale morphism $\phi: X' \to X$ from a normal surface singularity $(x' \in X')$ such that every exceptional prime divisor of the minimal resolution of $X'$ is geometrically irreducible over $\kappa(x')$.
It is enough to show that the dual graph $\Gamma$ of $(x' \in X')$ is a cusp with parameter $1$.

We first note that $(x' \in X')$ is again log canonical and not rational since $\phi$ is \'{e}tale.
Moreover, since the base change $f' : Y' \to X'$ of $f$ is the minimal resolution of $(x' \in X')$, the number of the vertexes of $\Gamma$ is at least three.
Therefore, it follows from the classification (Theorem \ref{classification} (3) below) that $\Gamma$ is either a cusp with length $\ge 3$ or a twisted cusp with length $\ge 3$.
In the former case, since every exceptional prime divisor in $Y'$ is geometrically irreducible, the parameter is one by Remark \ref{rmk on irr cusp}.

In the latter case, take an exceptional prime divisor $E_i \subseteq Y'$ such that the extension degree $[H^0(E_i, \sO_{E_i}) : \kappa(x')]$ is divisible by two.
It then follows from the assumption $\ch(\kappa(x)) \neq 2$ that $H^0(E_i, \sO_{E_i})$ is not purely inseparable over $\kappa(x')$.
Therefore, $E_i$ is not geometrically connected over $\kappa(x')$, which is a contradiction to the choice of $\phi$.
\end{proof}

\begin{prop}\label{* for cusp2}
Let $(x \in X)$ be a normal surface singularity.
We assume the following conditions are satisfied:
\begin{enumerate}[label=\textup{(\roman*)}]
\item $\ch(\kappa(x)) > 2$,
\item $(x \in X)$ is log canonical, and
\item the dual graph of $(x \in X)$ is a cusp with parameter $r \ge 1$ and length $2$, that is, the dual graph is
\[
\xygraph{
*=[o]++[Fo]{}([]!{+(0,+.25)} {{}^r})
-^{\langle 2r \rangle} [r] 
*=[o]++[Fo]{}([]!{+(0,+.25)} {{}^r})
}.\]
\end{enumerate}
Then the minimal resolution $f : Y \to X$ satisfies the $(**)$-condition.
\end{prop}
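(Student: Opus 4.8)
The plan is to prove the stronger statement that the minimal resolution $f\colon Y\to X$ itself satisfies the $(*)$-condition; taking $X'=X$ then gives the $(**)$-condition. Write $\Exc(f)=E_1+E_2$ and $K_i:=H^0(E_i,\sO_{E_i})$. Since $(x\in X)$ is log canonical with a cusp dual graph of length two, $f$ is a log resolution, each $E_i$ is a regular projective curve over $\kappa(x)$ with $g(E_i)=0$ and $[K_i:\kappa(x)]=r$, and $H^0(E_1+E_2,\sO_{E_1+E_2})=\kappa(x)$ by \cite[Corollary 10.10]{Kol}. As $f$ is a log resolution the scheme $E_1\cap E_2$ is reduced and has degree $(E_1\cdot E_2)=2r$ over $\kappa(x)$; moreover $K_i$ embeds into $\kappa(P)$ for every $P\in E_1\cap E_2$, so $[\kappa(P):\kappa(x)]\ge r$. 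Hence exactly one of the following occurs: either (A) $E_1\cap E_2=\{P_1,P_2\}$ with $K_1\cong\kappa(P_j)\cong K_2$ for $j=1,2$, or (B) $E_1\cap E_2=\{P\}$ with $M:=\kappa(P)$ a degree-two extension of both $K_1$ and $K_2$.

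In case (A) I would, following the proof of Proposition \ref{* for cusp}, apply Lemma \ref{H^0-lemma} to identify $H^0(E_1+E_2,\sO_{E_1+E_2})$ with the fiber product of $K_1$ and $K_2$ over $\kappa(P_1)\times\kappa(P_2)$; eliminating one coordinate via the restriction isomorphisms $K_1\xrightarrow{\ \sim\ }\kappa(P_j)\xleftarrow{\ \sim\ }K_2$ exhibits this ring as the fixed field $K_2^{\langle\sigma\rangle}$ for a suitable $\sigma\in\mathrm{Aut}_{\kappa(x)}(K_2)$. Since $H^0(E_1+E_2,\sO_{E_1+E_2})=\kappa(x)$, Artin's lemma shows that $K_2/\kappa(x)$ — and hence $K_1/\kappa(x)$, by the above isomorphism — is (cyclic) Galois, in particular separable. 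Because $P_1$ is a divisor of degree one on $E_1$ relative to $K_1$, the invertible sheaf $\sO_{E_1}(P_1)$ has odd degree, so Lemma \ref{genus zero}(ii) gives that $E_1$, and likewise $E_2$, is smooth over $\kappa(x)$; and $E_1\cap E_2=\Spec(K_1\times K_1)$ is \'etale over $\kappa(x)$. Thus $f$ satisfies the $(*)$-condition in this case.

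Case (B) is the crux: the point is to rule out that $K_i/\kappa(x)$ is inseparable — an inseparability here would make $E_i$ non-smooth over $\kappa(x)$, and it could not be repaired by any \'etale base change, which induces only separable residue extensions. This is where $\ch(\kappa(x))>2$ enters. The degree-two extensions $M/K_1$ and $M/K_2$ are then separable, hence Galois with nontrivial automorphisms $\tau$ and $\sigma$; since $\tau$ and $\sigma$ fix $\kappa(x)\subseteq K_1\cap K_2$, they lie in the finite group $\mathrm{Aut}_{\kappa(x)}(M)$. By Lemma \ref{H^0-lemma} we have $\kappa(x)=H^0(E_1+E_2,\sO_{E_1+E_2})\cong K_1\cap K_2=M^{\tau}\cap M^{\sigma}=M^{\langle\tau,\sigma\rangle}$, so Artin's lemma shows that $M/\kappa(x)$ is Galois; in particular $K_1$, $K_2$ and $M$ are all separable over $\kappa(x)$. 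Since $E_i$ is a regular curve of genus zero with $H^0(E_i,\sO_{E_i})=K_i$, it is a smooth conic over $K_i$ by \cite[Lemma 10.6]{Kol} (using $\ch(\kappa(x))>2$), hence smooth over $\kappa(x)$ by separability; and $E_1\cap E_2=\Spec M$ is \'etale over $\kappa(x)$. So $f$ again satisfies the $(*)$-condition, and we are done. In summary, the only real obstacle is the separability of the residue-field extensions in case (B), which the Artin fixed-field argument together with $\ch(\kappa(x))>2$ resolves; everything else is bookkeeping with Lemmas \ref{H^0-lemma} and \ref{genus zero}.
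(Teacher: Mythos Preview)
Your proof is correct and in fact establishes the stronger conclusion that $f$ already satisfies the $(*)$-condition. One step deserves fuller justification: the assertion that ``$f$ is a log resolution'' (equivalently, that the scheme $E_1\cap E_2$ is reduced) does not follow from the dual-graph data alone. It holds because for a cusp one has $K_Y+E_1+E_2\equiv f^*K_X$ (Theorem~\ref{classification}(3)), so $(Y,E_1+E_2)$ is log canonical, and then \cite[Theorem~2.31]{Kol} forces the pair to be SNC. This is exactly the paper's third case, which you have absorbed into a single preliminary claim; you should supply this reference.

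Where your argument genuinely diverges from the paper is in case~(B). In case~(A) both proofs run the same Artin fixed-field computation from Proposition~\ref{* for cusp}. For case~(B) (a single transverse intersection point of residue degree $2r$), the paper instead invokes the twisted-cusp method of Proposition~\ref{* for twisted cusp}, passing to an \'etale cover on which the exceptional components become geometrically irreducible and obtaining only the $(**)$-condition. Your route is more direct: since $[M:K_i]=2$ is prime to $p$, the involutions $\tau,\sigma\in\mathrm{Aut}_{\kappa(x)}(M)$ together with $H^0(E_1+E_2,\sO_{E_1+E_2})=\kappa(x)$ and Artin's lemma force $M/\kappa(x)$ to be Galois, so $K_1$, $K_2$ and $M$ are all separable over $\kappa(x)$. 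This immediately gives smoothness of each $E_i$ (a regular conic over $K_i$ is smooth when $\ch(K_i)\neq 2$) and of $E_1\cap E_2=\Spec M$ over $\kappa(x)$, hence $(*)$ without any base change. The payoff is a sharper conclusion and no dependence on Proposition~\ref{* for twisted cusp}; the paper's approach, by contrast, makes the structural parallel with the twisted-cusp case explicit.
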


\begin{proof}
Let $f: Y \to X$ be a minimal resolution and $\Exc(f) = E_1 \cup E_2$ be the irreducible decomposition.
Since we have
\[
2r = (E_1 \cdot E_2) = \sum_{P \in E_1 \cap E_2} \dim_{\kappa(x)} \sO_{E_1 \cap E_2, P}
\]
and $\dim_{\kappa(x)} \sO_{E_1 \cap E_2 , P}$ is divisible by $r$, one of the following holds:
\begin{enumerate}[label=\textup{$(\arabic*)$}]
\item $E_1$ and $E_2$ transversally intersects at two points $P,Q$ such that 
\[
[\kappa(P):\kappa(x)] = [\kappa(Q): \kappa(P) ] = r.
\]
\item $E_1$ and $E_2$ transversally intersects at a single point $P$ such that 
\[
[\kappa(P):\kappa(x)] =2r.
\]
\item The scheme theoretic intersection $E_1 \cap E_2$ is non-reduced and $E_1 \cap E_2$ is a single point $P$ such that
\[
[\kappa(P): \kappa(x)] =r.
\]
\end{enumerate}

In the first case, the proof is similar to that of Proposition \ref{* for cusp}.
In the second case, the assertion follows from the similar argument as in Proposition \ref{* for twisted cusp}.

We consider the third case.
Since we have $K_{Y}+ (E_1+ E_2) = f^*K_X $ (c.f. Theorem \ref{classification} (3) below), the pair $(Y, E_1+E_2)$ is log canonical at $P$.
Therefore, it follows from \cite[Theorem 2.31]{Kol} (see also Remark \ref{rem: node}) that the pair $(Y,E_1+E_2)$ is SNC at $P$, which is a contradiction since $E_1 \cap E_2$ is non-reduced.
\end{proof}

\subsection{Simple elliptic singularities}

In this subsection, we give a sufficient condition for simple elliptic singularities to satisfy the $(**)$-condition.
We recall that a normal surface singularity is \emph{simple elliptic} if the dual graph is the following:
\[
\xygraph{
*=[o]++[Fo]{*}([]!{+(0,+.25)} {{}^1})([]!{+(0,-.3)} {{}_1}),
}
\]

\begin{defn}
Let $(x \in X)$ be a simple elliptic singularity and $E$ be the exceptional prime divisor of the minimal resolution.
We say that $(x \in X)$ is \emph{simple regular elliptic} if $E$ is regular, and is \emph{simple nodal elliptic} otherwise.
\end{defn}

\begin{prop}\label{* for simple elliptic}
Let $(x \in X)$ be a normal surface singularity.
We assume the following conditions are satisfied:
\begin{enumerate}[label=\textup{(\roman*)}]
\item $\ch(\kappa(x)) > 3$ and
\item $(x \in X)$ is log canonical and simple regular elliptic.
\end{enumerate}
Then the minimal resolution $f: Y \to X$ satisfies the $(*)$-condition.
\end{prop}

\begin{proof}
The assertion follows from Proposition \ref{smoothness of genus one}.
\end{proof}

\begin{prop}\label{* for simple nodal elliptic}
Let $(x \in X)$ be a normal surface singularity.
We assume the following conditions are satisfied:
\begin{enumerate}[label=\textup{(\roman*)}]
\item $\ch(\kappa(x)) > 3$,
\item $(x \in X)$ is log canonical and simple nodal elliptic.
\end{enumerate}
Then $X$ admits a resolution of singularities which satisfies the $(**)$-condition.
\end{prop}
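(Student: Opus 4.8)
The plan is to reduce to a cusp singularity with parameter $1$ by an \'etale base change, exactly as in Propositions \ref{* for twisted cusp} and \ref{* for cusp2}, and then to handle the resulting cusp-of-length-one (i.e.\ the node on the exceptional curve) by hand. First I would take, as in the proof of Proposition \ref{* for rational}, an \'etale surjective morphism $\phi: X' \to X$ from a normal surface singularity $(x' \in X')$ so that the exceptional prime divisor $E'$ of the minimal resolution $f': Y' \to X'$ is geometrically irreducible over $\kappa(x')$; this is possible by Lemma \ref{residue extension}, and $f'$ is again the minimal resolution because $K_{Y'/X'}$ is the pullback of $K_{Y/X}$. Since $\phi$ is \'etale, $(x' \in X')$ is again log canonical and simple nodal elliptic, so $E'$ is a geometrically integral curve over $\kappa(x')$ with $g(E') = 1$ having exactly one non-regular point, which is a node. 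By Corollary \ref{separability genus one} together with Remark \ref{rmk on node}, after possibly enlarging $\kappa(x')$ by a further separable \'etale extension I may assume the node of $E'$ has residue field $\kappa(x')$ and that $E'$ is geometrically reduced; by Proposition \ref{genus one} the non-regular point $Q \in E'$ is then $\kappa(x')$-rational and $E' \setminus \{Q\}$ is smooth over $\kappa(x')$.

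So after these reductions I am left to show that the minimal resolution $f': Y' \to X'$ admits a further blow-up $g: W \to X'$ satisfying the $(*)$-condition. The key step is to blow up $Y'$ at the single node $Q \in E'$. Writing $h: W \to Y'$ for this blow-up, with exceptional divisor $F$, I would show: (a) the strict transform $\tilde{E'}$ of $E'$ is now smooth over $\kappa(x')$ (blowing up a node of a curve at an SNC-for-curves point on a regular surface separates the two branches and the strict transform is regular; since $\kappa(Q) \cong \kappa(x')$ is separable, regular implies smooth by \cite[Lemma 00TV]{Sta}); (b) $F \cong \PP^1_{\kappa(Q)} \cong \PP^1_{\kappa(x')}$, which is smooth over $\kappa(x')$; and (c) $\tilde{E'} \cap F$ consists of two $\kappa(x')$-rational points where the crossing is transverse, so each such intersection is smooth over $\kappa(x')$. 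This requires knowing that the node, viewed on the regular surface $Y'$, is given by an equation whose leading form factors into two distinct linear forms over $\kappa(x')$ after the separable extension — precisely the content of Remark \ref{rmk on node}(3) for the "$\kappa \times \kappa$" case — which is why the earlier reduction to $\kappa(Q) \cong \kappa(x')$ with $E'$ geometrically reduced is needed, and why $\ch(\kappa(x)) > 2$ (in fact $>3$, for the genus-one analysis via Proposition \ref{genus one}) enters. Then $g := f' \circ h: W \to X'$ is a resolution whose exceptional locus is $\tilde{E'} + F$, and I have just verified that it satisfies the $(*)$-condition. Finally, pulling back along $\phi: X' \to X$ (an \'etale surjection, so \cite[Lemma 2.6]{ST mixed} applies and the relevant discrepancy computation is preserved), the corresponding blow-up of $Y$ at the image of the node gives a resolution of $X$ satisfying the $(**)$-condition by definition.

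The main obstacle I anticipate is the base-change bookkeeping in the first paragraph: to apply Proposition \ref{genus one} I need a $\kappa(x')$-rational regular point on $E'$, not merely that $E'$ is geometrically integral, and I need the node to become $\kappa(x')$-rational with split branches rather than merely separable. Both are achieved by absorbing finitely many separable \'etale extensions into $\phi$ (using Lemma \ref{residue extension} repeatedly, and that a composite of \'etale surjections is an \'etale surjection), but one must check that these extensions do not spoil the hypotheses — that $(x' \in X')$ stays log canonical simple nodal elliptic and that $f'$ stays the minimal resolution — which again follows from $K_{Y'/X'}$ being the pullback of $K_{Y/X}$ via \cite[Lemma 2.6]{ST mixed}. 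Once the node is split and $\kappa(x')$-rational, steps (a)–(c) are an explicit local computation on the regular local ring $\sO_{Y', Q}$, and the smoothness assertions follow from separability of the residue fields involved, so no genuine difficulty remains there.
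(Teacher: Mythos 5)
Your proposal is correct and follows essentially the same route as the paper: reduce by an \'etale base change to the case where $E$ is geometrically irreducible, deduce geometric reducedness via Corollary \ref{separability genus one}, apply Proposition \ref{genus one} to locate a single $\kappa(x)$-rational non-regular point, and blow it up (the paper leaves the verification of the $(*)$-condition for this blow-up as ``straightforward,'' which is exactly the local computation you spell out; splitting the branches of the node is not actually needed, since a transverse intersection point with separable residue field is already smooth over $\kappa(x)$). The one assertion you use without justification is that every non-regular point of $E$ is a node: this is not part of the definition of ``simple nodal elliptic'' and is obtained in the paper from the fact that $(Y,E)$ is log canonical (since $\Delta_Y=E$ by Theorem \ref{classification}~(3)) together with \cite[Theorem 2.31]{Kol}.
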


\begin{proof}
Let $E$ be the exceptional divisor of the minimal resolution $f: Y \to X$.
We first consider the case where $E$ is not geometrically irreducible.
As in the proof of Proposition \ref{* for rational}, we may take an \'etale cover $X' \to X$, such that the exceptional locus of the minimal resolution $f \times_X \id_{X'} : Y \times_X X' \to X'$ of $X'$ is not irreducible.
Noting that $X'$ is again log canonical and non-rational, the dual graph of $X'$ is one of the graph in Figure \ref{classification lc non rational} below (see Theorem \ref{classification} (3) below).
Therefore, it follows from Propositions \ref{* for cusp}, \ref{* for twisted cusp}, \ref{* for cusp2} that the minimal resolution $f$ satisfies $(**)$-condition.

We next consider the case where $E$ is geometrically irreducible over $\kappa(x)$.
Since the pair $(Y,E)$ is log canonical, every singular point of $E$ is a node by \cite[Theorem 2.31]{Kol} (see also Remark \ref{rem: node}).
It then follows from Corollary \ref{separability genus one} that $E$ is geometrically reduced over $\kappa(x)$.
Therefore, by Proposition \ref{genus one}, there exists a non-regular $\kappa(x)$-rational point $P \in E$ such that $E$ is smooth over $\kappa(x)$ outside $P$.
Let $g: Z \to Y$ be the blowing up along $P$.
Then it is straightforward to show that the composite $f \circ g : Z \to X$ satisfies the $(*)$-condition.
\end{proof}

\begin{cor}\label{* for lc}
Let $(x \in X)$ be a normal surface singularity.
We assume the following conditions are satisfied:
\begin{enumerate}[label=\textup{(\roman*)}]
\item $\ch(\kappa(x)) > 3$ and
\item $(x \in X)$ is log canonical.
\end{enumerate}
Then there is a resolution $f: Y \to X$ which satisfies the $(**)$-condition.
\end{cor}

\begin{proof}
The assertion follows from the classification (Theorem \ref{classification} (2), (3) below) combining with Propositions \ref{* for rational}, \ref{* for cusp}, \ref{* for twisted cusp}, \ref{* for cusp2}, \ref{* for simple nodal elliptic} and \ref{* for simple elliptic}.
\end{proof}

\begin{cor}\label{geometric lc}
Let $X$ be a $2$-dimensional geometrically normal variety over a field $k$ of characteristic $p>3$ and $x_1, \dots, x_n \in X$ be the non-smooth points of $X$ over $k$.
Suppose that $X$ is log canonical at $x_i$ and the residue field $\kappa(x_i)$ of $X$ at $x_i$ is separable over $k$ for every $i$.
Then $X$ is geometrically log canonical over $k$.
\end{cor}

\begin{proof}
This follows from Lemma \ref{criterion2} and Corollary \ref{* for lc}.
\end{proof}

\section{Bertini type theorem for log canonical (resp.~klt) threefolds}\label{section bertini}

In this section, we prove the main theorem of this paper.

\begin{prop}\label{family}
Let $\pi: \mathcal{X} \to T$ be a quasi-projective morphism between varieties over an algebraically closed field $k$.
We assume the following conditions are satisfied:
\begin{enumerate}[label=\textup{(\roman*)}]
\item The geometric generic fiber $\mathcal{X}_{\overline{\eta}} : = \mathcal{X} \times_T \Spec(\overline{K(T)})$ admits a log resolution $g: Y \to \mathcal{X}_{\overline{\eta}}$ which is a projective morphism.
\item The generic fiber $\mathcal{X}_{\eta} : = \mathcal{X} \times_T \Spec(K(T))$ is geometrically log canonical (resp. geometrically klt) over $K(T)$.
\end{enumerate}
Then the fiber $\mathcal{X}_t$ is log canonical (resp.~klt) for a general closed point $t \in T(k)$.
\end{prop}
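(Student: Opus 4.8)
The plan is a spreading-out argument. Since the log resolution $g : Y \to \mathcal{X}_{\overline{\eta}}$ of hypothesis (i) is of finite type over $\overline{K(T)}$, which is the filtered union of the finite subextensions $L$ of $\overline{K(T)}/K(T)$, it is already defined over one such $L$: there is a projective birational morphism $g_L : Y_L \to \mathcal{X}_{\eta}\times_{K(T)}L =: \mathcal{X}_{\eta,L}$ whose base change to $\overline{K(T)}$ recovers $g$. By (ii) the scheme $\mathcal{X}_{\eta,L}$ is normal, $\Q$-Gorenstein and klt (resp.~log canonical) --- replacing it by a connected component if necessary, I take it to be integral --- and, enlarging $L$, I may also assume that every irreducible component of $\Exc(g_L)$ is geometrically irreducible over $L$, so that $g_L$ is again a log resolution. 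I would then choose a finite dominant morphism $\rho : T' \to T$ of $k$-varieties with $K(T') = L$ (for instance the normalization of $T$ in $L$), and set $\mathcal{X}' := \mathcal{X}\times_T T'$ with projection $\pi' : \mathcal{X}' \to T'$; thus the generic fibre of $\pi'$ is $\mathcal{X}_{\eta,L}$.

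Next I would spread $g_L$ out over $T'$. Using standard constructibility and generic-flatness results, together with the compatibility of dualizing complexes with base change (\cite[Lemma 0E9U]{Sta}), after shrinking $T'$ to a dense open $U'$ I get a projective birational morphism $h : \mathcal{Y}' \to \mathcal{X}'$ restricting to $g_L$ over $K(T')$, such that for every $t' \in U'$ the fibre $\mathcal{X}'_{t'}$ is normal and $\Q$-Gorenstein with $\omega_{\mathcal{X}'/U'}$ restricting to a dualizing sheaf on it, the scheme $\mathcal{Y}'_{t'}$ is regular and $h_{t'} : \mathcal{Y}'_{t'}\to \mathcal{X}'_{t'}$ is proper birational, and $\Exc(h)$ is a relatively SNC divisor over $U'$ each of whose irreducible components $E_j'$ is flat over $U'$ with geometrically irreducible fibres; in particular $h_{t'}$ is a log resolution with exceptional prime divisors exactly the $E_{j,t'}' := (E_j')_{t'}$. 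Shrinking $U'$ once more I also arrange that $K_{\mathcal{Y}'/\mathcal{X}'} := K_{\mathcal{Y}'/U'} - h^{*}K_{\mathcal{X}'/U'}$ restricts to $K_{\mathcal{Y}'_{t'}/\mathcal{X}'_{t'}}$ on every fibre $t' \in U'$.

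The heart of the argument is then that discrepancies are locally constant on $T'$. Fix a component $E_j'$ of $\Exc(h)$ and let $a_j$ be the coefficient of $K_{\mathcal{Y}'/\mathcal{X}'}$ along $E_j'$. Restricting to the generic fibre, $K_{\mathcal{Y}'/\mathcal{X}'}$ becomes $K_{Y_L/\mathcal{X}_{\eta,L}}$, so $a_j$ is the discrepancy of $\mathcal{X}_{\eta,L}$ along the corresponding exceptional divisor of $g_L$; by (ii) and Remark \ref{rmk on discrepancy} applied to $g_L$ this is $\ge -1$ (resp.~$> -1$). Restricting instead to a special fibre $t' \in U'$, and using that $E_j'$ and $E_{j,t'}'$ are reduced and irreducible so that orders of vanishing may be compared, I obtain $a_{E_{j,t'}'}(\mathcal{X}'_{t'}) = a_j \ge -1$ (resp.~$> -1$). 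As the $E_{j,t'}'$ exhaust the exceptional prime divisors of the log resolution $h_{t'}$, Remark \ref{rmk on discrepancy} yields that $\mathcal{X}'_{t'}$ is log canonical (resp.~klt) for every closed point $t' \in U'$. I expect this step to be the \emph{main technical obstacle}: one must coordinate the shrinkings of $U'$ so that the single divisor $K_{\mathcal{Y}'/\mathcal{X}'}$ simultaneously computes the discrepancies of the generic fibre and restricts correctly to all special fibres, which is precisely where base change of dualizing complexes, $\Q$-Gorenstein-ness, and constructibility of the normal and $\Q$-Gorenstein loci all enter.

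Finally I would transfer the statement from $T'$ back to $T$. The morphism $\rho$ is finite, hence closed, and dominant between integral $k$-varieties of the same dimension, hence surjective; therefore $\rho(T'\setminus U')$ is a closed subset of $T$ of dimension $<\dim T$, and $V := T\setminus \rho(T'\setminus U')$ is a dense open subset. For a closed point $t \in V$ one has $\kappa(t) = k$, and the fibre $\rho^{-1}(t) = T'\times_T\Spec k$ is a non-empty finite $k$-scheme, hence has a $k$-point $t'$, which lies in $U'$ by the construction of $V$; since $\kappa(t') = k = \kappa(\rho(t'))$ one gets $\mathcal{X}'_{t'} = \mathcal{X}\times_T\Spec\kappa(t') = \mathcal{X}_{\rho(t')} = \mathcal{X}_t$. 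Hence $\mathcal{X}_t$ is log canonical (resp.~klt) by the previous step, for every closed point $t$ in the dense open $V \subseteq T$ --- which is exactly the claim for a general closed point $t \in T(k)$.
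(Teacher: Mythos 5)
Your proposal is correct and follows essentially the same route as the paper's proof: descend the log resolution to a finite extension $L$ of $K(T)$, pass to the normalization of $T$ in $L$, spread the resolution out over a dense open subset so that the exceptional strata are smooth over the base, and compare coefficients of the relative canonical divisor on the generic and special fibres (the step you flag as the main obstacle is exactly the paper's Claim, proved via base change of dualizing complexes and reflexive hulls). Your final paragraph transferring the conclusion from $T'$ back to $T$ is a point the paper leaves implicit, and is handled correctly.
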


\begin{proof}
Since $\mathcal{X}_{\overline{\eta}}$ is quasi-projective over $\overline{K(T)}$ and $g$ is projective, there exist a finite field extension $L$ of $K(T)$ and a Cartesian diagram
\[
\xymatrix{
Y \ar^-{g}[r] \ar[d] & \mathcal{X}_{\overline{\eta}} \ar[d] \\
Y' \ar^-{h}[r] & X_{L} : = \mathcal{X}_{\eta} \times_{K(T)} L
,}\]
where $h$ is projective.
It follows from the faithfully flat descent of regularity (\cite[Theorem 23.7]{Mat}) that $h$ is a resolution of $X_L$.
After replacing $L$ by its finite extension, we may assume that every stratum $Z$ of $\Exc(h)$ with a reduced structure is geometrically integral over $L$.
We note that $Z$ is smooth over $L$ since the base change $Z \times_L \overline{K(T)}$ of a stratum $Z$ is isomorphic to some stratum of $\Exc(g)$.
After replacing $T$ by its normalization in $L$ and $\mathcal{X}$ by $\mathcal{X} \times_T S$, we may assume that there exists a projective log resolution $h: Y' \to \mathcal{X}_{\eta}$ such that the pair $(Y', \Exc(h))$ is geometrically SNC over $K(T)$.

On the other hand, it follows from the fact that $\mathcal{X}$ is quasi-projective over $T$ and $h$ is projective that there exists a non-empty open subset $U \subseteq T$ and a Cartesian diagram
\[
\xymatrix{
Y' \ar^-{h}[r] \ar[d] & \mathcal{X}_{\eta} \ar[d] \\
\mathcal{Y} \ar^-{\mu}[r] & f^{-1}(U),
}\]
where $\mu$ is a projective morphism.
Therefore, after shrinking $T$, we may assume that there exists a log resolution $\mu: \mathcal{Y} \to \mathcal{X}$ such that every stratum of the pair $(\mathcal{Y}, \Exc(\mu))$ is smooth over $T$ and maps surjectively onto $T$.
Moreover, since the generic fiber $\mathcal{X}_{\eta}$ is assumed to be $\Q$-Gorenstein and normal, we may shrink $T$ so that $T$ is smooth and $\mathcal{X}$ is normal and $\Q$-Gorenstein.

Since the locus where the fiber is geometrically normal is constructible (\cite[Proposition 9.9.4]{EGAIV3}), the fiber $\mathcal{X}_t$ is normal for a general closed point $t \in T$.
For such $t$, it follows from \cite[Exercise II. 5.16 (d) and Theorem 8.17]{Har} and Remark \ref{rem: dualizing} (1) that we have
\begin{eqnarray*}
\omega_{\mathcal{X}_t}  \cong  (\omega_{\mathcal{X}}|_{\mathcal{X}_t})^{**},
\end{eqnarray*}
where $^{**}$ denotes the reflexive hull.
Similarly, we have $\omega_{\mathcal{Y}_t}  \cong \omega_{\mathcal{Y}}|_{\mathcal{Y}_t}$, which implies that 
\[
(K_{\mathcal{Y}/\mathcal{X}})|_{\mathcal{Y}_t} = K_{\mathcal{Y}_t/\mathcal{X}_t}.
\]


Since the support of the relative canonical divisor $K_{\mathcal{Y}/\mathcal{X}}$ is contained in $\Exc(\mu)$, every irreducible component of the support is smooth over $T$ and maps surjectively onto $T$.
Therefore, the set of all coefficients of $K_{\mathcal{Y}_t/\mathcal{X}_t}$ coincides with that of $K_{\mathcal{Y}/\mathcal{X}}$.
The assertion now follows from Remark \ref{rmk on discrepancy}.
\end{proof}

\begin{thm}\label{bertini}
Let $X \subseteq \PP^N_k$ be a three dimensional normal quasi-projective variety over an algebraically closed field $k$ of characteristic $p>3$.
If $X$ has only log canonical (resp.~klt) singularities, then so does a general hyperplane section $H$ of $X$.
\end{thm}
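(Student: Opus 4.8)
The plan is to package all hyperplane sections of $X$ into a single family over the dual projective space and to reduce the statement to Proposition \ref{family}. Put $T := (\PP^N_k)^*$ and let $\mathcal{X} \subseteq X \times_k T$ be the incidence subscheme; it is a variety over $k$ (a $\PP^{N-1}$-bundle over $X$), and the second projection $\pi : \mathcal{X} \to T$ is a quasi-projective morphism whose closed fibre over $[H] \in T(k)$ is the hyperplane section $X \cap H$. Write $K := K(T)$ for the function field of $T$, and let $\mathcal{X}_\eta$ (resp.~$\mathcal{X}_{\overline\eta}$) denote the generic (resp.~geometric generic) fibre of $\pi$. Since the conclusion of Proposition \ref{family} applied to $\pi$ says exactly that $X \cap H$ is log canonical (resp.~klt) for a general $[H]\in T(k)$, it suffices to verify the two hypotheses of that proposition: (i) $\mathcal{X}_{\overline\eta}$ admits a projective log resolution, and (ii) $\mathcal{X}_\eta$ is geometrically log canonical (resp.~geometrically klt) over $K$.

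Hypothesis (i) is easy: the natural morphism $p : \mathcal{X}_\eta \to X$ has geometrically regular fibres (they are spectra of purely transcendental extensions of the residue fields of $X$), and $\mathcal{X}_\eta$ is geometrically normal over $K$ by the Bertini theorem for normality \cite{CGM}; in particular $\mathcal{X}_{\overline\eta}$ is a normal surface over the algebraically closed field $\overline K$, and a normal surface over an algebraically closed field always admits a projective log resolution by resolution of surface singularities, which is available in every characteristic. Moreover, since $p$ is a regular morphism, $X$ being log canonical (resp.~klt) forces $\mathcal{X}_\eta$ to be log canonical (resp.~klt) as well.

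For hypothesis (ii) in the klt case I would invoke Corollary \ref{geometric klt} with the $K$-variety $S := \mathcal{X}_\eta$: it is $2$-dimensional, geometrically normal over $K$, and klt by the previous paragraph. One must see that the non-smooth locus of $S/K$ is a finite set of closed points $x_1,\dots,x_n$ lying over $\Sing(X)$, with $\kappa(x_i)$ separable over $K$. The finiteness holds because $X$ is a normal threefold, so $\Sing(X)$ has dimension $\le 1$, and a general hyperplane section of $X$ is smooth away from $\Sing(X)$ by the classical Bertini theorem over the algebraically closed field $k$ --- a property inherited by passing to the generic fibre; the separability of each $\kappa(x_i)$ over $K$ comes out of the same Bertini-type analysis of the family $\mathcal{X}\to T$ (the residue fields occurring along the generic hyperplane section are obtained from residue fields of $X$ by adjoining algebraically independent quantities). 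Hence Corollary \ref{geometric klt} gives that $\mathcal{X}_\eta$ is geometrically klt over $K$, and Proposition \ref{family} finishes the klt case.

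For the log canonical case one argues in the same way, replacing Corollary \ref{geometric klt} by Lemma \ref{criterion2} combined with Corollary \ref{* for lc} (applied at each $x_i$). The only singularity type not handled unconditionally by Corollary \ref{* for lc} is the simple regular elliptic one --- Proposition \ref{* for simple elliptic} requires $\kappa(x_i)$ to have transcendence degree one over a perfect subfield, which is false in our setting --- and this is the main obstacle. I would resolve it by a variant of Proposition \ref{* for simple elliptic} in which the transcendence-degree hypothesis is replaced by the genericity of the hyperplane: if $(x_i \in \mathcal{X}_\eta)$ is simple regular elliptic, then after an \'etale base change making the exceptional divisor $E$ geometrically irreducible (as in the proof of Proposition \ref{* for rational}), $E$ is a geometrically irreducible genus-one curve over $\kappa(x_i)$ whose Weierstrass coefficients involve the algebraically independent coefficients of the generic hyperplane; this forces $E$ to be geometrically reduced, so by Proposition \ref{genus one} it is smooth over $\kappa(x_i)$, and the minimal resolution satisfies the $(**)$-condition after all. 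Granting this variant, $\mathcal{X}_\eta$ is geometrically log canonical over $K$, so Proposition \ref{family} applies; finally a general $X\cap H$ is normal by \cite{CGM}, so it makes sense to speak of its log canonical (resp.~klt) singularities, as required.
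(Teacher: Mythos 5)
Your overall strategy is the paper's: package the hyperplane sections into the incidence family $\mathcal{X}\to\PP^*$, reduce via Proposition \ref{family} to showing that the generic fibre $\mathcal{X}_\eta$ is geometrically normal and geometrically log canonical (resp.~klt) over $K=K(\PP^*)$, and then verify the hypotheses of Lemma \ref{criterion2} using the geometric regularity of the fibres of $\mathcal{X}_\eta\to X$ from \cite{CGM}. The klt half is essentially correct and matches the paper (Corollary \ref{geometric klt} is exactly Lemma \ref{criterion2} plus Proposition \ref{* for rational}, and Proposition \ref{* for rational} needs no transcendence-degree hypothesis).

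The gap is in the log canonical case, precisely at the point you flag yourself: the simple regular elliptic singularities. Your proposed fix is a ``variant of Proposition \ref{* for simple elliptic}'' justified by saying that the Weierstrass coefficients of the exceptional elliptic curve ``involve the algebraically independent coefficients of the generic hyperplane'' and that ``this forces $E$ to be geometrically reduced.'' That is a heuristic, not an argument: geometric reducedness of a genus-one curve over an imperfect field is exactly the delicate issue that Proposition \ref{separability} and Corollary \ref{separability genus one} are designed to handle, and nothing in your sketch explains why genericity of the hyperplane coefficients would rule out the inseparability phenomena that occur over fields of large transcendence degree. As stated, this step is unproved.

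The correct resolution is already available in the paper's toolkit and you miss it. If $P\in\mathcal{X}_\eta$ is a non-smooth point, set $x:=\pi(P)\in X$; the dimension formula for the flat morphism $\pi$ gives $\dim\sO_{X,x}=2$, so $x$ is a codimension-two point of the threefold $X$ and hence $\kappa(x)$ is finitely generated of transcendence degree \emph{one} over the perfect field $k$. Thus hypothesis (b) of Corollary \ref{* for lc} holds for the normal surface singularity $(x\in\Spec\sO_{X,x})$ --- there is no need for any new variant --- and that corollary produces a log resolution of $\Spec\sO_{X,x}$ satisfying the $(**)$-condition. Lemma \ref{* for base change} (which is stated in the paper precisely for this purpose) then transfers the $(**)$-condition along the flat local morphism $\Spec(\sO_{\mathcal{X}_\eta,P})\to\Spec(\sO_{X,x})$, which is what Lemma \ref{criterion2} requires at $P$. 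In short: the transcendence-degree hypothesis is not ``false in our setting''; it is checked for $\kappa(x)$ over $k$ on the threefold $X$, not for $\kappa(P)$ over $K(\PP^*)$ on the generic fibre, and the flatness of $\mathcal{X}_\eta\to X$ is what lets you move the conclusion upstairs.
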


\begin{proof}
Let $\PP^* : = \PP^N_k$ be the dual projective space and $\mathcal{Z} \subseteq \PP^N_k \times \PP^*$ be the universal family of hyperplanes of $\PP^N_k$, that is, the reduced closed subscheme of $\PP^N_k \times_k \PP^*$ such that the set of closed points of $\mathcal{Z}$ coincides with
\[
\{(x,H) \in \PP^N_k(k) \times (\PP^*)(k) \mid x \in H \}.
\]
Let $\mathcal{X}$ be the scheme theoretic intersection $\mathcal{Z} \cap (X \times \PP^*)$.
We note that a general hyperplane section of $X$ is none other than a general closed fiber of the second projection 
\[
p_2: \mathcal{X} \to \PP^*.
\]
Since the generic fiber $\mathcal{X}_{\eta}$ is a hypersurface of the three-dimensional variety $X \times_k \Spec(K(\PP^*))$ over $K(\PP^*)$, it is two-dimensional.
Given that any two-dimensional variety admits a log resolution which is a projective morphism, by Lemma \ref{family}, it suffices to show that the generic fiber $\mathcal{X}_{\eta}$ is geometrically normal and geometrically log canonical (resp. geometrically klt) over the function field $K(\PP^*)$.

Take a (not necessarily closed) point $P \in \mathcal{X}_{\eta}$ and we set $x: = \pi(P) \in X$, where $\pi: \mathcal{X}_{\eta} \to X$ is the morphism obtained by the first projection.
It then follows from the proof of \cite[Theorem 1]{CGM} that the fiber $\pi^{-1}(x)$ is geometrically regular both over $\kappa(x)$ and $K(\PP^*)$.

\[
\xymatrix{
\Spec(\kappa(x)) \ar[d] & & \pi^{-1}(x) \ar[ll] \ar[d] \\
X & \mathcal{X} \ar[l] \ar[d] & \mathcal{X}_{\eta} \ar[l] \ar[d] \\
& \PP^* & \Spec(K(\PP^*)) \ar[l]
}\]

By Corollary \ref{geometric klt} and Corollary \ref{geometric lc}, it suffices to prove that if $\mathcal{X}_{\eta}$ is not smooth over $K(\PP^*)$ at $P$, then the following properties hold:
\begin{enumerate}[label=\textup{(\alph*)}]
\item $P$ is a closed point of $\mathcal{X}_{\eta}$,
\item $\mathcal{X}_{\eta}$ is log canonical (resp.~klt) at $P$ and
\item the residue field $\kappa(P)$ is separable over $K(\PP^*)$.
\end{enumerate}

\textbf{Case. 1} : We first consider the case where $X$ is regular at $x$.
In this case, take a finite field extension $L$ of $K(\PP^*)$ and a point $Q \in \mathcal{X}_L : = \mathcal{X}_{\eta} \times_{K(\PP^*)} L$ over $P$.
Since the fiber of the first projection $\mathcal{X}_L \to X$ at $x$ is isomorphic to the regular scheme $\pi^{-1}(x) \times_{K(\PP^*)} L$, it follows from the ascent of regularity (\cite[Theorem 23.7]{Mat}) that $\mathcal{X}_L$ is regular at $Q$.
Therefore, $\mathcal{X}_{\eta}$ is smooth over $K(\PP^*)$ at $P$ and there is nothing to say.

\textbf{Case. 2} : We next consider the case where $X$ is non-regular at $x$.
Since $\pi: \mathcal{X}_{\eta} \to X$ is flat, we have the dimension formula (\cite[Lemma 00ON]{Sta}) 
\[
\dim(\sO_{\mathcal{X}_{\eta}, P}) = \dim(\sO_{X,x}) + \dim(\sO_{\pi^{-1}(x), P}),
\]
which implies that $\dim \sO_{X,x}=2$, $P$ is a closed point of $\mathcal{X}_{\eta}$ and $\pi^{-1}(x)$ consists of a single point $P$ around $P$.
Combining the third property with the fact that $\pi^{-1}(x)$ is geometrically regular over $K(\PP^*)$, the residue field $\kappa(P)$ is separable over $K(\PP^*)$.
Similarly, since $\pi^{-1}(x)$ is geometrically regular over $\kappa(x)$, it follows from \cite[Lemma 2.6 (3)]{ST mixed}\footnote{The assumption in loc.~cit. that the ring is in characteristic zero is unnecessary since the two dimensional scheme $\Spec (\sO_{X,x})$ admits a log resolution.} that $\mathcal{X}_{\eta}$ is log canonical (resp.~klt) at $P$, as desired. 
\end{proof}

\appendix

\section{Classification of dual graphs of numerically log canonical surface singularities}\label{appendix}

In this appendix, we list up the dual graphs of normal surface singularities which are log canonical (resp.~klt).
See \cite{Kol} for more details.

\begin{defn}\label{numerically sing}
Let $(x \in X)$ be a normal surface singularity, $f : Y \to X$ be the minimal resolution and $\Exc(f)=\bigcup_{i=1}^n E_i$ be the irreducible decomposition of the exceptional divisor.
Let $\Delta_Y$ be the unique $\Q$-divisor on $Y$ which satisfies
\[
(\Delta_Y \cdot E_i) = - (K_Y \cdot E_i)
\]
for all $i$.
We say that $(x \in X)$ is \emph{numerically log canonical} (resp.~\emph{numerically klt}) if every the coefficient of $\Delta_Y$ is at most (resp.~smaller than) $1$.
\end{defn}

\begin{defn}
A normal surface singularity $(x \in X)$ is \emph{rational} if the minimal resolution $f : Y \to X$ satisfies $R^1 f_* \sO_Y=0$.
\end{defn}

\begin{thm}[\cite{Kol}]\label{classification}
Let $(x \in X)$ be a normal surface singularity, $f: Y \to X$ be a minimal resolution, $\Gamma$ is the dual graph of $X$ (see Notation \ref{notation}), $E=\sum_{i=1}^n E_i$ be the sum of all exceptional prime divisors and $\Delta_Y$ be as in Definition \ref{numerically sing}.
Then the following holds:
\begin{enumerate}[label=\textup{(\arabic*)}]
\item $(x \in X)$ is numerically klt if and only if the dual graph $\Gamma$ coincides with one of the graphs in Figure \ref{classification klt} below.
\item $(x \in X)$ is rational and numerically log canonical but not numerically klt if and only if $\Gamma$ coincides with one of the graphs in Figure \ref{classification lc rational1} or Figure \ref{classification lc rational2} below.
\item The following are equivalent.
\begin{enumerate}[label=\textup{(\alph*)}]
\item $(x \in X)$ is numerically log canonical, but not a rational singularity.
\item $\Gamma$ coincides with one of the graph in Figure \ref{classification lc non rational} below.
\item $\Delta_Y=E$.
\end{enumerate}

\end{enumerate}
\end{thm}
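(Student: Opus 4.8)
The plan is to translate the three assertions into statements about the negative-definite weighted graph $\Gamma$ and the $\Q$-divisor $\Delta_Y$, and then to assemble them from the corresponding classifications in \cite{Kol}. The first observation is that $\Delta_Y$, and hence numerical lc/klt, depends only on $\Gamma$. Writing $r_i := \dim_{\kappa(x)} H^0(E_i, \sO_{E_i})$ and $g_i := g(E_i)$, adjunction on $E_i \subseteq Y$ together with Lemma \ref{RR}(2) applied over the field $K_i := H^0(E_i, \sO_{E_i})$ gives
\[
(K_Y + E_i) \cdot E_i = r_i(2 g_i - 2),
\]
so that $(\Delta_Y \cdot E_i) = -(K_Y \cdot E_i) = (E_i^2) + r_i(2 - 2 g_i)$; thus $\Delta_Y$ solves a linear system whose matrix is $(E_i \cdot E_j)$ and whose right-hand side is read off from the weights of $\Gamma$. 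Since $(E_i \cdot E_j)$ is negative definite and $K_Y$ is $f$-nef, one has $\Delta_Y \ge 0$, and $K_{Y/X} = -\Delta_Y$; consequently $(x \in X)$ is numerically log canonical (resp.~numerically klt) precisely when every coefficient of $\Delta_Y$ is $\le 1$ (resp.~$<1$), and in the numerically log canonical case $0 \le \Delta_Y \le E$.

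I would dispose of (3) first. The equivalence (b)$\Leftrightarrow$(c) is a finite check: solving the linear system above for each graph in Figure \eqref{classification lc non rational} yields $\Delta_Y = E$, and conversely $\Delta_Y = E$ is equivalent to $(K_Y + E) \cdot E_i = 0$ for all $i$, which by Riemann--Roch and Serre duality forces $p_a(E) = \dim_{\kappa(x)} H^1(E, \sO_E) = 1$ and then pins down $\Gamma$ by the combinatorial analysis used for (1)--(2). For (a)$\Leftrightarrow$(c): if $\Delta_Y = E$, then the surjection $R^1 f_* \sO_Y \twoheadrightarrow H^1(E, \sO_E)$, coming from $0 \to \sO_Y(-E) \to \sO_Y \to \sO_E \to 0$ and $R^2 f_* \sO_Y(-E) = 0$, shows $(x \in X)$ is not rational; conversely, if $(x \in X)$ is numerically log canonical with some coefficient of $\Delta_Y$ strictly less than $1$, the characterization of rationality among numerically log canonical surface singularities in \cite{Kol} (every effective exceptional cycle has non-positive arithmetic genus) gives $R^1 f_* \sO_Y = 0$.

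For (1) and (2), the "if'' directions are again finite verifications: for each graph in Figures \eqref{classification klt}, \eqref{classification lc rational1} and \eqref{classification lc rational2}, one solves the linear system, reads off the coefficients of $\Delta_Y$, and checks the required bound, together with $R^1 f_* \sO_Y = 0$ in the rational case. The "only if'' directions are the substance of the classification: using the constraints $r_i \mid (E_i \cdot E_j)$ and $(E_i^2) \le -2 r_i$ (Remark \ref{rmk on dual graph}), connectedness and negative definiteness of $\Gamma$, and the coefficient bound on $\Delta_Y$, one bounds the valence of every vertex, locates the branch vertices and the chains of $(-2r_i)$-curves joining them, and enumerates the resulting chains and forks. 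I would carry this step out by citing the classification of log terminal and log canonical surface singularities in \cite{Kol}, which is established in exactly this weighted generality, rather than reproving it here.

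The main obstacle is the bookkeeping imposed by an arbitrary, possibly imperfect residue field $\kappa(x)$: the exceptional curves $E_i$ need not be smooth rational curves but carry the weight $r_i$ (and may have arithmetic genus up to $1$, with nodes), while the intersection numbers are only constrained modulo $r_i$. Pushing the classical chain-and-fork analysis through these divisibility constraints, and verifying that precisely the graphs in the figures appear, is where the real work lies; once the weighted combinatorics is controlled, the rest reduces to adjunction, Riemann--Roch, and the negative definiteness of the intersection form.
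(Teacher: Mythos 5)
Your proposal is correct and follows essentially the same route as the paper: the implication (c)\,$\Rightarrow$\,(a) in (3) is handled by exactly the same computation ($\Delta_Y=E$ forces $\deg_{E/\kappa(x)}(\omega_E)=0$, hence $\chi(E/\kappa(x),\sO_E)=0$ and $H^1(E,\sO_E)\neq 0$, which survives the surjection from $R^1f_*\sO_Y$), while the converse and the enumeration of graphs in (1), (2) and (3)(b) are, as in the paper, deferred to the classification in \cite{Kol}.
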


\begin{proof}
We first prove the implication (c) $\Rightarrow$ (a) in (3).
Since we have $\omega_E \cong \omega_Y(E)|_E$, it follows from the assumption (c) that one has 
\[
\deg_{E/\kappa(x)}(\omega_E) : = \sum_{i=1}^n \deg_{E_i/\kappa(x)} (\omega_E|_{E_i}) = \sum_{i=1}^n ((K_Y+E) \cdot E_i) =0.
\]
Noting that Riemann-Roch theorem (Lemma \ref{RR} (1)) holds true even for reducible curve (\cite[Example 18.3.6]{Ful}), combining with Serre duality, we conclude that
\[
\deg_{E/\kappa(x)}(\omega_E) = 2 \chi(E/\kappa(x), \sO_E).
\]
Therefore, we have
\[
\dim_{\kappa(x)}(H^1(E, \sO_E))=\dim_{\kappa(x)}(H^0(E, \sO_E)) \neq 0.
\]
It then follows from the exact sequence
\[
R^1f_* \sO_Y \to H^1(E, \sO_E) \to R^2f_*\sO_Y(-E)=0
\]
that $(x \in X)$ is non-rational, which proves the implication (c) $\Rightarrow$ (a) in (3).
The converse implication (a) $\Rightarrow$ (c) follows from \cite[Theorem 2.28]{Kol}.
The rest of the proof follows from \cite[Subsection 3.3]{Kol}.
\end{proof}


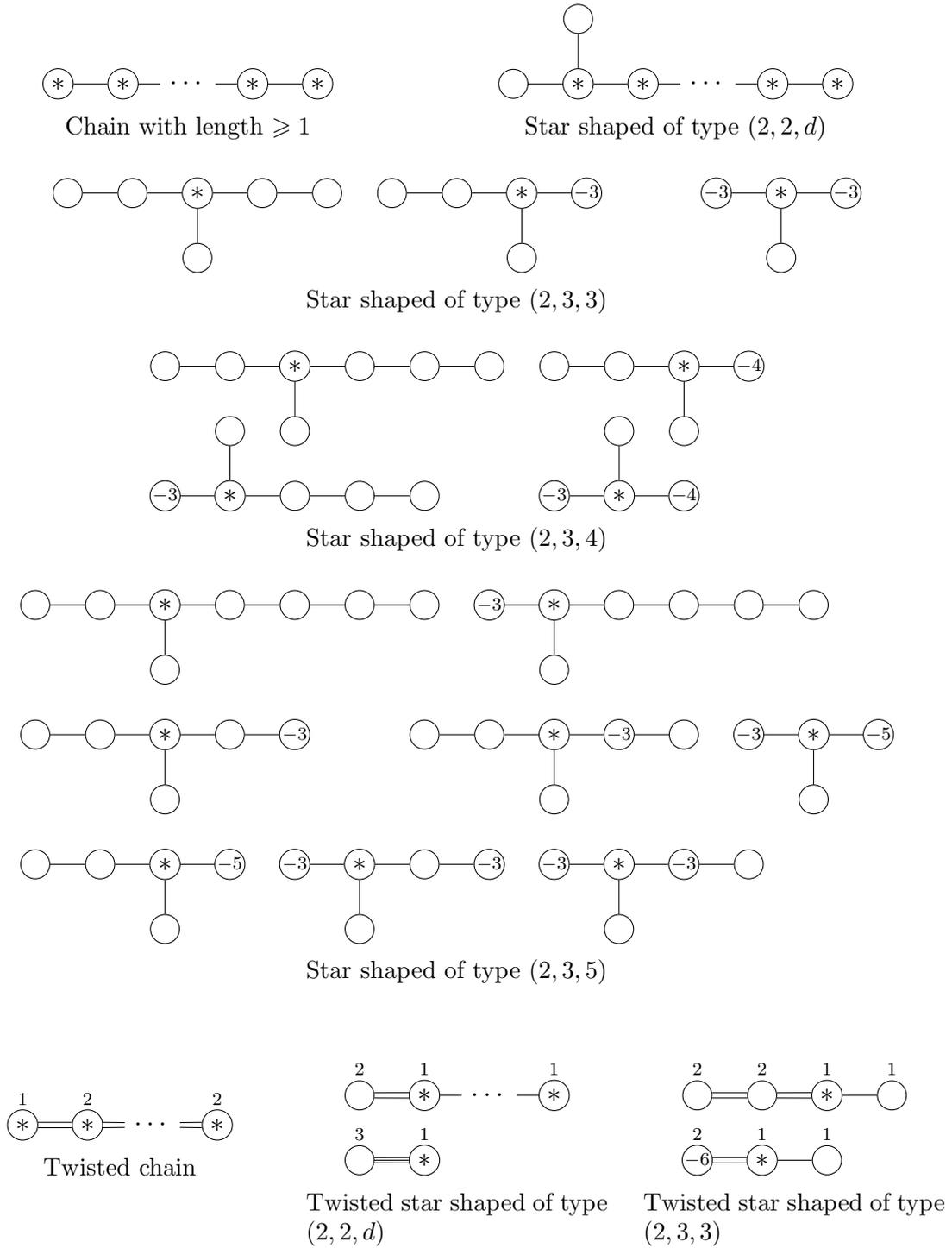
\begin{figure}[H]
\captionsetup[subfigure]{labelformat=empty}
\centering

\vspace*{0.5cm}
\begin{subfigure}[b]{0.4\textwidth}
\centering
\begin{tikzpicture}
\node[draw, shape=circle, inner sep=1.8pt] (C1) at (1,0){$*$};
\node[draw, shape=circle, inner sep=1.8pt] (C2) at (2,0){$*$};
\draw node (Dot) at (3,0){$\cdots$};
\node[draw, shape=circle, inner sep=1.8pt] (C3) at (4,0){$*$};
\node[draw, shape=circle, inner sep=1.8pt] (C4) at (5,0){$*$};

\draw (C1)--(C2);
\draw (C2)--(Dot);
\draw (Dot)--(C3);
\draw (C3)--(C4);
\end{tikzpicture}
\caption{Chain with length $\ge 1$}
\end{subfigure} 
\quad
\begin{subfigure}[b]{0.5\textwidth}
\centering
\begin{tikzpicture}
\node[draw, shape=circle, inner sep=4.5pt] (LU) at (0,0){};
\node[draw, shape=circle, inner sep=1.8pt] (C1) at (1,0){$*$};
\node[draw, shape=circle, inner sep=4.5pt] (LD) at (1,1){};
\node[draw, shape=circle, inner sep=1.8pt] (C2) at (2,0){$*$};
\draw node (Dot) at (3,0){$\cdots$};
\node[draw, shape=circle, inner sep=1.8pt] (C3) at (4,0){$*$};
\node[draw, shape=circle, inner sep=1.8pt] (C4) at (5,0){$*$};

\draw (LU)--(C1);
\draw (LD)--(C1);
\draw (C1)--(C2);
\draw (C2)--(Dot);
\draw (Dot)--(C3);
\draw (C3)--(C4);
\end{tikzpicture}
\caption{Star shaped of type $(2,2,d)$}
\end{subfigure} 

\vspace*{0.5cm}
\begin{subfigure}[b]{1\textwidth}
\centering
\begin{tikzpicture}
\node[draw, shape=circle, inner sep=4.5pt] (L2) at (0,0){};
\node[draw, shape=circle, inner sep=4.5pt] (L1) at (1,0){};
\node[draw, shape=circle, inner sep=1.8pt] (C) at (2,0){$*$};
\node[draw, shape=circle, inner sep=4.5pt] (D1) at (2,-1){};
\node[draw, shape=circle, inner sep=4.5pt] (R1) at (3,0){};
\node[draw, shape=circle, inner sep=4.5pt] (R2) at (4,0){};

\draw (L2)--(L1);
\draw (L1)--(C);
\draw (C)--(D1);
\draw (C)--(R1);
\draw (R1)--(R2);

\node[draw, shape=circle, inner sep=4.5pt] (2L2) at (5,0){};
\node[draw, shape=circle, inner sep=4.5pt] (2L1) at (6,0){};
\node[draw, shape=circle, inner sep=1.8pt] (2C) at (7,0){$*$};
\node[draw, shape=circle, inner sep=4.5pt] (2D1) at (7,-1){};
\node[draw, shape=circle, inner sep=0.3pt] (2R1) at (8,0){\tiny $-3$};

\draw (2L2)--(2L1);
\draw (2L1)--(2C);
\draw (2C)--(2D1);
\draw (2C)--(2R1);

\node[draw, shape=circle, inner sep=0.3pt] (3L1) at (10,0){\tiny $-3$};
\node[draw, shape=circle, inner sep=1.8pt] (3C) at (11,0){$*$};
\node[draw, shape=circle, inner sep=4.5pt] (3D1) at (11,-1){};
\node[draw, shape=circle, inner sep=0.3pt] (3R1) at (12,0){\tiny $-3$};

\draw (3L1)--(3C);
\draw (3C)--(3D1);
\draw (3C)--(3R1);
\end{tikzpicture}
\caption{Star shaped of type $(2,3,3)$}
\end{subfigure} 

\vspace*{0.5cm}
\begin{subfigure}[b]{1\textwidth}
\centering
\begin{tikzpicture}
\node[draw, shape=circle, inner sep=4.5pt] (L2) at (0,0){};
\node[draw, shape=circle, inner sep=4.5pt] (L1) at (1,0){};
\node[draw, shape=circle, inner sep=1.8pt] (C) at (2,0){$*$};
\node[draw, shape=circle, inner sep=4.5pt] (D1) at (2,-1){};
\node[draw, shape=circle, inner sep=4.5pt] (R1) at (3,0){};
\node[draw, shape=circle, inner sep=4.5pt] (R2) at (4,0){};
\node[draw, shape=circle, inner sep=4.5pt] (R3) at (5,0){};

\draw (L2)--(L1);
\draw (L1)--(C);
\draw (C)--(D1);
\draw (C)--(R1);
\draw (R1)--(R2);
\draw (R2)--(R3);

\node[draw, shape=circle, inner sep=4.5pt] (2L2) at (6,0){};
\node[draw, shape=circle, inner sep=4.5pt] (2L1) at (7,0){};
\node[draw, shape=circle, inner sep=1.8pt] (2C) at (8,0){$*$};
\node[draw, shape=circle, inner sep=4.5pt] (2D1) at (8,-1){};
\node[draw, shape=circle, inner sep=0.3pt] (2R1) at (9,0){\tiny $-4$};

\draw (2L2)--(2L1);
\draw (2L1)--(2C);
\draw (2C)--(2D1);
\draw (2C)--(2R1);

\node[draw, shape=circle, inner sep=0.3pt] (L1) at (0,-2){\tiny $-3$};
\node[draw, shape=circle, inner sep=1.8pt] (C) at (1,-2){$*$};
\node[draw, shape=circle, inner sep=4.5pt] (D1) at (1,-1){};
\node[draw, shape=circle, inner sep=4.5pt] (R1) at (2,-2){};
\node[draw, shape=circle, inner sep=4.5pt] (R2) at (3,-2){};
\node[draw, shape=circle, inner sep=4.5pt] (R3) at (4,-2){};

\draw (L1)--(C);
\draw (C)--(D1);
\draw (C)--(R1);
\draw (R1)--(R2);
\draw (R2)--(R3);

\node[draw, shape=circle, inner sep=0.3pt] (3L1) at (6,-2){\tiny $-3$};
\node[draw, shape=circle, inner sep=1.8pt] (3C) at (7,-2){$*$};
\node[draw, shape=circle, inner sep=4.5pt] (3D1) at (7,-1){};
\node[draw, shape=circle, inner sep=0.3pt] (3R1) at (8,-2){\tiny $-4$};

\draw (3L1)--(3C);
\draw (3C)--(3D1);
\draw (3C)--(3R1);
\end{tikzpicture}
\caption{Star shaped of type $(2,3,4)$}
\end{subfigure} 

\vspace*{0.5cm}
\begin{subfigure}[b]{1\textwidth}
\centering
\begin{tikzpicture}
\node[draw, shape=circle, inner sep=4.5pt] (L2) at (0,0){};
\node[draw, shape=circle, inner sep=4.5pt] (L1) at (1,0){};
\node[draw, shape=circle, inner sep=1.8pt] (C) at (2,0){$*$};
\node[draw, shape=circle, inner sep=4.5pt] (D1) at (2,-1){};
\node[draw, shape=circle, inner sep=4.5pt] (R1) at (3,0){};
\node[draw, shape=circle, inner sep=4.5pt] (R2) at (4,0){};
\node[draw, shape=circle, inner sep=4.5pt] (R3) at (5,0){};
\node[draw, shape=circle, inner sep=4.5pt] (R4) at (6,0){};

\draw (L2)--(L1);
\draw (L1)--(C);
\draw (C)--(D1);
\draw (C)--(R1);
\draw (R1)--(R2);
\draw (R2)--(R3);
\draw (R3)--(R4);

\node[draw, shape=circle, inner sep=0.3pt] (2L1) at (7,0){\tiny $-3$};
\node[draw, shape=circle, inner sep=1.8pt] (2C) at (8,0){$*$};
\node[draw, shape=circle, inner sep=4.5pt] (2D1) at (8,-1){};
\node[draw, shape=circle, inner sep=4.5pt] (2R1) at (9,0){};
\node[draw, shape=circle, inner sep=4.5pt] (2R2) at (10,0){};
\node[draw, shape=circle, inner sep=4.5pt] (2R3) at (11,0){};
\node[draw, shape=circle, inner sep=4.5pt] (2R4) at (12,0){};

\draw (2L1)--(2C);
\draw (2C)--(2D1);
\draw (2C)--(2R1);
\draw (2R1)--(2R2);
\draw (2R2)--(2R3);
\draw (2R3)--(2R4);

\node[draw, shape=circle, inner sep=4.5pt] (L2) at (0,-2){};
\node[draw, shape=circle, inner sep=4.5pt] (L1) at (1,-2){};
\node[draw, shape=circle, inner sep=1.8pt] (C) at (2,-2){$*$};
\node[draw, shape=circle, inner sep=4.5pt] (D1) at (2,-3){};
\node[draw, shape=circle, inner sep=4.5pt] (R1) at (3,-2){};
\node[draw, shape=circle, inner sep=0.3pt] (R2) at (4,-2){\tiny $-3$};

\draw (L2)--(L1);
\draw (L1)--(C);
\draw (C)--(D1);
\draw (C)--(R1);
\draw (R1)--(R2);

\node[draw, shape=circle, inner sep=4.5pt] (2L1) at (6,-2){};
\node[draw, shape=circle, inner sep=4.5pt] (2L2) at (7,-2){};
\node[draw, shape=circle, inner sep=1.8pt] (2C) at (8,-2){$*$};
\node[draw, shape=circle, inner sep=4.5pt] (2D1) at (8,-3){};
\node[draw, shape=circle, inner sep=0.3pt] (2R1) at (9,-2){\tiny $-3$};
\node[draw, shape=circle, inner sep=4.5pt] (2R2) at (10,-2){};

\draw (2L1)--(2L2);
\draw (2L2)--(2C);
\draw (2C)--(2D1);
\draw (2C)--(2R1);
\draw (2R1)--(2R2);

\node[draw, shape=circle, inner sep=0.3pt] (3L1) at (11,-2){\tiny $-3$};
\node[draw, shape=circle, inner sep=1.8pt] (3C) at (12,-2){$*$};
\node[draw, shape=circle, inner sep=4.5pt] (3D1) at (12,-3){};
\node[draw, shape=circle, inner sep=0.3pt] (3R1) at (13,-2){\tiny $-5$};

\draw (3L1)--(3C);
\draw (3C)--(3D1);
\draw (3C)--(3R1);

\node[draw, shape=circle, inner sep=4.5pt] (L2) at (0,-4){};
\node[draw, shape=circle, inner sep=4.5pt] (L1) at (1,-4){};
\node[draw, shape=circle, inner sep=1.8pt] (C) at (2,-4){$*$};
\node[draw, shape=circle, inner sep=4.5pt] (D1) at (2,-5){};
\node[draw, shape=circle, inner sep=0.3pt] (R1) at (3,-4){\tiny $-5$};

\draw (L2)--(L1);
\draw (L1)--(C);
\draw (C)--(D1);
\draw (C)--(R1);

\node[draw, shape=circle, inner sep=0.3pt] (2L1) at (4,-4){\tiny $-3$};
\node[draw, shape=circle, inner sep=1.8pt] (2C) at (5,-4){$*$};
\node[draw, shape=circle, inner sep=4.5pt] (2D1) at (5,-5){};
\node[draw, shape=circle, inner sep=4.5pt] (2R1) at (6,-4){};
\node[draw, shape=circle, inner sep=0.3pt] (2R2) at (7,-4){\tiny $-3$};

\draw (2L1)--(2C);
\draw (2C)--(2D1);
\draw (2C)--(2R1);
\draw (2R1)--(2R2);

\node[draw, shape=circle, inner sep=0.3pt] (3L1) at (8,-4){\tiny $-3$};
\node[draw, shape=circle, inner sep=1.8pt] (3C) at (9,-4){$*$};
\node[draw, shape=circle, inner sep=4.5pt] (3D1) at (9,-5){};
\node[draw, shape=circle, inner sep=0.3pt] (3R1) at (10,-4){\tiny $-3$};
\node[draw, shape=circle, inner sep=4.5pt] (3R2) at (11,-4){};

\draw (3L1)--(3C);
\draw (3C)--(3D1);
\draw (3C)--(3R1);
\draw (3R1)--(3R2);
\end{tikzpicture}
\caption{Star shaped of type $(2,3,5)$}
\end{subfigure} 

\vspace*{1cm}
\begin{subfigure}[b]{0.3\textwidth}
\centering
\begin{tikzpicture}
\node[draw, shape=circle, inner sep=1.8pt] (L1) at (0,0){$*$};
\draw[shift={(0,0.4)}] (0,0) node{\tiny $1$};
\node[draw, shape=circle, inner sep=1.8pt] (L2) at (1,0){$*$};
\draw[shift={(0,0.4)}] (1,0) node{\tiny $2$};
\draw node (Dot) at (2,0){$\cdots$};
\node[draw, shape=circle, inner sep=1.8pt] (L3) at (3,0){$*$};
\draw[shift={(0,0.4)}] (3,0) node{\tiny $2$};

\draw (L1.10)--(L2.170);
\draw (L1.-10)--(L2.190);
\draw (L2.10)--(Dot.175);
\draw (L2.-10)--(Dot.185);
\draw (Dot.5)--(L3.170);
\draw (Dot.-5)--(L3.190);
\end{tikzpicture}
\caption{Twisted chain}
\vspace*{1cm}
\end{subfigure} 
\quad
\begin{subfigure}[b]{0.3\textwidth}
\centering
\begin{tikzpicture}
\node[draw, shape=circle, inner sep=4.5pt] (A1) at (0,0){};
\draw[shift={(0,0.4)}] (0,0) node{\tiny $2$};
\node[draw, shape=circle, inner sep=1.8pt] (A2) at (1,0){$*$};
\draw[shift={(0,0.4)}] (1,0) node{\tiny $1$};
\draw node (ADot) at (2,0){$\cdots$};
\node[draw, shape=circle, inner sep=1.8pt] (A3) at (3,0){$*$};
\draw[shift={(0,0.4)}] (3,0) node{\tiny $1$};

\draw (A1.10)--(A2.170);
\draw (A1.-10)--(A2.190);
\draw (A2)--(ADot);
\draw (A3)--(ADot);

\node[draw, shape=circle, inner sep=4.5pt] (B1) at (0,-1){};
\draw[shift={(0,0.4)}] (0,-1) node{\tiny $3$};
\node[draw, shape=circle, inner sep=1.8pt] (B2) at (1,-1){$*$};
\draw[shift={(0,0.4)}] (1,-1) node{\tiny $1$};

\draw (B1.10)--(B2.170);
\draw (B1)--(B2);
\draw (B1.-10)--(B2.190);
\end{tikzpicture}
\caption{Twisted star shaped of type $(2,2,d)$}
\end{subfigure} 
\quad
\begin{subfigure}[b]{0.3\textwidth}
\centering
\begin{tikzpicture}
\node[draw, shape=circle, inner sep=4.5pt] (A1) at (0,0){};
\draw[shift={(0,0.4)}] (0,0) node{\tiny $2$};
\node[draw, shape=circle, inner sep=4.5pt] (A2) at (1,0){};
\draw[shift={(0,0.4)}] (1,0) node{\tiny $2$};
\node[draw, shape=circle, inner sep=1.8pt] (A3) at (2,0){$*$};
\draw[shift={(0,0.4)}] (2,0) node{\tiny $1$};
\node[draw, shape=circle, inner sep=4.5pt] (A4) at (3,0){};
\draw[shift={(0,0.4)}] (3,0) node{\tiny $1$};

\draw (A1.10)--(A2.170);
\draw (A1.-10)--(A2.190);
\draw (A2.10)--(A3.170);
\draw (A2.-10)--(A3.190);
\draw (A3)--(A4);

\node[draw, shape=circle, inner sep=0.3pt] (B1) at (0,-1){\tiny $-6$};
\draw[shift={(0,0.4)}] (0, -1) node{\tiny $2$};
\node[draw, shape=circle, inner sep=1.8pt] (B2) at (1,-1){$*$};
\draw[shift={(0,0.4)}] (1,-1) node{\tiny $1$};
\node[draw, shape=circle, inner sep=4.5pt] (B3) at (2,-1){};
\draw[shift={(0,0.4)}] (2,-1) node{\tiny $1$};

\draw (B1.10)--(B2.170);
\draw (B1.-10)--(B2.190);
\draw (B2)--(B3);
\end{tikzpicture}
\caption{Twisted star shaped of type $(2,3,3)$}
\end{subfigure} 
\caption{List of dual graphs of numerical klt singularities}
\label{classification klt}
\end{figure}



\begin{figure}[H]
\captionsetup[subfigure]{labelformat=empty}
\centering

\vspace{1cm}
\begin{subfigure}[b]{1\textwidth}
\centering
\begin{tikzpicture}
\node[draw, shape=circle, inner sep=4.5pt] (L2) at (0,0){};
\node[draw, shape=circle, inner sep=4.5pt] (L1) at (1,0){};
\node[draw, shape=circle, inner sep=1.8pt] (C) at (2,0){$*$};
\node[draw, shape=circle, inner sep=4.5pt] (D1) at (2,-1){};
\node[draw, shape=circle, inner sep=4.5pt] (R1) at (3,0){};
\node[draw, shape=circle, inner sep=4.5pt] (R2) at (4,0){};
\node[draw, shape=circle, inner sep=4.5pt] (R3) at (5,0){};
\node[draw, shape=circle, inner sep=4.5pt] (R4) at (6,0){};
\node[draw, shape=circle, inner sep=4.5pt] (R5) at (7,0){};

\draw (L2)--(L1);
\draw (L1)--(C);
\draw (C)--(D1);
\draw (C)--(R1);
\draw (R1)--(R2);
\draw (R2)--(R3);
\draw (R3)--(R4);
\draw (R4)--(R5);

\node[draw, shape=circle, inner sep=4.5pt] (LL2) at (7,-1){};
\node[draw, shape=circle, inner sep=4.5pt] (LL1) at (8,-1){};
\node[draw, shape=circle, inner sep=1.8pt] (CC) at (9,-1){$*$};
\node[draw, shape=circle, inner sep=4.5pt] (DD1) at (9,0){};
\node[draw, shape=circle, inner sep=0.3pt] (RR1) at (10,-1){\tiny $-6$};

\draw (LL2)--(LL1);
\draw (LL1)--(CC);
\draw (CC)--(DD1);
\draw (CC)--(RR1);

\node[draw, shape=circle, inner sep=0.3pt] (LLL1) at (0,-3){\tiny $-3$};
\node[draw, shape=circle, inner sep=1.8pt] (CCC) at (1,-3){$*$};
\node[draw, shape=circle, inner sep=4.5pt] (DDD1) at (1,-2){};
\node[draw, shape=circle, inner sep=4.5pt] (RRR1) at (2,-3){};
\node[draw, shape=circle, inner sep=4.5pt] (RRR2) at (3,-3){};
\node[draw, shape=circle, inner sep=4.5pt] (RRR3) at (4,-3){};
\node[draw, shape=circle, inner sep=4.5pt] (RRR4) at (5,-3){};
\node[draw, shape=circle, inner sep=4.5pt] (RRR5) at (6,-3){};

\draw (LLL1)--(CCC);
\draw (CCC)--(DDD1);
\draw (CCC)--(RRR1);
\draw (RRR1)--(RRR2);
\draw (RRR2)--(RRR3);
\draw (RRR3)--(RRR4);
\draw (RRR4)--(RRR5);

\node[draw, shape=circle, inner sep=0.3pt] (LLLL1) at (8,-3){\tiny $-3$};
\node[draw, shape=circle, inner sep=1.8pt] (CCCC) at (9,-3){$*$};
\node[draw, shape=circle, inner sep=4.5pt] (DDDD1) at (9,-2){};
\node[draw, shape=circle, inner sep=0.3pt] (RRRR1) at (10,-3){\tiny $-6$};

\draw (LLLL1)--(CCCC);
\draw (RRRR1)--(CCCC);
\draw (DDDD1)--(CCCC);

\end{tikzpicture}
\caption{Star shaped of type $(2,3,6)$}
\end{subfigure}

\vspace{1cm}
\begin{subfigure}[b]{1\textwidth}
\centering
\begin{tikzpicture}
\node[draw, shape=circle, inner sep=4.5pt] (L2) at (0,0){};
\node[draw, shape=circle, inner sep=4.5pt] (L1) at (1,0){};
\node[draw, shape=circle, inner sep=1.8pt] (C) at (2,0){$*$};
\node[draw, shape=circle, inner sep=0.3pt] (D1) at (2,-1){\tiny $-3$};
\node[draw, shape=circle, inner sep=4.5pt] (R1) at (3,0){};
\node[draw, shape=circle, inner sep=4.5pt] (R2) at (4,0){};

\draw (L2)--(L1);
\draw (L1)--(C);
\draw (C)--(D1);
\draw (C)--(R1);
\draw (R1)--(R2);

\node[draw, shape=circle, inner sep=4.5pt] (LL2) at (6,0){};
\node[draw, shape=circle, inner sep=4.5pt] (LL1) at (7,0){};
\node[draw, shape=circle, inner sep=1.8pt] (CC) at (8,0){$*$};
\node[draw, shape=circle, inner sep=4.5pt] (DD1) at (8,-1){};
\node[draw, shape=circle, inner sep=4.5pt] (DD2) at (8,-2){};
\node[draw, shape=circle, inner sep=4.5pt] (RR1) at (9,0){};
\node[draw, shape=circle, inner sep=4.5pt] (RR2) at (10,0){};

\draw (LL2)--(LL1);
\draw (LL1)--(CC);
\draw (CC)--(DD1);
\draw (DD1)--(DD2);
\draw (CC)--(RR1);
\draw (RR1)--(RR2);

\node[draw, shape=circle, inner sep=4.5pt] (LLL1) at (0,-3){};
\node[draw, shape=circle, inner sep=4.5pt] (LLL2) at (1,-3){};
\node[draw, shape=circle, inner sep=1.8pt] (CCC) at (2,-3){$*$};
\node[draw, shape=circle, inner sep=0.3pt] (DDD1) at (2,-2){\tiny $-3$};
\node[draw, shape=circle, inner sep=0.3pt] (RRR1) at (3,-3){\tiny $-3$};

\draw (LLL1)--(LLL2);
\draw (LLL2)--(CCC);
\draw (CCC)--(DDD1);
\draw (CCC)--(RRR1);

\node[draw, shape=circle, inner sep=0.3pt] (LLLL1) at (5,-3){\tiny $-3$};
\node[draw, shape=circle, inner sep=1.8pt] (CCCC) at (6,-3){$*$};
\node[draw, shape=circle, inner sep=0.3pt] (DDDD1) at (6,-2){\tiny $-3$};
\node[draw, shape=circle, inner sep=0.3pt] (RRRR1) at (7,-3){\tiny $-3$};

\draw (LLLL1)--(CCCC);
\draw (RRRR1)--(CCCC);
\draw (DDDD1)--(CCCC);

\end{tikzpicture}
\caption{Star shaped of type $(3,3,3)$}
\end{subfigure} 

\vspace{1cm}
\begin{subfigure}[b]{1\textwidth}
\centering
\begin{tikzpicture}
\node[draw, shape=circle, inner sep=4.5pt] (L1) at (0,0){};
\node[draw, shape=circle, inner sep=4.5pt] (L2) at (1,0){};
\node[draw, shape=circle, inner sep=4.5pt] (L3) at (2,0){};
\node[draw, shape=circle, inner sep=1.8pt] (C) at (3,0){$*$};
\node[draw, shape=circle, inner sep=4.5pt] (D1) at (3,-1){};
\node[draw, shape=circle, inner sep=4.5pt] (R1) at (4,0){};
\node[draw, shape=circle, inner sep=4.5pt] (R2) at (5,0){};
\node[draw, shape=circle, inner sep=4.5pt] (R3) at (6,0){};

\draw (L1)--(L2);
\draw (L2)--(L3);
\draw (L3)--(C);
\draw (C)--(D1);
\draw (C)--(R1);
\draw (R1)--(R2);
\draw (R2)--(R3);

\node[draw, shape=circle, inner sep=4.5pt] (LLL1) at (6,-1){};
\node[draw, shape=circle, inner sep=4.5pt] (LLL2) at (7,-1){};
\node[draw, shape=circle, inner sep=4.5pt] (LLL3) at (8,-1){};
\node[draw, shape=circle, inner sep=1.8pt] (CCC) at (9,-1){$*$};
\node[draw, shape=circle, inner sep=0.3pt] (DDD1) at (10,-1){\tiny $-4$};
\node[draw, shape=circle, inner sep=4.5pt] (UUU1) at (9,0){};

\draw (LLL1)--(LLL2);
\draw (LLL2)--(LLL3);
\draw (LLL3)--(CCC);
\draw (CCC)--(DDD1);
\draw (CCC)--(UUU1);

\node[draw, shape=circle, inner sep=0.3pt] (LL1) at (4,-2){\tiny $-4$};
\node[draw, shape=circle, inner sep=1.8pt] (CC) at (5,-2){$*$};
\node[draw, shape=circle, inner sep=4.5pt] (DD1) at (5,-1){};
\node[draw, shape=circle, inner sep=0.3pt] (RR1) at (6,-2){\tiny $-4$};

\draw (LL1)--(CC);
\draw (CC)--(DD1);
\draw (CC)--(RR1);

\end{tikzpicture}
\caption{Star shaped of type $(2,4,4)$}
\end{subfigure} 

\vspace{1cm}


\begin{subfigure}[b]{1\textwidth}
\centering
\begin{tikzpicture}
\node[draw, shape=circle, inner sep=4.5pt] (LU) at (0,0){};
\node[draw, shape=circle, inner sep=1.8pt] (C1) at (1,0){$*$};
\node[draw, shape=circle, inner sep=4.5pt] (LD) at (1,-1){};
\node[draw, shape=circle, inner sep=1.8pt] (C2) at (2,0){$*$};
\draw node (Dot) at (3,0){$\cdots$};
\node[draw, shape=circle, inner sep=1.8pt] (C3) at (4,0){$*$};
\node[draw, shape=circle, inner sep=1.8pt] (C4) at (5,0){$*$};
\node[draw, shape=circle, inner sep=4.5pt] (RU) at (6,0){};
\node[draw, shape=circle, inner sep=4.5pt] (RD) at (5,-1){};

\draw (LU)--(C1);
\draw (LD)--(C1);
\draw (C1)--(C2);
\draw (C2)--(Dot);
\draw (Dot)--(C3);
\draw (C3)--(C4);
\draw (RU)--(C4);
\draw (RD)--(C4);
\end{tikzpicture}
\caption{$\tilde{D}_{n}$ with $n \ge 4$}
\end{subfigure} 

\vspace{1cm}

\caption{List of non-twisted dual graphs of rational numerical log canonical singularities}
\label{classification lc rational1}
\end{figure}

\begin{figure}[H]
\captionsetup[subfigure]{labelformat=empty}
\centering

\begin{subfigure}[b]{1\textwidth}
\centering
\begin{tikzpicture}
\node[draw, shape=circle, inner sep=4.5pt] (A1) at (0,0){};
\draw[shift={(0,0.4)}] (0,0) node{\tiny $2$};
\node[draw, shape=circle, inner sep=4.5pt] (A2) at (1,0){};
\draw[shift={(0,0.4)}] (1,0) node{\tiny $2$};
\node[draw, shape=circle, inner sep=1.8pt] (A3) at (2,0){$*$};
\draw[shift={(0,0.4)}] (2,0) node{\tiny $1$};
\node[draw, shape=circle, inner sep=0.3pt] (A4) at (3,0){\tiny $-3$};
\draw[shift={(0,0.4)}] (3,0) node{\tiny $1$};

\draw (A1.10)--(A2.170);
\draw (A1.-10)--(A2.190);
\draw (A2.10)--(A3.170);
\draw (A2.-10)--(A3.190);
\draw (A3)--(A4);

\node[draw, shape=circle, inner sep=4.5pt] (B1) at (4,0){};
\draw[shift={(0,0.4)}] (4,0) node{\tiny $2$};
\node[draw, shape=circle, inner sep=4.5pt] (B2) at (5,0){};
\draw[shift={(0,0.4)}] (5,0) node{\tiny $2$};
\node[draw, shape=circle, inner sep=1.8pt] (B3) at (6,0){$*$};
\draw[shift={(0,0.4)}] (6,0) node{\tiny $1$};
\node[draw, shape=circle, inner sep=4.5pt] (B4) at (7,0){};
\draw[shift={(0,0.4)}] (7,0) node{\tiny $1$};
\node[draw, shape=circle, inner sep=4.5pt] (B5) at (8,0){};
\draw[shift={(0,0.4)}] (8,0) node{\tiny $1$};

\draw (B1.10)--(B2.170);
\draw (B1.-10)--(B2.190);
\draw (B2.10)--(B3.170);
\draw (B2.-10)--(B3.190);
\draw (B3)--(B4);
\draw (B4)--(B5);

\node[draw, shape=circle, inner sep=4.5pt] (C1) at (9,0){};
\draw[shift={(0,0.4)}] (9,0) node{\tiny $3$};
\node[draw, shape=circle, inner sep=4.5pt] (C2) at (10,0){};
\draw[shift={(0,0.4)}] (10,0) node{\tiny $3$};
\node[draw, shape=circle, inner sep=1.8pt] (C3) at (11,0){$*$};
\draw[shift={(0,0.4)}] (11,0) node{\tiny $1$};

\draw (C1)--(C2);
\draw (C1.10)--(C2.170);
\draw (C1.-10)--(C2.190);
\draw (C2)--(C3);
\draw (C2.10)--(C3.170);
\draw (C2.-10)--(C3.190);

\node[draw, shape=circle, inner sep=4.5pt] (A1) at (0,-1){};
\draw[shift={(0,0.4)}] (0,-1) node{\tiny $1$};
\node[draw, shape=circle, inner sep=4.5pt] (A2) at (1,-1){};
\draw[shift={(0,0.4)}] (1,-1) node{\tiny $1$};
\node[draw, shape=circle, inner sep=1.8pt] (A3) at (2,-1){$*$};
\draw[shift={(0,0.4)}] (2,-1) node{\tiny $1$};
\node[draw, shape=circle, inner sep=0.3pt] (A4) at (3,-1){\tiny $-6$};
\draw[shift={(0,0.4)}] (3,-1) node{\tiny $2$};

\draw (A1)--(A2);
\draw (A2)--(A3);
\draw (A3.10)--(A4.170);
\draw (A3.-10)--(A4.190);

\node[draw, shape=circle, inner sep=0.3pt] (B1) at (5,-1){\tiny $-6$};
\draw[shift={(0,0.4)}] (5,-1) node{\tiny $2$};
\node[draw, shape=circle, inner sep=1.8pt] (B2) at (6,-1){$*$};
\draw[shift={(0,0.4)}] (6,-1) node{\tiny $1$};
\node[draw, shape=circle, inner sep=0.3pt] (B3) at (7,-1){\tiny $-3$};
\draw[shift={(0,0.4)}] (7,-1) node{\tiny $1$};

\draw (B1.10)--(B2.170);
\draw (B1.-10)--(B2.190);
\draw (B2)--(B3);

\node[draw, shape=circle, inner sep=0.3pt] (C1) at (9,-1){\tiny $-9$};
\draw[shift={(0,0.4)}] (9,-1) node{\tiny $3$};
\node[draw, shape=circle, inner sep=1.8pt] (C2) at (10,-1){$*$};
\draw[shift={(0,0.4)}] (10,-1) node{\tiny $1$};

\draw (C1)--(C2);
\draw (C1.10)--(C2.170);
\draw (C1.-10)--(C2.190);

\end{tikzpicture}
\caption{Twisted star shaped of type $(3,3,3)$}
\end{subfigure}
 
\begin{subfigure}[b]{1\textwidth}
\centering
\begin{tikzpicture}
\node[draw, shape=circle, inner sep=4.5pt] (A1) at (0,0){};
\draw[shift={(0,0.4)}] (0,0) node{\tiny $2$};
\node[draw, shape=circle, inner sep=4.5pt] (A2) at (1,0){};
\draw[shift={(0,0.4)}] (1,0) node{\tiny $2$};
\node[draw, shape=circle, inner sep=4.5pt] (A3) at (2,0){};
\draw[shift={(0,0.4)}] (2,0) node{\tiny $2$};
\node[draw, shape=circle, inner sep=1.8pt] (A4) at (3,0){$*$};
\draw[shift={(0,0.4)}] (3,0) node{\tiny $1$};
\node[draw, shape=circle, inner sep=4.5pt] (A5) at (4,0){};
\draw[shift={(0,0.4)}] (4,0) node{\tiny $1$};

\draw (A1.10)--(A2.170);
\draw (A1.-10)--(A2.190);
\draw (A2.10)--(A3.170);
\draw (A2.-10)--(A3.190);
\draw (A3.10)--(A4.170);
\draw (A3.-10)--(A4.190);
\draw (A4)--(A5);

\node[draw, shape=circle, inner sep=4.5pt] (B1) at (0, -1){};
\draw[shift={(0,0.4)}] (0,-1) node{\tiny $1$};
\node[draw, shape=circle, inner sep=1.8pt] (B2) at (1,-1){$*$};
\draw[shift={(0,0.4)}] (1,-1) node{\tiny $1$};
\node[draw, shape=circle, inner sep=0.3pt] (B3) at (2,-1){\tiny $-8$};
\draw[shift={(0,0.4)}] (2,-1) node{\tiny $2$};

\draw (B1)--(B2);
\draw (B2.10)--(B3.170);
\draw (B2.-10)--(B3.190);
\end{tikzpicture}
\caption{Twisted star shaped of type $(2,4,4)$}
\end{subfigure} 


\begin{subfigure}[b]{1\textwidth}
\centering
\begin{tikzpicture}
\node[draw, shape=circle, inner sep=1.8pt] (L1) at (0,0){$*$};
\draw[shift={(0,0.4)}] (0,0) node{\tiny $1$};
\node[draw, shape=circle, inner sep=1.8pt] (L2) at (1,0){$*$};
\draw[shift={(0,0.4)}] (1,0) node{\tiny $2$};
\draw node (Dot) at (2,0){$\cdots$};
\node[draw, shape=circle, inner sep=1.8pt] (L3) at (3,0){$*$};
\draw[shift={(0,0.4)}] (3,0) node{\tiny $2$};
\node[draw, shape=circle, inner sep=1.8pt] (L4) at (4,0){$*$};
\draw[shift={(0.2,0.4)}] (4,0) node{\tiny $2$};
\node[draw, shape=circle, inner sep=4.5pt] (U1) at (4,1){};
\draw[shift={(0,0.4)}] (4,1) node{\tiny $2$};
\node[draw, shape=circle, inner sep=4.5pt] (D1) at (5,0){};
\draw[shift={(0,0.4)}] (5,0) node{\tiny $2$};

\draw (L1.10)--(L2.170);
\draw (L1.-10)--(L2.190);
\draw (L2.10)--(Dot.175);
\draw (L2.-10)--(Dot.185);
\draw (Dot.5)--(L3.170);
\draw (Dot.-5)--(L3.190);
\draw (L3.10)--(L4.170);
\draw (L3.-10)--(L4.190);
\draw (L4.100)--(U1.-100);
\draw (L4.80)--(U1.-80);
\draw (L4.10)--(D1.170);
\draw (L4.-10)--(D1.190);

\node[draw, shape=circle, inner sep=4.5pt] (LL1) at (6,0){};
\draw[shift={(0,0.4)}] (6, 0) node{\tiny $2$};
\node[draw, shape=circle, inner sep=1.8pt] (LL2) at (7,0){$*$};
\draw[shift={(0,0.4)}] (7,0) node{\tiny $1$};
\draw node (Dott) at (8,0){$\cdots$};
\node[draw, shape=circle, inner sep=1.8pt] (LL3) at (9,0){$*$};
\draw[shift={(0,0.4)}] (9,0) node{\tiny $1$};
\node[draw, shape=circle, inner sep=1.8pt] (LL4) at (10,0){$*$};
\draw[shift={(0.2,0.4)}] (10,0) node{\tiny $1$};
\node[draw, shape=circle, inner sep=4.5pt] (UU1) at (10,1){};
\draw[shift={(0,0.4)}] (10,1) node{\tiny $1$};
\node[draw, shape=circle, inner sep=4.5pt] (DD1) at (11,0){};
\draw[shift={(0, 0.4)}] (11,0) node{\tiny $1$};

\draw (LL1.10)--(LL2.170);
\draw (LL1.-10)--(LL2.190);
\draw (LL2) --(Dott);
\draw (Dott) --(LL3);
\draw (LL3) --(LL4);
\draw (LL4) --(UU1);
\draw (LL4) --(DD1);

\node[draw, shape=circle, inner sep=4.5pt] (A1) at (0,-1){};
\draw[shift={(0,0.4)}] (0,-1) node{\tiny $4$};
\node[draw, shape=circle, inner sep=1.8pt] (A2) at (1,-1){$*$};
\draw[shift={(0,0.4)}] (1,-1) node{\tiny $2$};
\draw node (ADot) at (2,-1){$\cdots$};
\node[draw, shape=circle, inner sep=1.8pt] (A3) at (3,-1){$*$};
\draw[shift={(0,0.4)}] (3,-1) node{\tiny $2$};
\node[draw, shape=circle, inner sep=1.8pt] (A4) at (4,-1){$*$};
\draw[shift={(0,0.4)}] (4,-1) node{\tiny $1$};

\draw (A1) --node[auto=left]{\tiny $\langle 4 \rangle$} (A2);
\draw (A2.10)--(ADot.175);
\draw (A2.-10)--(ADot.185);
\draw (ADot.5)--(A3.170);
\draw (ADot.-5)--(A3.190);
\draw (A3.10)--(A4.170);
\draw (A3.-10)--(A4.190);

\node[draw, shape=circle, inner sep=4.5pt] (B1) at (6,-1){};
\draw[shift={(0,0.4)}] (6, -1) node{\tiny $2$};
\node[draw, shape=circle, inner sep=1.8pt] (B2) at (7,-1){$*$};
\draw[shift={(0,0.4)}] (7,-1) node{\tiny $1$};
\draw node (BDot) at (8,-1){$\cdots$};
\node[draw, shape=circle, inner sep=1.8pt] (B3) at (9,-1){$*$};
\draw[shift={(0,0.4)}] (9,-1) node{\tiny $1$};
\node[draw, shape=circle, inner sep=4.5pt] (B4) at (10,-1){};
\draw[shift={(0,0.4)}] (10,-1) node{\tiny $2$};

\draw (B1.10)--(B2.170);
\draw (B1.-10)--(B2.190);
\draw (B2)--(BDot);
\draw (B3)--(BDot);
\draw (B3.10)--(B4.170);
\draw (B3.-10)--(B4.190);

\node[draw, shape=circle, inner sep=1.8pt] (A1) at (0,-2){$*$};
\draw[shift={(0,0.4)}] (0,-2) node{\tiny $1$};
\node[draw, shape=circle, inner sep=4.5pt] (A2) at (1,-2){};
\draw[shift={(0,0.4)}] (1,-2) node{\tiny $4$};

\draw (A1) --node[auto=left]{\tiny $\langle 4 \rangle$} (A2);

\node[draw, shape=circle, inner sep=4.5pt] (B1) at (6,-2){};
\draw[shift={(0,0.4)}] (6,-2) node{\tiny $1$};
\node[draw, shape=circle, inner sep=1.8pt] (B2) at (7,-2){$*$};
\draw[shift={(0,0.4)}] (7,-2) node{\tiny $1$};
\node[draw, shape=circle, inner sep=4.5pt] (B3) at (8,-2){};
\draw[shift={(0,0.4)}] (8,-2) node{\tiny $3$};

\draw (B1)--(B2);
\draw (B2)--(B3);
\draw (B2.10)--(B3.170);
\draw (B2.-10)--(B3.190);

\end{tikzpicture}

\caption{Twisted $\tilde{D}_{n}$}
\end{subfigure}
\caption{List of twisted dual graphs of rational numerical log canonical singularities}
\label{classification lc rational2}
\end{figure}
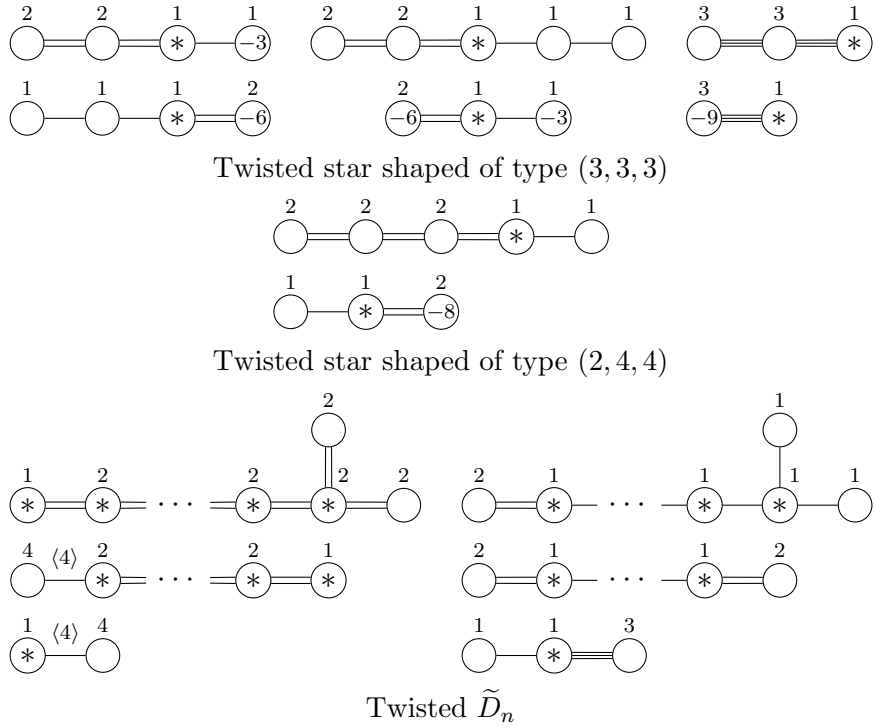


\begin{figure}[H]
\captionsetup[subfigure]{labelformat=empty}
\centering

\begin{subfigure}[b]{0.3\textwidth}
\centering
\begin{tikzpicture}
\node[draw, shape=circle, inner sep=1.8pt] at (0,0){{$*$}};
\draw[shift={(0,0.4)}] (0,0) node{\tiny $1$};
\draw[shift={(0,-0.4)}] (0,0) node{\tiny $1$};
\end{tikzpicture}
\caption{Simple elliptic}
\label{Simple elliptic}
\end{subfigure} 
\quad
\begin{subfigure}[b]{0.6\textwidth}
\centering
\begin{tikzpicture}
\node[draw, shape=circle, inner sep=1.8pt] (A) at (0,0){$*$};
\draw[shift={(0,0.4)}] (0,0) node{\tiny $r$};
\node[draw, shape=circle, inner sep=1.8pt] (B) at (2,0){$*$};
\draw[shift={(0,0.4)}] (2,0) node{\tiny $r$};
\draw (A) --node[auto=left]{\tiny $\langle 2r \rangle$} (B);
\end{tikzpicture}
\caption{Cusp with parameter $r \ge 1$ and length $=2$}
\label{Cusp2}
\end{subfigure} 

\vspace{0.5cm}
\begin{subfigure}[b]{1\textwidth}
\centering
\begin{tikzpicture}
\node[draw, shape=circle, inner sep=1.8pt] (X) at (-1.5,0){$*$};
\draw[shift={(0,0.4)}] (-1.5,0) node{\tiny $r$};
\node[draw, shape=circle, inner sep=1.8pt] (A) at (0,0){$*$};
\draw[shift={(0,0.4)}] (0,0) node{\tiny $2r$};
\node[draw, shape=circle, inner sep=1.8pt] (B) at (1.5,0){$*$};
\draw[shift={(0,0.4)}] (1.5,0) node{\tiny $2r$};
\draw node (C) at (3,0){$\cdots$};
\node[draw, shape=circle, inner sep=1.8pt] (D) at (4.5,0){$*$};
\draw[shift={(0,0.4)}] (4.5,0) node{\tiny $2r$};
\node[draw, shape=circle, inner sep=1.8pt] (E) at (6,0){$*$};
\draw[shift={(0,0.4)}] (6,0) node{\tiny $r$};

\draw (X) --node[auto=left]{\tiny $\langle 2r \rangle$} (A);
\draw (A) --node[auto=left]{\tiny $\langle 2r \rangle$} (B);
\draw (B) --node[auto=left]{\tiny $\langle 2r \rangle$} (C);
\draw (C) --node[auto=left]{\tiny $\langle 2r \rangle$} (D);
\draw (D) --node[auto=left]{\tiny $\langle 2r \rangle$} (E);
\end{tikzpicture},
\caption{Twisted cusp with parameter $r \ge 1$}
\label{twisted cusp}
\end{subfigure} 

\begin{subfigure}[b]{1\textwidth}
\centering
\begin{tikzpicture}
\node[draw, shape=circle, inner sep=1.8pt] (A) at (0,0){$*$};
\draw[shift={(0,0.4)}] (0,0) node{\tiny $r$};
\node[draw, shape=circle, inner sep=1.8pt] (B) at (1.2,1.2){$*$};
\draw[shift={(0,0.4)}] (1.2,1.2) node{\tiny $r$};
\draw node (C) at (3,1.2){$\cdots$};
\node[draw, shape=circle, inner sep=1.8pt] (D) at (4.8 ,1.2){$*$};
\draw[shift={(0,0.4)}] (4.8,1.2) node{\tiny $r$};
\node[draw, shape=circle, inner sep=1.8pt] (E) at (6,0){$*$};
\draw[shift={(0,0.4)}] (6,0) node{\tiny $r$};
\node[draw, shape=circle, inner sep=1.8pt] (F) at (4.8,-1.2){$*$};
\draw[shift={(0,0.4)}] (4.8,-1.2) node{\tiny $r$};
\draw node (G) at (3,-1.2){$\cdots$};
\node[draw, shape=circle, inner sep=1.8pt] (H) at (1.2,-1.2){$*$};
\draw[shift={(0,0.4)}] (1.2,-1.2) node{\tiny $r$};

\draw (A) --node[auto=left]{\tiny $\langle r \rangle$} (B);
\draw (B) --node[auto=left]{\tiny $\langle r \rangle$} (C);
\draw (C) --node[auto=left]{\tiny $\langle r \rangle$} (D);
\draw (D) --node[auto=left]{\tiny $\langle r \rangle$} (E);
\draw (A) --node[auto=left]{\tiny $\langle r \rangle$} (H);
\draw (H) --node[auto=left]{\tiny $\langle r \rangle$} (G);
\draw (G) --node[auto=left]{\tiny $\langle r \rangle$} (F);
\draw (F) --node[auto=left]{\tiny $\langle r \rangle$} (E);
\end{tikzpicture}
\caption{Cusp with parameter $r \ge 1$ and length $ \ge 3$}
\label{cusp}
\end{subfigure}
\caption{List of dual graphs of non-rational numerical log canonical singularities}
\label{classification lc non rational}
\end{figure}
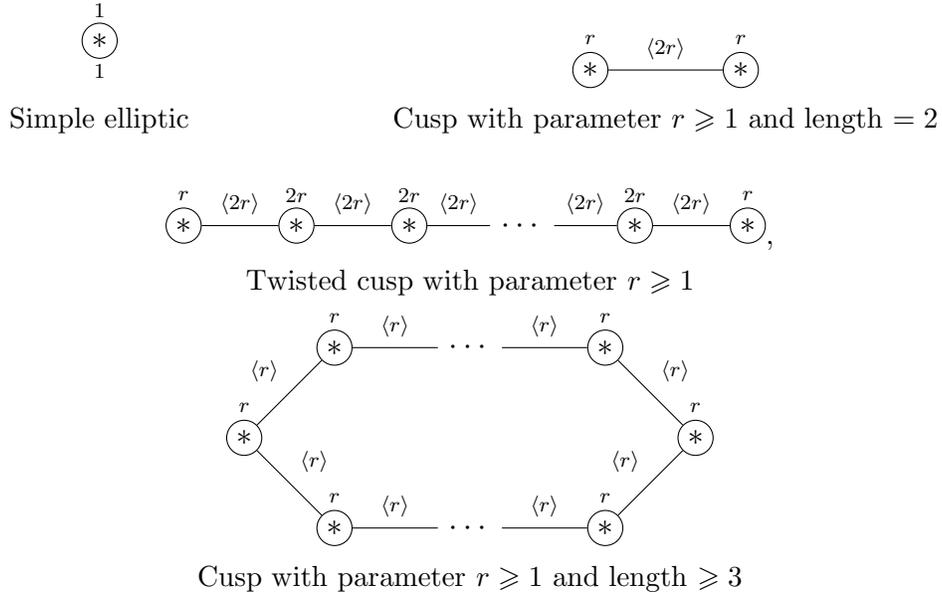


\end{document}